\documentclass[reqno, 12pt, reqno]{amsart}

\usepackage{amsfonts, amsthm, amsmath, amssymb}
\usepackage{hyperref}
\hypersetup{colorlinks=false}
\usepackage{comment}

\usepackage[margin=3.2cm]{geometry}

\RequirePackage{mathrsfs}
\let\mathcal\mathscr

\numberwithin{equation}{section}

\newtheorem{theorem}{Theorem}[section]

\newtheorem{lemma}[theorem]{Lemma}

\newtheorem{corollary}[theorem]{Corollary}
\newtheorem{proposition}[theorem]{Proposition}

\theoremstyle{definition}
\newtheorem{definition}[theorem]{Definition}

\newtheorem{remark}[theorem]{Remark}
\newtheorem{remarks}[theorem]{Remarks}

\newtheorem*{remark*}{Remark}
\newtheorem*{remarks*}{Remarks}

\newcommand{\op}[1]{\operatorname{#1}}
\renewcommand{\l}{\left}
\renewcommand{\r}{\right}
\renewcommand{\ss}{\substack}
\newcommand{\f}{\frac}
\newcommand{\ra}{\rightarrow}
\newcommand{\eps}{\varepsilon}

\newcommand{\BA}{{\mathbb {A}}}

\newcommand{\BC}{{\mathbb {C}}}

\newcommand{\BF}{{\mathbb {F}}}
\newcommand{\BG}{{\mathbb {G}}}

\newcommand{\BN}{{\mathbb {N}}}

\newcommand{\BP}{{\mathbb {P}}}
\newcommand{\BQ}{{\mathbb {Q}}}
\newcommand{\BR}{{\mathbb {R}}}

\newcommand{\BZ}{{\mathbb {Z}}}

\newcommand{\CD}{{\mathcal {D}}}
\newcommand{\CE}{{\mathcal {E}}}
\newcommand{\CF}{{\mathcal {F}}}
\newcommand{\CG}{{\mathcal {G}}}
\newcommand{\CH}{{\mathcal {H}}}
\newcommand{\CI}{{\mathcal {I}}}
\newcommand{\CJ}{{\mathcal {J}}}

\newcommand{\CM}{{\mathcal {M}}}
\newcommand{\CN}{{\mathcal {N}}}
\newcommand{\CO}{{\mathcal {O}}}

\newcommand{\CS}{{\mathcal {S}}}
\newcommand{\CT}{{\mathcal {T}}}
\newcommand{\CU}{{\mathcal {U}}}
\newcommand{\CV}{{\mathcal {V}}}
\newcommand{\CW}{{\mathcal {W}}}

\newcommand{\bb}{{\mathbf{b}}}
\newcommand{\cc}{{\mathbf{c}}}
\newcommand{\dd}{{\mathbf{d}}}

\DeclareMathOperator{\bfx}{\mathbf{x}}

\DeclareMathOperator{\bft}{\mathbf{t}}
\DeclareMathOperator{\lcm}{lcm}

\renewcommand{\phi}{\varphi}
\renewcommand{\rho}{\varrho}
\renewcommand{\epsilon}{\varepsilon}

\renewcommand{\leq}{\leqslant}

\renewcommand{\geq}{\geqslant}

\newcommand{\uu}{\mathbf{u}}

\newcommand{\bx}{\boldsymbol{x}}
\newcommand{\by}{\boldsymbol{y}}
\newcommand{\bz}{\boldsymbol{z}}
\newcommand{\bt}{\boldsymbol{t}}
\newcommand{\bs}{\boldsymbol{s}}

\newcommand{\bc}{\boldsymbol{c}}
\newcommand{\bd}{\boldsymbol{d}}
\newcommand{\be}{\boldsymbol{e}}
\newcommand{\bu}{\boldsymbol{u}}
\newcommand{\bw}{\boldsymbol{w}}
\newcommand{\bv}{\boldsymbol{v}}

\newcommand{\blambda}{\boldsymbol{\lambda}}
\newcommand{\bLambda}{\boldsymbol{\Lambda}}
\newcommand{\bGamma}{\boldsymbol{\Gamma}}
\newcommand{\bsigma}{\boldsymbol{\sigma}}
\newcommand{\bnu}{\boldsymbol{\nu}}
\newcommand{\balpha}{\boldsymbol{\alpha}}
\newcommand{\bxi}{\boldsymbol{\xi}}

\newcommand{\e}{\textup{e}}
\newcommand{\aess}{\alpha_{\operatorname{ess}}}

\setcounter{tocdepth}{1} 
\title{Optimal quantitative weak approximation for projective quadrics}
\author{Zhizhong Huang \and Damaris Schindler \and Alec Shute}
\date{\today}

\address{Institute of Mathematics, Academy of Mathematics and Systems Science, Chinese Academy of Sciences, Beijing, 100190, China}
\email{zhizhong.huang@yahoo.com}

\address{Mathematisches Institut, Georg-August-Universit\"{a}t G\"{o}ttingen, Bunsenstrasse 3--5,  37073, G\"{o}ttingen, Germany}
\email{damaris.schindler@mathematik.uni-goettingen.de}

\address{School of Mathematics, University of Bristol, Woodland Road, Bristol, UK}
\email{alec.shute@bristol.ac.uk}

\begin{document}
	\begin{abstract}
		We derive asymptotic formulas for the number of rational points on a smooth projective quadratic hypersurface of dimension at least three inside of a shrinking adelic open neighbourhood. This is a quantitative version of weak approximation for quadrics and allows us to deduce the best growth rate of the size of such an adelic neighbourhood for which equidistribution is preserved.
	\end{abstract}

\maketitle
\tableofcontents

	\section{Introduction}

The main goal of this article is to study the quantitative distribution of rational points on quadrics over the rational numbers. More precisely, for $n\geq 2$ we consider a quadratic form $F(x_0,\ldots, x_n)\in \mathbb{Z}[x_0,\ldots, x_n]$ in $n+1$ variables over the integers and ask for arithmetic properties of the variety $V\subseteq \mathbb{P}_{\mathbb{Q}}^n$ given by $F=0$. From now on we assume that the quadratic form $F$ has full rank, i.e. the variety $V$ is non-singular. It is well known by the celebrated Hasse-Minkowski theorem that $V$ satisfies the Hasse principle and weak approximation.\par
Weak approximation for $V\subseteq \mathbb{P}_{\mathbb{Q}}^n$ implies in particular that the set of rational points is infinite, if there are adelic points. In order to study the density of rational points on $V$, we may introduce a height function and count rational points on $V(\mathbb{Q})$ of height bounded by $B$ where $B$ tends to infinity. The behavior of this counting function is well studied and the asymptotic behavior is compatible with Manin's conjecture for rational points on Fano varieties. The goal of this article is to study a quantitative form of weak approximation for quadrics. More precisely we are interested in counting rational points on $V$ in a shrinking open neighborhood of the adelic points $V(\mathbf{A}_\BQ)$, and investigate how quickly the neighborhood can shrink when considering points of height bounded by $B$ with $B$ tending to infinity. With this our result fits into the general framework of Manin's conjecture on counting rational points of bounded height on Fano varieties, equidistribution questions for rational points on varieties and in particular questions on quantitative weak approximation.\par
More generally, let $X$ be a smooth projective variety over $\BQ$. The density of the set of rational points $X(\BQ)$ is one of the central themes of Diophantine geometry. To exclude the trivial case we shall assume $X$ is everywhere locally soluble, i.e., the \emph{adelic space} $X(\mathbf{A}_\BQ):=X(\BR)\times \prod_{p<\infty} X(\BQ_p)$ is non-empty. We say that $X$ satisfies \emph{weak approximation} if $X(\BQ)$ is dense in $X(\mathbf{A}_\BQ)$. See \cite{C-T} for a general conjecture of Colliot-Thélène concerning rationally connected varieties.
Weak approximation for $X$ implies in particular that $X(\BQ)$ is dense in the Zariski topology.
 An effective way of measuring this property is to attach $H:X(\BQ)\to \BR_{>0}$ a height function and to study asymptotic growth of points of bounded height. In the setting of Fano varieties, Manin's conjecture \cite{Bat-Man,FMT} provides a broad framework for the asymptotic behavior of points of bounded height outside of an exceptional subset. Refining Manin's conjecture, a general equidistribution principle proposed by Peyre \cite{Peyre} predicts that, if $X$ is Fano and $X$ satisfies weak approximation, upon removing some exceptional set $\CM\subseteq X(\BQ)$, the set $\{P\in X(\BQ)\setminus\CM:H(P)\leqslant B\}$, as $B\to\infty$, becomes equidistributed in $X(\mathbf{A}_\BQ)$. The question which exceptional set of rational points one has to remove in Manin's conjecture has received much attention during the last years \cite{Pey03,LST thin sets}.

 The Hardy-Littlewood circle method has proved to be a particularly flexible and useful tool to study both Manin's conjecture as well as Peyre's predicition on equidistribution for rational points. This has for example been executed for smooth hypersurfaces and more generally smooth complete intersections in projective space of sufficiently large dimension compared to the degree in work of Birch \cite{Birch} and Browning and Heath-Brown \cite{BHBdiffering}, as well as for certain hypersurfaces in toric varieties \cite{Schindler, Mignot, mignot2, PieropanSchindler}. It is an ongoing challenge to establish results of this type assuming only a smaller number of variables, for some improvements in this direction see for example work of Browning and Prendiville \cite{Browning-Prendiville} and work of Myerson \cite{Myerson1,Myerson2}.\par

One can ask similar questions to the above for affine varieties and integral points on affine varieties. For work concerning integral points on affine varieties, see for example \cite{Borovoi-Rudnick}  for symmetric varieties and \cite{EMV} for Linnik-type problems. In another direction Sardari \cite{Sardari} studied quantitative strong approximation questions for affine quadrics. Sardari's result \cite[Theorem 1.2]{Sardari} concerns Linnik-type problems about the family of affine quadrics $(F=N)\subseteq \BA^{n+1}$ indexed by a growing integer parameter $N$, where the condition at the real place is imposed via projecting onto the unit affine quadric defined by $F=1$. Our goal in this article is to study quantitative approximation questions for projective quadrics, where the condition at the real place is placed in the spirit of weak approximation at the real place of the single fixed projective quadric $(F=0)\subseteq\BP^{n}$. Similarly, Sardari and us handle these questions for quantitative strong and weak approximation conditions at non-archimedean places, but for simplicity of exposition we restrict for a moment to the real place.\par

Our general strategy for our main result is to first pass to the affine cone and apply the circle method to count integral points of height bounded by say $B$ on the affine cone, which are at the same time restricted to a neighborhood of radius $R^{-1}$ around a given real point. We consider $B$ and $R$ as two real parameters which both tend to infinity and for which $R< B^{\frac{1}{2}-\epsilon}$. In passing back from the affine cone to the projective setting, we use the standard procedure of Moebius inversion. However, this poses major new challenges in our situation. In order to apply Moebius inversion, we also need to understand the number of integral points on the affine cone of height bounded by $B'$ where $B'$ is any real number less than $B$ and where the points are still restricted to neighborhoods of size $R^{-1}$ for the same parameter $R$ which we started with, i.e. we cannot assume that $R< B'^{\frac{1}{2}-\epsilon}$ for these choices of heights $B'$. To solve this problem, we consider three ranges of parameters of heights $B'$. In the first range where $B'$ is relatively large, we apply the circle method to find actual asymptotics for our counting function. In the range where $B'$ is still somewhat large but significantly smaller than $B$ we obtain upper bounds for the corresponding counting function again using the delta method. In the last range for $B'$ relatively small, we complement our earlier results with lattice point counting techniques for upper bounds.\par

Our results on quantitative weak approximation  have applications in the setting of Diophantine approximation on algebraic varieties. In particular our results imply upper bounds for the \emph{approximation constants} as introduced by work of McKinnon--M. Roth \cite{M-R} for projective quadrics. We discuss these applications and connections in the appendix of this article.\par

	\subsection{Main counting result for the projective quadric}
	We now come to set up the terminology to state our results precisely. For this let $n\geq 3$ and $F\in\BZ[X_0,\ldots,X_n]$ be a non-degenerate quadratic form in $n+1$ variables which defines a smooth projective hypersurface $V=(F=0)\subseteq\BP_\BQ^n$ over $\BQ$, with the integral model $\CV\subseteq\BP^n_{\BZ}$ also defined by $F$.
	
	Let us first prescribe several local conditions in more detail which allow the size of adelic sets to shrink.
	We start by the one for the real place. 	Let us fix a point $\bxi \in V(\BR)$, and fix $g_{\bxi}:V(\BR)\dashrightarrow\BR^{n-1}$ a local diffeomorphism which sends $\bxi$ to $\boldsymbol{0}$. We also fix an $(n-1)$-dimensional bounded real neighbourhood of $\boldsymbol{0}$ in $\BR^{n-1}$, denoted by $U_0$. For a real parameter $R\geq 1$, the family 
	\begin{equation}\label{real zoom condition in x}
		\left(\CU(R,\bxi):=g_{\bxi}^{-1}(R^{-1}U_0)\right)_{R\geqslant 1}
	\end{equation} consisting of real neighbourhoods ``of diameter $R^{-1}$'' forms a topological basis for the real point $\bxi$ inside of $V(\BR)$. Consequently, as $R\to\infty$ the \textit{real zoom condition}  $\CU(R,\bxi)\cap V(\BQ)$ captures points of $V(\BQ)$ approximating $\bxi$ in the real locus.

	We next describe zoom conditions for the non-archimedean places. 
	Let $L>0$ be an integer. For every prime $p\mid L$, we let $\bLambda_p \in\CV(\BZ_p)$ and $\bLambda:=(\bLambda_p)_{p\mid L}$. Write $m_p:=\operatorname{ord}_p(L)$ and define
	$$\CD_p(L,\bLambda_p):=\{x_p\in\CV(\BZ_p): x_p\equiv \bLambda_p\bmod p^{m_p}\}.$$
	If $m_p\to\infty$ then points in $V(\BQ)\cap \CD_p(L,\bLambda_p)$ $p$-adically approximate $\bLambda_p$, and so are said to satisfy a \emph{$p$-adic zoom condition}.
	 The set $$\CD(L,\bLambda):=\prod_{p\mid L} \CD_p(L,\bLambda_p)\times\prod_{p\nmid L} V(\BQ_p)\subseteq \prod_{p<\infty} V(\BQ_p)$$ is a non-empty finite adelic open neighbourhood of $V$. 

 The set $\CU(R,\bxi)\times \CD(L,\bLambda)\subseteq V(\mathbf{A}_\BQ)$ is a non-empty open adelic set. We fix a height function $H_{\bxi}:\BP^n(\BQ)\to\BR_{>0}$ relative to the line bundle $\CO(1)$ which depends on the real point $\bxi$.
We want to estimate the number of rational points of $V$ of bounded height that ``lie in $U_0$ after scaling by $R$'' and simultaneously ``specialise to $\boldsymbol{\Lambda}$ modulo $L$''.  So we consider the counting function
\begin{equation}\label{eq:CNV}
		\CN_{V}((R,\bxi),(L,\bLambda);B):=\#\{P\in V(\BQ)\cap (\CU(R,\bxi)\times \CD(L,\bLambda)):H_{\bxi}(P)\leqslant B\}.
\end{equation}
	\begin{theorem}\label{thm:countingV} 

Assume $n\geqslant 4$ and $V(\mathbf{A}_\BQ)\neq\varnothing$. Let $H_{\bxi},g_{\bxi},U_0$ be defined by \eqref{eq:heightbxi} \eqref{eq:gxi} \eqref{eq:U0} respectively in Section \ref{se:realplace}. Assume there exists $0<\tau<1$ such that as $B\to\infty$, \begin{equation}\label{eq:tau}
			(LR)^2=O(B^{1-\tau}),
		\end{equation}
  and assume that $R\geq R_0$, where $R_0$ is defined in \eqref{sizeR0}. Then 
		$$\CN_{V}((R,\bxi),(L,\bLambda);B)=\frac{B^{n-1}}{n-1}\omega^V_{\bxi}\left(\CU(R,\bxi)\times \CD(L,\bLambda)\right)\left(1+ O_{\eps,\tau}(B^\varepsilon E_\tau(B))\right).
 $$
 
  Here $\omega^V_{\bxi}$ is the Tamagawa measure (à la Peyre \cite[Définition 2.2]{Peyre}) on $V(\mathbf{A}_\BQ)$ corresponding the height function $H_{\bxi}$, and 
\begin{equation}\label{EB}
			E_\tau(B) =
			\begin{cases}
				B^{\f{-(n-3)\tau}{5n-7}}, &\textrm{ if }n\geq 5;\\
				B^{-2\tau/17}, &\textrm{ if }n=4. 
			\end{cases}
		\end{equation} 
The implied constant depends at most on $\bxi$ (apart from $\varepsilon,\tau$) and is otherwise uniform with respect to $R,L$ and $\bLambda$.
	\end{theorem}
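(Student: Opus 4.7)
My plan is to reduce the projective count to a count of primitive integer vectors on the affine cone $C(V):=\{F=0\}\subset\BA^{n+1}$, handle the latter by a smoothed circle method (the $\delta$-symbol method of Duke--Friedlander--Iwaniec and Heath-Brown), and remove primitivity at the end by Möbius inversion. A rational point of $V$ lifts to a primitive vector $\mathbf{x}\in\BZ^{n+1}_{\textup{prim}}$ (up to sign) with $F(\mathbf{x})=0$; the three constraints in $\CN_V$ translate, respectively, into a smooth size restriction $\|\mathbf{x}\|\asymp B$, a narrow angular window of half-angle $\asymp R^{-1}$ around $\bxi$, and congruences $\mathbf{x}\equiv \bLambda\pmod L$. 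Packaging these into a single smooth weight $w=w_{B,R,L,\bLambda}$, the starting quantity is the smoothly weighted count $N(B,R,L,\bLambda)=\sum_{\mathbf{x}\in \BZ^{n+1}} w(\mathbf{x})\,\mathbf{1}_{F(\mathbf{x})=0}$.

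\textbf{Circle method input.} After inserting the $\delta$-expansion of $\mathbf{1}_{F(\mathbf{x})=0}$ and applying Poisson summation in $\mathbf{x}$, the count becomes a sum of products $q^{-(n+1)}S_q(\mathbf{c})I_q(\mathbf{c})$ over dual frequencies $\mathbf{c}\in\BZ^{n+1}$ and moduli $q$, the congruence modulo $L$ appearing as an additive-character twist of the complete sum $S_q(\mathbf{c})$. The $\mathbf{c}=\boldsymbol 0$ term should produce the predicted main term: the singular series reproduces the finite-place Tamagawa densities (restricted at $p\mid L$ to the prescribed residues $\bLambda_p$), and the singular integral reproduces the archimedean factor of $\omega^V_\bxi$ on $\CU(R,\bxi)$. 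For $\mathbf{c}\neq \boldsymbol 0$ one combines the Gauss-sum bound $|S_q(\mathbf{c})|\ll q^{(n+2)/2}(q,\cdot)^{1/2}$, which holds because $F$ is non-singular, with stationary-phase decay of $I_q(\mathbf{c})$ inherited from the window of width $R^{-1}$. The hypothesis $(LR)^2=O(B^{1-\tau})$ is calibrated exactly to keep these minor-arc contributions strictly below the main term of order $B^{n-1}/(L^n R^{n-1})$, while $R\geqslant R_0$ ensures the chart $g_\bxi^{-1}$ is a diffeomorphism on the window so that the Jacobian factors in the singular integral match Peyre's measure cleanly.

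\textbf{Möbius inversion and auxiliary ranges.} To strip primitivity I would write $\mathbf{1}_{\gcd(\mathbf{x})=1}=\sum_{d\mid \mathbf{x}}\mu(d)$, which turns the count into $\sum_d \mu(d)\, N(B/d,R,L_d,\bLambda_d)$ for appropriately induced congruence data $(L_d,\bLambda_d)$. The analysis then splits into three regimes for $d$. For small $d$, namely those with $(LR)^2\ll (B/d)^{1-\tau}$, the above circle method still yields an asymptotic and these terms assemble into the Tamagawa main term via the usual $\sum_d \mu(d) d^{-(n-1)}$ absorbed into the $p$-adic local densities. For intermediate $d$ the same machinery delivers only an upper bound of size $O((B/d)^{n-1+\epsilon})$; summed against $\sum d^{-(n-1)}$ this is swallowed by the stated error term provided $n-1\geqslant 3$, which is precisely why $n\geqslant 4$ is needed. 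For the largest $d$, where $B/d$ drops below the scale $R$, the dilated lattice $d\BZ^{n+1}$ intersected with the narrow angular window and the quadric cuts out a lower-rank lattice piece in a short box, which I would bound by Davenport--Lipschitz type lattice-point counting. Optimally balancing the transition thresholds between these three regimes is what produces the exponents in $E_\tau(B)$.

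\textbf{Main obstacle.} The crux is joint uniformity in $B$, $R$ and $L$: the narrow real window injects genuine oscillation into $I_q(\mathbf{c})$ that must be exploited all the way out to dual frequencies of size $\asymp LR\cdot B^{-1/2}$, while the $L$-twist of $S_q(\mathbf{c})$ must be controlled over moduli $q$ with $\gcd(q,L)$ arbitrary and not merely coprime to $L$. The dimension $n=4$ is the pinch point: there one has just barely enough variables in the square-root cancellation of $S_q(\mathbf{c})$ to beat the borderline divergence of $\sum d^{-3}$ in the Möbius step, and this is exactly what forces the worse exponent $-2\tau/17$ in $E_\tau(B)$ compared with the cleaner $-(n-3)\tau/(5n-7)$ available when $n\geqslant 5$.
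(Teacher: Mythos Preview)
Your proposal is essentially the paper's approach: the $\delta$-method on the affine cone with a weight adapted to the narrow real window and the congruence, the $\bc=\boldsymbol 0$ term giving the main term and $\bc\neq\boldsymbol 0$ the error, then M\"obius inversion split into three ranges of $d$ handled respectively by the asymptotic, an upper bound from the $\delta$-method run with negative $\tau$, and lattice-point counting. One structural detail you do not mention is that the preimage of the $p$-adic neighbourhood $\CD(L,\bLambda)$ under the torsor map $\pi$ decomposes as a disjoint union over $\gamma\in(\BZ/L\BZ)^\times$ of congruence classes on the cone, so the passage from affine to projective involves both the M\"obius sum over $d$ coprime to $L$ and a sum over these units; this is what converts the affine density $L^{-n}$ into the projective one $L^{-(n-1)}\prod_{p\mid L}(1-p^{-1})$ in the Tamagawa measure.
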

	
	Theorem \ref{thm:countingV}  implies a quantitative version of the classical Hasse--Minkowski theorem on weak approximation for projective quadrics, allowing the size of the adelic open set (i.e. the parameters $R,L$) to ``shrink'' in terms of $B$.  Indeed (cf. Proposition \ref{prop:growthofcirsigma}), $$L^{-\dim V-\varepsilon} R^{-\dim V}\ll_{\varepsilon}\omega^V_{\bxi}\left(\CU(R,\bxi)\times \CD(L,\bLambda)\right)\ll (LR)^{-\dim V},$$ and Theorem \ref{thm:countingV} furnishes an error term with a power-saving, subject to the condition \eqref{eq:tau} on the growth of $R,L$. 
 
  The main term in Theorem \ref{thm:countingV} can be equally be expressed as a product of local densities, whose form is well-known and can be achieved in different ways (see e.g. \cite[\S5]{Peyre}). However, the key characteristic of Theorem \ref{thm:countingV} is that the condition \eqref{eq:tau} is \emph{optimal}, in the sense that such an equidistribution can fail if  the growth rate of $R$ (resp. $L$) exceeds a power of $B$ strictly greater than $\frac{1}{2}$, without further assumption on the Diophantine properties of $\bxi$ (resp. $\bLambda$). We refer to the Appendix for a short discussion in view of Diophantine approximation, notably Remarks \ref{rmk:dioapp} (1).

	\subsection{Main counting result for the affine cone}\label{se:settingandmaincounting}	
	Let $W\subseteq \BA^{n+1}$ be the affine cone of $V$ with the integral model $\CW\subseteq \BA^{n+1}_{\BZ}$ defined by $F$. 
   Let $W^o\subseteq \BA^{n+1}\setminus\boldsymbol{0}$ be the punctured affine cone over $V$. We take the integral model $\CW^o:=\CW\setminus \overline{\boldsymbol{0}}$ (where $\overline{\boldsymbol{0}}$ stands for the Zariski closure of $\boldsymbol{0}$ in $\BA^{n+1}_{\BZ}$). Let $\pi:\BA^{n+1}\setminus\boldsymbol{0}\to \BP^n$ be the canonical projection map.
   
 Let $\bGamma\in(\BZ/L\BZ)^{n+1}$ be such that $F(\bGamma)\equiv 0\bmod L$. We can view $\bGamma$ as an element of $\CW(\BZ/L\BZ)$. Let   $\|\cdot\|_{\bxi}:\BR^{n+1}\to\BR_{\geqslant 0}$ be the norm in accordance with $H_{\bxi}$. 
 The counting function \eqref{eq:CNV} is closely related to
	\begin{multline}\label{eq:CNW}
		\CN_{\CW}((R,\bxi),(L,\bGamma);B) \\:= \#\l\{\bx \in \BZ^{n+1}\setminus\boldsymbol{0}: F(\bx) = 0, \bx \equiv \bGamma \bmod{L}, \pi(\bx)\in \CU(R,\bxi), \|\bx\|_{\bxi} \leq B \r\}.
	\end{multline}
	We now state our main counting result about \eqref{eq:CNW}. It features a \textit{singular series} \begin{equation}\label{eq:singserW}
		\mathfrak{S}(\CW;L,\bGamma):=\prod_p\sigma_{p}(\CW;L,\bGamma)
	\end{equation} and a  \textit{singular integral} $\CI_R$, which are given by the equations (recall that $m_p:=\operatorname{ord}_p(L)$) 
	\begin{align}
		\sigma_{p}(\CW;L,\bGamma)&:=\lim_{k\to\infty}\frac{\#\{\bv\in \CW(\BZ/p^k\BZ):\bv\equiv\bGamma\bmod p^{m_p}\}}{p^{nk}},\label{eq:sigmapW}\\
		\CI_R &:= \int_{\theta \in \BR}\int_{\ss{\bx \in \BR^{n+1}: \|\bx\|_{\bxi}\leqslant 1\\ \pi(\bx)\in \CU(R, \bxi)}}\e(\theta F(\bx)) \op{d}\bx \op{d}\theta.\label{eq:singint}
	\end{align}
	
	\begin{theorem}\label{main counting result} 

Let $\|\cdot\|_{\bxi}$ be defined by \eqref{eq:normbxi}. Assume that $\bGamma\in\CW^o(\BZ/L\BZ)$ and that the same assumptions as in Theorem \ref{thm:countingV} hold. Then we have
		$$ \CN_{\CW}((R,\bxi),(L,\bGamma);B)= B^{n-1}\CI_R\mathfrak{S}(\CW;L,\bGamma)\l(1 + O_{\eps,\tau}(B^\varepsilon E_\tau(B))\r),$$
		where $E_\tau(B)$ is given in (\ref{EB}). Here again the implied constant may depend on $\bxi$ (apart from $\varepsilon,\tau$), but is otherwise uniform with respect to $R,L$ and $\bGamma$. 

  \end{theorem}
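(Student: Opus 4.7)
The plan is to apply the Hardy--Littlewood circle method in the form of the Heath--Brown delta symbol. First I reduce to a congruence-free count by writing $\bx = \bGamma + L\by$ with $\by \in \BZ^{n+1}$, so that the condition $\bx \equiv \bGamma \bmod L$ is absorbed and the defining equation becomes $F_L(\by) := F(\bGamma + L\by) = 0$, now constrained only by the rescaled height and zoom conditions. I replace the sharp indicator of the region $\{\|\bx\|_\bxi \le B,\ \pi(\bx) \in \CU(R,\bxi)\}$ by a smooth weight $w_B$ adapted to the anisotropic geometry of this region (length $\sim B$ along $\bxi$, width $\sim B/R$ transverse), controlling the boundary discrepancy against the expected main term of size $B^{n-1}\CI_R \mathfrak{S}$.

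Next I apply the delta symbol with parameter $Q \asymp B/L$ (rescaled so that the quadratic $F_L$ takes values of appropriate size) to detect $F_L(\by) = 0$, obtaining a decomposition
\[
\CN_{\CW}((R,\bxi),(L,\bGamma);B) \;=\; \frac{1}{Q^{2}} \sum_{q \le Q} \sum_{\substack{a \bmod q\\ (a,q)=1}} S_q(a)\, I_q(\theta)\ \text{integrated over } \theta,
\]
with $S_q(a)$ a Gauss-type exponential sum in $\by \bmod q$ tied to $F_L$, and $I_q(\theta)$ an oscillatory integral of $w_B$ twisted by $\e(\theta F(\bGamma + L\by)/q)$. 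The singular series $\mathfrak{S}(\CW;L,\bGamma)$ emerges from averaging $S_q$ over all $q$ (splitting $q$-sums into the part dividing powers of primes $p \mid L$ and the coprime part, and matching them against \eqref{eq:sigmapW}); the singular integral $\CI_R$ emerges from evaluating the $\theta$-integral of $I_1(\theta)$ in the bounded-$q$ regime and relating it by the substitution $\bx = \bGamma + L\by$ back to the integral \eqref{eq:singint}.

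The decisive technical work is the minor-arc/large-$q$ estimation. For $S_q$ I rely on square-root cancellation available from full rank of $F$ (classical Gauss sum bounds combined with Deligne-type or elementary estimates over prime power moduli), and for $I_q(\theta)$ I use repeated integration by parts in the directions transverse to $\bxi$, so that the small transverse width $B/R$ of $w_B$ genuinely \emph{helps} cancellation rather than hurting it, producing savings by powers of $(qR/B)$ or $(q|\theta|)$. Balancing these two inputs against the trivial bound governs the exponent of the error term, which is optimised by cutting the $q$-range differently when $n=4$ (where the singular series has only conditional convergence, forcing a worse split and the exponent $2\tau/17$) and when $n \ge 5$ (where absolute convergence allows a larger minor-arc range, giving $(n-3)\tau/(5n-7)$).

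The hard part is keeping every estimate uniform in $R$ and $L$ simultaneously, not just in $B$. In particular, the anisotropic geometry of $w_B$ must be tracked through integration-by-parts bounds for $I_q(\theta)$ so that the gain from the narrow zoom region is captured without being overwhelmed when $R$ grows like $B^{(1-\tau)/2}$; similarly, the $L$-uniformity requires that the exponential sums $S_q$ for $q$ with $\gcd(q,L)>1$ be analysed via the shifted form $F_L$ and matched against the local factors $\sigma_p(\CW;L,\bGamma)$ without loss. Once these uniformities are in place, assembling the main term from the bounded-$q$ contribution and the stated error from the complementary range yields the claimed asymptotic.
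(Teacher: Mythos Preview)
Your outline has the right overall shape (delta method after the shift $\bx=\bGamma+L\by$, Poisson summation, singular series/integral from the $\bc=\boldsymbol{0}$ term), but there are two concrete gaps that would prevent the argument from reaching the stated error term.

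First, the choice $Q\asymp B/L$ is not sharp enough. On the affine cone, the real zoom condition forces not just $\|\bx\|_{\bxi}\ll B$ but also $|t_1/t_0|\ll R^{-2}$, so that $F(\bx)\ll B^2/R^2$ rather than $B^2$. The paper exploits this by inserting an extra weight $w_2(R^2 F(\bx)/B^2)$ and taking $Q=B/(LR)$; this ``level-lowering'' is precisely what makes the range $(LR)^2=O(B^{1-\tau})$ attainable. With your $Q$ you lose a factor $R$ in the $q$-range and the uniformity in $R$ does not come out. Relatedly, the $L$-uniformity is not obtained from square-root cancellation alone: one needs the lattice constraint $S_{q,L,\blambda}(\bc)\neq 0\Rightarrow \bc\in\Gamma_{\blambda,L}$ (a sublattice of index $\asymp L^n$), which is what keeps the $\bc$-sum from losing powers of $L$.

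Second, the specific exponents $\tfrac{(n-3)\tau}{5n-7}$ and $\tfrac{2\tau}{17}$ do not arise from ``cutting the $q$-range differently''. In the paper they appear only \emph{after} the weighted delta-method result (Theorem~\ref{thm:countingW}) is proved with an auxiliary smoothing parameter $\eta$, via a separate removal-of-smooth-weights step: one sandwiches the sharp count between $\CN_{\CW}(w_{B,R}^{\pm};\cdot)$, controls the telescoping sum $\sum_{k\ge 0}\CN_{\CW}(w^{+}_{B\eta^k,R};\cdot)$ using Theorem~\ref{thm:countingW} also in the regime $-1<\tau_k\le 0$ (as an upper bound), together with a direct lattice-point bound for very small heights, and then optimises $\eta$ by solving $\eta=E_\tau(\eta;B)$. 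Your proposal folds this into a single ``boundary discrepancy'' estimate, which would not produce these exponents and, more seriously, would not handle the fact that after M\"obius inversion one must control counts at heights $B/d$ where the relation $R<(B/d)^{(1-\tau)/2}$ fails.
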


 The case of quaternary quadratic forms (i.e. $n=3$) is more subtle and is addressed in a separate article \cite{PART2}.

 \subsection{Comparisons with results in the literature and further comments}
 Similar forms of Theorems \ref{thm:countingV} \& \ref{main counting result} have previously been obtained by the work of Browning--Loughran in \cite[\S4]{Browning-Loughran} and by Kelmer--Yu in \cite[\S1.3]{Kelmer-Yu}. 
 
In \cite{Browning-Loughran} the slightly different problem of counting smoothly weighted solutions of $F$ whose reduction modulo $L$ lies in a given subset of local solutions is concerned (without prescribing any real zoom condition). In this setting, the expected order of growth is $B^{n-1}$ and $L^{-n}$ is roughly the ``volume'' of the affine adelic open set defined by the congruence condition $\bx \equiv \bGamma \bmod{L}$. Therefore, it can be checked that \cite[Theorem 4.1]{Browning-Loughran} produces an asymptotic formula provided that 
 $$ L^{\frac{n+1}{2}}B^{\frac{n+1}{2}+\eps}  = o(B^{n-1}L^{-n}),$$
 or in other words, $L = O(B^{\delta})$ for some $\delta <\frac{n-3}{3n+1}=\frac{1}{3}-\frac{10}{3(3n+1)}$. (It is also suggested therein that when $L$ is squarefree, their error term could be improved, with the result that $L$ could grow like $B^{\delta}$ for $\delta < \frac{1}{2}- \frac{3}{2n}$. This becomes optimal in the limit as $n\to \infty$.) Theorem \ref{main counting result} thus improves upon \cite[Theorem 4.1]{Browning-Loughran} in the $L$ aspect, since it allows $L$ to grow arbitrarily close to $B^{\frac{1}{2}}$ for any $n\geq 4$. As explained before, $B^\frac{1}{2}$ is the ``barrier'' for which such an equidistribution is preserved. Also here we do not require the assumption made in  \cite[Theorem 4.1]{Browning-Loughran} that $L$ is coprime to $2\Delta_F$.  

 In \cite{Kelmer-Yu} however, via a quite different approach and as an application of its main result, \cite[Theorem 1.7]{Kelmer-Yu} deals with the same counting problem for the quadric $V$ as ours, but restricted to quadratic forms of signature $(n,1)$, for which only real zoom condition is imposed (i.e. $L=1$). The optimal growth range for $R$ is achieved only for a sub-family of quadrics (see \cite[Remark 1.3]{Kelmer-Yu}). So Theorem \ref{thm:countingV} generalises and improves upon \cite[Theorem 1.7]{Kelmer-Yu} not only covering quadrics with any signture but also in the $R$-aspect.

We would like to comment on the optimality of Theorem \ref{thm:countingV}.
In the light of the Schmidt subspace theorem and its ultrametric generalisations, it is probable that, upon specifying finer Diophantine approximation properties of $\bxi$ or $\bLambda$, the asymptotic in Theorem \ref{thm:countingV} would still remain true under weaker constraints on the growth of $R$ or $L$. See also Remarks \ref{rmk:dioapp} (2).
However, achieving this seems to go beyond the technical limit developed in this paper (and also the circle method of Birch's type \cite{Birch}) -- we are unable to keep track of any intrinsic approximation properties of either $\bxi$ or $\bLambda$.

	Our approach is based upon a $\delta$-variant of the Hardy--Littlewood circle method which goes back to Duke--Friendlander--Iwaniec \cite{D-F-I} and is later extensively developed by Heath-Brown \cite{H-Bdelta}. adapting particularly well to quadratic forms. The execution of the $\delta$-method leads to a certain type of twisted quadratic exponential sums, similar to the globally defined $S_q(\bc)$ in \cite{H-Bdelta}.
 However, these exponential sums depend on the newly introduced non-archimedean local condition $(L,\bLambda)$. To obtain satisfactory control for them we work out certain refinements of \cite[\S9--\S11]{H-Bdelta}, and we need to take special care of uniformity, especially over the ``bad moduli''. \par
 
	An important feature in Sardari's work \cite{Sardari} is a careful construction of smooth weight functions to capture the Linnik-type real condition in the affine setting. 
	In the case of projective varieties, the quantitative weak approximation conditions lead to differently shaped regions on the affine cone and with this we need to build a different weight function encapsulating as many ``level-lowering'' effects as possible that these conditions result in, which takes an equally crucial role for us as it did in \cite{Sardari}. Furthermore, the application of the $\delta$-method requires smoothed counting functions and in order to deduce an asymptotic for our original counting function, we need to apply a removing of smooth weights procedure which introduces some additional technicalities which we also discuss in detail in this article.\par

	\subsection{Organisation of the article}
In \S\ref{se:setupdelta}, we explicitly construct smooth weight functions, with respect to which we state a ``weighted'' counting result (Theorem \ref{thm:countingW}). The remaining part of \S\ref{se:setupdelta} discusses how the $\delta$-method is executed. In \S\ref{se:oscint} and \S\ref{se:quadexpsum} we prove  estimates for the oscillatory integrals and the singular series respectively. Sections \S\ref{se:cneq0} and \S\ref{se:c=0} are devoted to the proof of Theorem \ref{thm:countingW}. In \S\ref{se:removeweight} we prove Theorem \ref{main counting result} using Theorem \ref{thm:countingW} with the help of a removing of smooth weights procedure, and in \S\ref{se:proofofmainthmprojquadric} we deduce Theorem \ref{thm:countingV} from  Theorem \ref{main counting result}. Section \ref{se:proofmainthm} also contains upper bounds for the counting function $\CN_{\CW}((R,\bxi),(L,\bGamma);B)$ in the range $-1<\tau<1$ using the delta method, as well as upper bounds for all ranges of $B,L,R$ using lattice point counting techniques. 
 In the Appendix we discuss several aspects from Diophantine approximation. On combining with our main counting result, we study various approximation constants for projective quadrics.

\subsection{Notation and conventions} Unless otherwise specified, all implied constants are allowed to depend on the quadratic form $F$ (and hence the quadric $V$), the real point $\bxi\in V(\BR)$ and the real neighbourhood $U_0$, which are considered fixed throughout. 

We write $\tau(n)$ for the divisor function.
We write $\Delta_F$ for the discriminant of the quadratic form $F$. For non-zero integers $k,A$, we use the notation $k \mid A^{\infty}$ to mean that every prime factor of $k$ is also a prime factor of $A$. 
The symbol $T\nearrow X$ within a sum means a dyadic decomposition, i.e. $T$ runs over all powers of $2^j$ with $2^j\leq X$.

	\section{Set-up for the $\delta$-method}\label{se:setupdelta}
	\subsection{Considerations for the real place}\label{se:realplace}
	Without loss of generality, we may assume the point $\bxi= [\xi_0: \cdots :\xi_n] $ satisfies $\xi_0 \neq 0$.
	We begin by applying a (classical) change of variables in order to get a ``nice'' coordinate system. 
	
	\begin{lemma}\label{change of variables}
		There exists a matrix $M \in \op{GL}_{n+1}(\BR)$, sending the variables $\bx$ to $\bt := M\bx$, such that 
		\begin{itemize}
			\item $M\bxi = [1:0\cdots :0]$;
			\item $F(\bx) = F(M^{-1}\bt) =t_0t_1 + F_2(t_2, \ldots, t_n)$, where $F_2$ is a non-degenerate quadratic form in $(n-1)$ variables.
		\end{itemize}
	\end{lemma}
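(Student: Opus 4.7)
The plan is to build $M$ as the composition of three successive linear changes of variables, each handling one of the three desired normalisations.

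First I would choose any $M_1\in\operatorname{GL}_{n+1}(\BR)$ with $M_1\bxi=[1:0:\cdots:0]$; this is possible because $\xi_0\neq 0$, so we can, for example, scale $\bxi$ to have first coordinate $1$ and complete to a basis of $\BR^{n+1}$, taking $M_1$ to be the inverse of the resulting basis matrix. After relabelling, write $G(\bt):=F(M_1^{-1}\bt)$, which is a non-degenerate quadratic form in $\bt=(t_0,\ldots,t_n)$ with $G(1,0,\ldots,0)=0$. In particular the coefficient of $t_0^2$ in $G$ vanishes, so
\begin{equation*}
G(\bt)=t_0\, L(t_1,\ldots,t_n)+Q(t_1,\ldots,t_n)
\end{equation*}
for some linear form $L$ and quadratic form $Q$ in $n$ variables. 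Non-degeneracy of $G$ forces $L\not\equiv 0$: otherwise the partial derivative $\partial G/\partial t_0$ would vanish identically and the Gram matrix of $G$ would have a zero row.

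Next, since $L\not\equiv 0$, I can pick a second matrix $M_2$ acting only on the last $n$ coordinates so that $L$ becomes the linear form $t_1$. This reduces the form to
\begin{equation*}
t_0 t_1+Q'(t_1,\ldots,t_n)
\end{equation*}
for some new quadratic form $Q'$. Split off the $t_1$-dependence of $Q'$ by writing $Q'=\alpha t_1^2+t_1 L'(t_2,\ldots,t_n)+F_2(t_2,\ldots,t_n)$, and absorb the first two terms into $t_0 t_1$ via the substitution $t_0\mapsto t_0-\alpha t_1-L'(t_2,\ldots,t_n)$, which is a further $\operatorname{GL}_{n+1}(\BR)$-change $M_3$ fixing $t_1,\ldots,t_n$. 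This leaves $G=t_0 t_1+F_2(t_2,\ldots,t_n)$.

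Finally, setting $M:=M_3 M_2 M_1$, I note that $M_2$ and $M_3$ fix the point $[1:0:\cdots:0]$ by construction (they act trivially on the first coordinate or only shear it), so the first normalisation $M\bxi=[1:0:\cdots:0]$ is preserved. The non-degeneracy of $F_2$ is then automatic: the Gram matrix of $F$ in the new coordinates is block-diagonal with a $2\times 2$ block $\bigl(\begin{smallmatrix}0&1/2\\ 1/2&0\end{smallmatrix}\bigr)$ of determinant $-1/4$ and the $(n-1)\times(n-1)$ Gram matrix of $F_2$, so $\det F\neq 0$ iff $\det F_2\neq 0$. The only mildly delicate step is the first one, ensuring the existence of $M_1$ and verifying that $L\not\equiv 0$ from non-degeneracy; everything else is standard completion-of-the-square bookkeeping.
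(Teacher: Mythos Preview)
Your proof is correct. It takes a different route from the paper's, though both are standard. The paper works directly with the symmetric bilinear form $B$ associated to $F$: it picks an isotropic vector $\bv_1$ with $2B(\bv_1,\bxi)=1$ (found by first choosing any $\bv_1$ with $B(\bv_1,\bxi)\neq 0$, then translating by a multiple of $\bxi$ to make it isotropic, then scaling), and then takes $\bv_2,\ldots,\bv_n$ to be a basis of the $B$-orthogonal complement of $\operatorname{span}(\bxi,\bv_1)$; the columns of $M^{-1}$ are $\bxi,\bv_1,\ldots,\bv_n$. This is the classical ``split off a hyperbolic plane'' (Witt) construction. Your approach is the coordinate-based analogue: send $\bxi$ to the first basis vector, observe that the resulting form has no $t_0^2$ term, normalise the linear-in-$t_0$ part to $t_0t_1$, then complete the square to strip $t_1$ from the residual quadratic. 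The paper's version is a bit more conceptual and makes the non-degeneracy of $F_2$ immediate (restriction of a non-degenerate form to the orthogonal complement of a non-degenerate subspace), whereas yours is more algorithmic and self-contained, with non-degeneracy checked via the block-determinant argument. Either is perfectly adequate here.
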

	
	\begin{proof}
		Via a slight abuse of notation we write $\bxi = (\xi_0,\ldots, \xi_n)$ and recall that we have $F(\bxi)=0$. Let $B(\bu,\bv)$ be the bilinear form associated to the quadratic form $F$. Choose a vector $\bv_1\in \BR^{n+1}$ which is linearly independent of $\bxi$ and satisfies $B(\bv_1,\bxi)\neq 0$. For $\lambda \in \BR$ we have
  $$F(\bv_1+\lambda \bxi)= F(\bv_1) + 2 \lambda B(\bv_1, \bxi).$$
  Hence we can find $\lambda$ such that $F(\bv_1+\lambda \bxi)=0$ and $B(\bv_1+\lambda \bxi,\bxi)\neq 0$. We replace $\bv_1$ by $\bv_1+\lambda \bxi$. Moreover, after multiplying $\bv_1$ by a scalar we may assume that $2B(\bv_1,\bxi)=1$. Next consider the orthogonal complement of $\bv_1$ and $\bxi$ under the bilinear form $B$ and choose a basis $\bv_2,\ldots, \bv_n$. In particular we have
  $$B(\bv_j,\bv_1)=B(\bv_j,\bxi)=0,\quad 2\leq j\leq n,$$
  and the vectors $\bxi,\bv_1,\ldots, \bv_n$ are a basis of $\BR^{n+1}$. Now we take $M$ to be the matrix such that the columns of $M^{-1}$ are given by $\bxi,\bv_1,\ldots, \bv_n$.
\end{proof}
With respect to this new coordinate system $\bt$, we now define the norm $\|\cdot\|_{\bxi}$ and the height function $H_{\bxi}$ as follows:
 \begin{equation}\label{eq:normbxi}
     \|\bx\|_{\bxi}:=\max_{0\leqslant i\leqslant n}|t_i(\bx)|,\quad  \text{for }\bx\in\BR^{n+1},
 \end{equation}
	\begin{equation}\label{eq:heightbxi}
	    H_{\bxi}([x_0:\cdots:x_n]):=\|\bx\|_{\bxi},\quad \text{if }\bx=(x_0,\ldots,x_n)\in\BZ^{n+1}\text{ and }\gcd_{0\leq i\leq n}(x_i)=1.
	\end{equation} Hence $H_{\bxi}$ is the naive height function after a linear change of coordinate systems.
	
	Applying the change of variables from Lemma \ref{change of variables}, we see that in the chart $t_0\neq 0$, the affine tangent plane at the point $\boldsymbol{0}$ is $T_{\boldsymbol{0}} V(\BR)=(\frac{t_1}{t_0}=0)$. Moreover,  $\frac{t_2}{t_0},\ldots,\frac{t_n}{t_0}$ form a local coordinate system of the hypersurface $V$ around $\bxi$. We now take the local diffeomorphism as 
 \begin{equation}\label{eq:gxi}
     g_{\bxi}(\pi(\bx)):=\left(\frac{t_2}{t_0}(\bx),\ldots,\frac{t_n}{t_0}(\bx)\right).
 \end{equation}
 
	By covering any open neighborhood of $\mathbf{0}$ by cubes, it is sufficient to consider the case \begin{equation}\label{eq:U0}
	    U_0=\prod_{j=2}^{n}[a_j,b_j],
	\end{equation} a standard $(n-1)$-dimensional cube, where $-\infty<a_j<b_j<\infty$ for all $2\leqslant j\leqslant n$. The real zoom condition (\ref{real zoom condition in x}) becomes 
	\begin{equation}\label{real zoom condition in t}
		R\f{t_j}{t_0}(\bx)\in[a_j,b_j] \textrm{ for all }2\leq j \leq n.
	\end{equation}
	
	\subsection{Choice of weight functions}

	In order to apply the $\delta$-method of Heath-Brown, we need to work with a smoothly weighted analogue of $\CN_{\CW}((R,\bxi),(L,\bGamma);B)$. 
 For an infinitely differentiable function $w: \BR^{n+1}\ra \BR_{\geq 0}$, we define 
	
	\begin{equation}\label{smoothly weighted counting problem}
		\CN_{\CW}(w;(L,\bGamma)):=\sum_{\substack{\bx\in\BZ^{n+1}\\ F(\bx)=0, \bx\equiv \bGamma\bmod L}}w(\bx).
	\end{equation}
	We would like $w$ to capture the real zoom condition (\ref{real zoom condition in t}) and the bounded height condition. Therefore, we let
	
	\begin{equation}\label{general form of weight function wbr}
		w_{B,R}(\bx)=w_1\left(R\frac{t_j}{t_0}(\bx),j\geqslant 2\right)w_2\left(\frac{R^2}{B^2}F(\bx)\right)w_3\left(\frac{t_0(\bx)}{B}\right),
	\end{equation}
	where $w_1:\BR^{n-1}\to [0,1], w_2, w_3:\BR\to[0,1]$ are infinitely differentiable weight functions and $w_2(0)=1$. 
 
 Several comments on the role of $w_1,w_2,w_3$ are in order. Clearly $w_1$ ``mollifies'' the neighbourhood $U_0$. Due to (\ref{real zoom condition in t}) and the assumption $R\geq 1$, the condition $\|\bx\|_{\bxi}\ll B$ is equivalent to $|t_0| \ll B$. Therefore, $w_3$ imposes a ``mollified bounded height condition''.
 The inclusion of the weight function $w_2$ is crucial for the ``level-lowering''. Indeed, let $\bx\in\BZ^{n+1}\setminus\boldsymbol{ 0}$ be such that $t_0(\bx)\neq 0$. If \eqref{real zoom condition in t} is satisfied, as prescribed by $w_1$, then (as in Lemma \ref{change of variables}) $F_2(t_2, \ldots, t_n) \ll R^{-2}|t_0|^2$, and so the equation $$F(\bx)=t_0^2\left(\frac{t_1}{t_0} + F_2\left(\frac{t_2}{t_0}, \ldots ,\frac{t_n}{t_0}\right)\right)=0$$
 implies that \begin{equation}\label{eq:t1t0}
	    |t_1/t_0| \ll R^{-2}.
	\end{equation} If moreover $\|\bx\|_{\bxi}\ll B$, as prescribed by $w_3$, then together with \eqref{real zoom condition in t} and \eqref{eq:t1t0} we obtain $$F(\bx)\ll \frac{B^2}{R^2}.$$ 
 Since this bound relies on the condition $F(\bx) = 0$ (which will be captured later using the $\delta$-symbol), the effect of $w_2$ cannot be encapsulated by $w_1$ and $w_3$ alone. We could have instead inserted a weight function of the form $w_4\left(R^2\frac{t_1}{t_0}\right)$, but this turns out to have an equivalent role to $w_2$, and working with $w_2$ is technically simpler. 
 
 Moreover, to deduce information about the ``unweighted'' counting function $\CN_{\CW}((R,\bxi),(L,\bGamma);B)$ from the weighted one $\CN_{\CW}(w_{B,R};(L,\bGamma))$, we would like $w_1$ and $w_3$ to approximate the characteristic functions $\prod_{j\geq 2} 1_{[a_j,b_j]}$ and $1_{[-1,1]}$ respectively. To make this more precise, we define a small parameter $0<\eta <1$ (which will eventually be taken to be a small negative power of $B$), and consider smooth weights $w_1^{\pm}:\BR^{n-1}\ra [0,1], w_3^{\pm}: \BR\ra [0,1]$ satisfying 
	\begin{enumerate}
		\item $\operatorname{supp}(w_1^+)=\prod_{j\geq 2}[a_j-\eta, b_j+\eta];$
		\item $w_1^+ = 1$ on $\prod_{j\geq 2}[a_j, b_j]$;
		\item $\operatorname{supp}(w_1^-)=\prod_{j\geq 2}[a_j, b_j];$
		\item $w_1^- =1$ on $\prod_{j\geq 2}[a_j+\eta, b_j-\eta]$; 
		\item $\operatorname{supp}(w_3^+) = [-1-\eta,1+\eta]\setminus [-\frac{\eta}{2},\frac{\eta}{2}]$;
		\item $w_3^+ = 1$ on $[-1,1]\setminus [-\eta,\eta]$;
		\item $\operatorname{supp}(w_3^-) = [-1,1]\setminus [-\frac{\eta}{2},\frac{\eta}{2}]$;
		\item $w_3^- = 1$ on $[-1+\eta,1-\eta]\setminus [-\eta,\eta]$.
	\end{enumerate}
	For an infinitely differentiable function $f:\BR^{m} \ra \BR$ and an integer $N\geq 0$, we define 
	\begin{equation}\label{def partial derivative norm}
 \|f\|_N = \sup_{\ss{\bx \in \BR^{m}\\ \boldsymbol{k} \in (\BZ_{\geq 0})^{m}\\ k_1+\cdots + k_m \leq N}}|\partial^{\boldsymbol{k}}f(\bx)|
 \end{equation}
	to be the largest partial derivative of $f$ of order at most $N$. In particular, we recover the supremum of $|f|$ via $\|f\|_0$, a fact which is used in the analysis of the oscillatory integrals.
	
	\begin{lemma}\label{existence of smooth weights}
		There exist smooth weight functions $w_1^{\pm}, w_3^{\pm}$ satisfying the above properties, and such that for any $N\geq 0$, we have $\|w_1^{\pm}\|_N, \|w_3^{\pm}\|_N \ll_N \eta^{-N}$.
	\end{lemma}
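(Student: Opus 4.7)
The plan is to build each weight function by convolving a suitable characteristic function with a scaled mollifier of width $\eta$, the standard construction being mild enough that the derivative estimates drop out almost for free.

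First I would fix once and for all a smooth non-negative function $\phi:\BR\to\BR_{\geq 0}$ with $\operatorname{supp}(\phi)\subseteq[-\tfrac12,\tfrac12]$ and $\int_{\BR}\phi=1$, for example built from the classical bump $e^{-1/(1-x^2)}$. Define the scaled mollifier $\phi_\eta(x):=\eta^{-1}\phi(x/\eta)$, which satisfies $\operatorname{supp}(\phi_\eta)\subseteq[-\eta/2,\eta/2]$, $\int \phi_\eta=1$, and
\[
 \int_{\BR}|\phi_\eta^{(N)}(x)|\,\op{d}x=\eta^{-N}\int_{\BR}|\phi^{(N)}(u)|\,\op{d}u=O_N(\eta^{-N}).
\]
For any bounded interval $I\subseteq\BR$ set $\chi_I^\eta:=1_I*\phi_\eta$; then $\chi_I^\eta\in C^\infty(\BR)$, $\operatorname{supp}(\chi_I^\eta)\subseteq I+[-\eta/2,\eta/2]$, and $\chi_I^\eta\equiv 1$ on $\{x:[x-\eta/2,x+\eta/2]\subseteq I\}$. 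Differentiating under the convolution yields
\[
 \|\chi_I^\eta\|_N=\sup_{x\in\BR}\bigl|(1_I*\phi_\eta^{(N)})(x)\bigr|\leq \int_{\BR}|\phi_\eta^{(N)}|\ll_N \eta^{-N}.
\]

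Next I would assemble $w_1^{\pm}$ and $w_3^{\pm}$. For $w_1^{\pm}$, take the one–dimensional building blocks
\[
 u_j^+:=\chi^\eta_{[a_j-\eta/2,\,b_j+\eta/2]},\qquad u_j^-:=\chi^\eta_{[a_j+\eta/2,\,b_j-\eta/2]}\quad(2\leq j\leq n),
\]
and define $w_1^{\pm}(y_2,\ldots,y_n):=\prod_{j=2}^{n}u_j^{\pm}(y_j)$. The claimed support and value conditions (1)–(4) are immediate from the definition of $\chi_I^\eta$. For $w_3^{\pm}$ I would write $w_3^{\pm}=\alpha^{\pm}\beta^{\pm}$, where $\alpha^{\pm}$ is an ``outer plateau'' and $\beta^{\pm}$ removes a small neighbourhood of the origin, taking
\[
 \alpha^+:=\chi^\eta_{[-1-\eta/2,\,1+\eta/2]},\quad \alpha^-:=\chi^\eta_{[-1+\eta/2,\,1-\eta/2]},
\]
\[
 \beta^+:=1-\chi^\eta_{[-\eta/2,\,\eta/2]},\quad \beta^-:=1-\chi^\eta_{[-\eta/2,\,\eta/2]} \text{ (with scale chosen so the hole has width }\eta\text{)}.
\]
A direct check using the support/value properties of $\chi_I^\eta$ verifies conditions (5)–(8); one may need to adjust the interval widths slightly to match the stated half–$\eta$ buffers, which is purely bookkeeping.

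Finally, I would verify the derivative bounds. For $w_1^{\pm}$ the Leibniz rule gives
\[
 \|w_1^{\pm}\|_N=\sup_{\boldsymbol{k}:\,|\boldsymbol{k}|\leq N}\Bigl|\prod_{j=2}^{n}u_j^{\pm (k_j)}(y_j)\Bigr|\leq \prod_{j=2}^{n}\|u_j^{\pm}\|_{k_j}\ll_N \prod_{j=2}^{n}\eta^{-k_j}\leq \eta^{-N},
\]
since $\sum k_j\leq N$. For $w_3^{\pm}=\alpha^{\pm}\beta^{\pm}$ the ordinary product rule yields
\[
 \|w_3^{\pm}\|_N\leq\sum_{k=0}^{N}\binom{N}{k}\|\alpha^{\pm}\|_k\|\beta^{\pm}\|_{N-k}\ll_N \sum_{k=0}^{N}\eta^{-k}\eta^{-(N-k)}\ll_N \eta^{-N}.
\]
The main (mildly annoying) obstacle is purely cosmetic: choosing the interval endpoints inside each $\chi^\eta_I$ so that the resulting supports and plateaus match the precise $\eta$, $\eta/2$ thresholds in items (1)–(8). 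This is handled by nothing more than replacing $\eta$ by $\eta/2$ in the mollifier (or equivalently shrinking $I$ by an extra $\eta/4$), which does not affect the order of magnitude of $\|\cdot\|_N$. \qed
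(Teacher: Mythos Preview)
Your construction is correct and is exactly the approach the paper has in mind: the paper's proof is a one-line remark that the weights are obtained from ``linear transformations and convolutions of the standard bump function'' (with a citation for the details), and your mollified-indicator construction $\chi_I^\eta=1_I*\phi_\eta$ together with products is precisely that. Your acknowledged endpoint bookkeeping (rescaling the mollifier to width $\eta/2$ so that the hole near the origin and the outer plateaus match the stated $\eta$ and $\eta/2$ thresholds in (5)--(8)) is indeed cosmetic and does not affect the $\eta^{-N}$ bounds.
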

	\begin{proof}
		Weight functions with the desired properties can all be built from linear transformations and convolutions of the \textit{standard bump function} $\psi:\BR\rightarrow \BR_{\geq 0}$ defined by 
		\begin{equation*}
			\psi(x) =
			\begin{cases}
				\exp\l(1+\f{1}{|x|^2-1}\r), &\textrm{ if }|x|< 1,\\
				0, &\textrm{ otherwise}
			\end{cases}
		\end{equation*}
		and its higher dimensional analogue, in which $|\cdot|$ is replaced with the Euclidean norm (see \cite[Section 4]{Shute}).
	\end{proof}
	
	Let $w_2:\BR \ra \BR_{\geq 0}$ be the standard bump function, and let $w_1^{\pm}, w_3^{\pm}$ be as in Lemma \ref{existence of smooth weights}. We define weight functions $w_{B,R}^{\pm}: \mathbb{R}^{n+1}\rightarrow \mathbb{R}_{\geq 0}$ by 
	\begin{equation}\label{choice of wbr}
		w_{B,R}^{\pm}(\bx) := w_1^{\pm}\left(R\frac{t_j}{t_0}(\bx), j \geq 2\right)w_2\left(\frac{R^2}{B^2}F(\bx)\right)w_3^{\pm}\left(\frac{t_0(\bx)}{B}\right).
	\end{equation}
	
	It is clear that $w_{B,R}^{\pm} $ is supported away from zero, and satisfies $\|w_{B,R}^{\pm}\|_N \ll_N \eta^{-N}$ for any $N\geq 0$. We shall also make use of the functions $w_R^{\pm}$, defined as in (\ref{choice of wbr}) but without the $w_2$ and $w_3^{\pm}$ factors. 

 \subsection{Weighted counting results}
 We are ready to state our main counting results in terms of the aforementioned weight functions.
	\begin{theorem}\label{thm:countingW}

	Assume that $R,L$ satisfy \eqref{eq:tau} for $-1<\tau<1$. Let $w_{B,R}^{\pm}
 $ be one of the smooth weights defined by \eqref{choice of wbr} and $\varepsilon >0$. Let $A>0$ be a real parameter such that $\eta^{-1}\ll B^A$.
  Then uniformly in $L,\bGamma\in\CW^o(\BZ/L\BZ)$, we have  
		$$\CN_{\CW}(w_{B,R}^{\pm};(L,\bGamma))=B^{n-1}\CI(w_R^{\pm})\mathfrak{S}(\CW;L,\bGamma)+O_{\tau, A,\varepsilon}\left(\frac{B^{n-1+\varepsilon}}{L^n R^{n-1}}E_\tau(\eta;B)\right), $$ where \begin{equation}\label{eq:Etau}
		    E_\tau(\eta;B):= \begin{cases}
			\eta^{\f{9-5n}{2}}B^{-\frac{n-3}{2}\tau} &\text{ if } n\geqslant 5;\\
			\eta^{-15/2}B^{-\tau} + B^{-\frac{1+\tau}{4}} & \text{ if } n=4.
		\end{cases}
		\end{equation}
		Here $\mathfrak{S}(\CW;L,\bGamma)$ is the singular series defined in \eqref{eq:singserW}  and $\CI(w_R^{\pm})$ is a singular integral defined later in \eqref{eq:CIwR}.
	\end{theorem}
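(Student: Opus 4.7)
The plan is to apply Heath--Brown's $\delta$-symbol method, as set up in \S\ref{se:setupdelta}, to detect the equation $F(\bx)=0$ in the definition \eqref{smoothly weighted counting problem}. The key choice is the parameter $Q \asymp B/R$, which is dictated by the support of $w_2$: the weight function $w_{B,R}^{\pm}$ forces $F(\bx)\ll B^2/R^2$, so taking $Q$ of this order ensures that the auxiliary function $h(q/Q, F(\bx)/Q^2)$ is essentially trivial on the support, and all three smooth factors in $w_{B,R}^{\pm}$ contribute their full level-lowering. First I would incorporate the congruence $\bx \equiv \bGamma \bmod L$ by regrouping residue classes modulo $qL$; then Poisson summation in $\bx$ converts the sum into a linear combination, indexed by $\bc \in \BZ^{n+1}$, of a \emph{twisted quadratic exponential sum} $S_{qL}(\bc;\bGamma)$ and a Fourier-type \emph{oscillatory integral} $I_{q,R,B}(\bc)$. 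Schematically:
\begin{equation*}
\CN_{\CW}(w_{B,R}^{\pm};(L,\bGamma)) \;=\; \frac{c_Q}{Q^2}\sum_{q\leq Q}\frac{1}{(qL)^{n+1}}\sum_{\bc\in\BZ^{n+1}} S_{qL}(\bc;\bGamma)\, I_{q,R,B}(\bc).
\end{equation*}

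Next I would separate the $\bc=\boldsymbol{0}$ and $\bc\neq\boldsymbol{0}$ contributions. For $\bc=\boldsymbol{0}$ (treated in \S\ref{se:c=0}): the sum over $q$ and over multiplicative characters assembles, via standard arguments, into the singular series $\mathfrak{S}(\CW;L,\bGamma)$ from \eqref{eq:singserW}, using the identification of $S_{qL}(\boldsymbol{0};\bGamma)$ with the count of solutions of $F\equiv 0 \bmod q$ compatible with $\bGamma \bmod L$; the Fourier integral at $\bc=\boldsymbol{0}$ collapses to the singular integral $\CI(w_R^{\pm})$ after integrating the $\delta$-symbol kernel over $\theta$. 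Together with the $Q$-scaling and the normalisation $c_Q=1+O(Q^{-N})$, this yields the main term $B^{n-1}\CI(w_R^{\pm})\mathfrak{S}(\CW;L,\bGamma)$, with a negligible error absorbed into $E_\tau(\eta;B)$.

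For $\bc\neq\boldsymbol{0}$ (treated in \S\ref{se:cneq0}) I would combine the estimates from \S\ref{se:oscint} and \S\ref{se:quadexpsum}. The oscillatory integral estimates exploit repeated integration by parts, using the bound $\|w_{B,R}^{\pm}\|_N \ll_N \eta^{-N}$ from Lemma \ref{existence of smooth weights} together with the anisotropic stretching (scale $R^{-1}$ in the $n-1$ zoom directions, $R^{-2}$ in the level direction via $w_2$, and order $1$ in the $t_0$ direction): this produces rapid decay in $|\bc|$ weighted by the appropriate length scales. The exponential sum estimates generalise the analysis of $S_q(\bc)$ in \cite[\S9--\S11]{H-Bdelta}, with the added twist coming from the $\bGamma \bmod L$ condition. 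After summing over $q\leq Q$ and $\bc$, the resulting geometric series is geometric-like in $B/R$; inserting the condition $(LR)^2 = O(B^{1-\tau})$ yields the stated power saving $B^{-(n-3)\tau/2}$ for $n\geq 5$, with the weight-derivative factor $\eta^{(9-5n)/2}$ recording the cost of integration by parts.

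The main obstacle is twofold. First, uniformity over $L$ and $\bGamma$, especially over the ``bad moduli'' (primes dividing $2L\Delta_F$), where the standard quadratic Gauss sum bounds degenerate and one must square-root-cancellation the $\bGamma$-twisted sums by a more delicate $p$-adic stationary phase analysis. This is the analogue of the ``special care'' alluded to in the paper for bad primes. Second, the case $n=4$: the naive bound from the general scheme just fails to give a power saving, and one must either (i) average additionally over $q$ or $\bc$ using large-sieve or Kloosterman-type estimates to squeeze out the extra factor $B^{-\tau}$, or (ii) invoke the separate bound $B^{-(1+\tau)/4}$ stemming from a direct lattice-point argument on the level sets of $F$. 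Both branches of the max in \eqref{eq:Etau} would then arise from optimally balancing these two strategies.
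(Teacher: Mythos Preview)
Your overall architecture is correct and matches the paper: apply the $\delta$-method, Poisson sum, and split into $\bc=\boldsymbol{0}$ and $\bc\neq\boldsymbol{0}$. However, there are two genuine gaps and one misattribution.

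First, your choice $Q\asymp B/R$ misses half of the level-lowering. The paper takes $Q=B/(LR)$ (see \eqref{eq:Q}), and this is not cosmetic. The point is that before invoking $\delta(\cdot)$ one writes $\bx=L\by+\blambda$ and observes (Lemma~\ref{le:deltacount}) that $F(\bx)=LH_{\blambda,L}(\by)+L^2F(\by)$, so $F(\bx)=0$ forces $L^2\mid F(\bx)$; one then detects $\delta(F(\bx)/L^2)$, for which the relevant bound is $|F(\bx)|/L^2\ll (B/(LR))^2$. Without this $L^2$-trick the $q$-sum runs up to $B/R$ rather than $B/(LR)$, and the resulting error terms are off by powers of $L$, destroying the claimed uniformity.

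Second, for $\bc\neq\boldsymbol{0}$ you omit the lattice constraint of Proposition~\ref{prop:condc}: $S_{q,L,\blambda}(\bc)$ vanishes unless $\bc\in\Gamma_{\blambda,L}=\{\bd:\bd\equiv b\nabla F(\blambda)\bmod L\text{ for some }b\}$, a rank-$(n+1)$ lattice of covolume $\asymp L^n$. This, together with the counting Lemma~\ref{le:countingcmodL}, is precisely what buys the factor $L^{-n}$ in the error term. Integration by parts and square-root cancellation in $S_{q,L,\blambda}(\bc)$ alone do not produce it.

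Third, your diagnosis of the $n=4$ case is off on both branches. The term $B^{-(1+\tau)/4}$ in \eqref{eq:Etau} does not come from a lattice-point argument; it is exactly the tail $X^{(3-n)/2}$ of the $\bc=\boldsymbol{0}$ contribution in Theorem~\ref{thm:c=0} evaluated at $n=4$, $X=B^{(1+\tau)/2}$. And the improvement for $\bc\neq\boldsymbol{0}$ when $n=4$ is not a Kloosterman or large-sieve average: it is the algebraic fact (second half of Corollary~\ref{co:s1qbound}) that for $n$ even the good-modulus sum $S^{(1)}$ is a quadratic character sum which vanishes unless $q_1$ is a perfect square, yielding the sharper bound \eqref{n=4 better estimate for Sqc}.
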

	
	\begin{remarks}\hfill
 \begin{itemize}
 \item If $0<\tau<1$, then the main term $B^{n-1}\CI(w_R^{\pm})\mathfrak{S}(\CW;L,\bGamma)$ is dominant and (up to the weight function $w$) eventually gives the main contribution in Theorem \ref{main counting result}. However, the ``error term'' becomes dominant when $-1<\tau\leqslant 0$. Even though in this case Theorem \ref{thm:countingW} provides only an upper bound, this is crucial for the removal of smooth weights back to the ``constant weight''.
     \item  In Theorem \ref{thm:countingW}, once $w_1,w_2,w_3$ are fixed, the implied constant depends only on the small parameter $\eta$. We may replace $w^{\pm}_{B,R}$ with any weight function $w$ of the form (\ref{general form of weight function wbr}) which is supported away from zero, at the cost of having implied constants that depend implicitly on the $w_1,w_2,w_3$-components of $w$.
 \end{itemize}
	\end{remarks}

	\subsection{Poisson summation}
	For $n\in\BZ$, we define the $\delta$-symbol to be $$\delta(n)=\begin{cases}
		1 & \text{ if } n=0;\\ 0 & \text{ otherwise}.
	\end{cases}$$
	According to Heath-Brown's work \cite[Theorem 1]{H-Bdelta}, for every $Q>1$, we can write
	\begin{equation}\label{eq:delta}
		\delta(n)=\frac{C_Q}{Q^2}\sum_{q=1}^{\infty}\sum_{\substack{a\bmod q\\(a,q)=1}}\textup{e}_q\left(an\right)h\left(\frac{q}{Q},\frac{n}{Q^2}\right),
	\end{equation} where $$C_Q=1+O_N(Q^{-N}),$$ and $h:(0,\infty)\times\BR\to\BR$ is a certain infinitely differentiable function, whose properties are described in \cite[\S4]{H-Bdelta}.
	
	Given $\bGamma\in\CW(\BZ/L\BZ)$, we shall work with a fixed representative $\blambda=(\lambda_0,\cdots,\lambda_n)\in\BZ^{n+1}$ of $\bGamma$ whose coordinates are bounded by $L$. Note that the assumption $\bGamma\in\CW^o(\BZ/L\BZ)$ implies that the gcd of the coordinates of $\blambda$ is coprime to $L$.
 Let us consider the polynomial 
	\begin{equation}\label{eq:H}
		H_{\boldsymbol{\lambda},L}(\by):=\frac{F(\boldsymbol{\lambda})}{L}+\nabla F(\boldsymbol{\lambda})\cdot \by.
	\end{equation} From now on, let $w_{B,R}$ be a weight function of the form \eqref{general form of weight function wbr}. The goal now is to show 
		\begin{proposition}\label{prop:deltacount}
			We have
		\begin{equation}\label{eq:Nwdelta}
			\begin{split}
				\CN_{\CW}(w_{B,R};(L,\bGamma))=\frac{C_Q}{Q^2}\sum_{q=1}^{\infty}\sum_{\bc\in\BZ^{n+1}}\frac{S_{q,L,\blambda}(\bc)I_{q,L,\blambda}(w_{B,R};\bc)}{(qL)^{n+1}}, 	\end{split}
		\end{equation} where $C_Q=1+O_N(Q^{-N})$ and for every $q\ll Q, \bc\in\BZ^{n+1}$,
	\begin{equation}\label{eq:Iqc1}
	I_{q,L,\blambda}(w_{B,R};\bc):=\int_{\BR^{n+1}}w_{B,R}(L\by+\boldsymbol{\lambda})h\left(\frac{q}{Q},\frac{F(L\by+\boldsymbol{\lambda})}{L^2Q^2}\right)\e_{qL}\left(-\bc\cdot\by\right)\operatorname{d}\by,
\end{equation} 	and \begin{equation}\label{eq:Sqc}
S_{q,L,\blambda}(\bc):=\sum_{\substack{a\bmod q\\(a,q)=1}}\sum_{\substack{\bsigma\in(\BZ/qL\BZ)^{n+1}\\ H_{\boldsymbol{\lambda},L}(\boldsymbol{\sigma})\equiv 0\bmod L}}\textup{e}_{qL}\left(a\left(H_{\boldsymbol{\lambda},L}(\boldsymbol{\sigma})+LF(\boldsymbol{\sigma})\right)+\bc\cdot\bsigma\right).
\end{equation}
	\end{proposition}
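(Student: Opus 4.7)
The plan is to apply Heath-Brown's $\delta$-symbol identity \eqref{eq:delta} followed by Poisson summation, after a change of variables that linearises the congruence condition.

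First, I would parametrise $\bx = \blambda + L\by$ with $\by \in \BZ^{n+1}$, which encodes $\bx \equiv \bGamma \bmod L$ automatically. Since $L \mid F(\blambda)$ (a consequence of $\bGamma \in \CW(\BZ/L\BZ)$), a direct expansion gives $F(\blambda+L\by) = L \cdot M(\by)$, where
\[ M(\by) := H_{\blambda,L}(\by) + L F(\by) \in \BZ. \]
The key observation is that $M \equiv H_{\blambda,L} \bmod L$, so $M(\by)=0$ already forces $H_{\blambda,L}(\by) \equiv 0 \bmod L$; this congruence may therefore be inserted as a free indicator without altering the sum. On this restricted range $M(\by)/L \in \BZ$ and has the same vanishing set as $M(\by)$, so $\delta(M(\by)) = \delta(M(\by)/L)$. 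Applying \eqref{eq:delta} to $\delta(M(\by)/L)$ with parameter $Q$, together with the identities $\e_q(aM(\by)/L) = \e_{qL}(aM(\by))$ and $M(\by)/(LQ^2) = F(L\by+\blambda)/(L^2Q^2)$, converts $\CN_{\CW}(w_{B,R};(L,\bGamma))$ into
\[ \frac{C_Q}{Q^2}\sum_{q=1}^{\infty}\sum_{(a,q)=1}\sum_{\substack{\by \in \BZ^{n+1}\\ H_{\blambda,L}(\by) \equiv 0 \bmod L}} \e_{qL}(aM(\by))\, h\!\left(\frac{q}{Q},\frac{F(L\by+\blambda)}{L^2Q^2}\right) w_{B,R}(L\by+\blambda), \]
which already displays the correct $h$-argument of \eqref{eq:Iqc1}.

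Next, I would split the $\by$-sum into residue classes modulo $qL$ by writing $\by = qL\bv + \bsigma$. A short computation gives
\[ M(qL\bv+\bsigma) - M(\bsigma) = qL\,\nabla F(\blambda)\cdot\bv + qL^2\, B(\bsigma,\bv) + q^2L^3\, F(\bv), \]
where $B$ is the bilinear form associated to $F$; every term is divisible by $qL$, so both $\e_{qL}(aM(\by))$ and the congruence $H_{\blambda,L}(\by) \equiv 0 \bmod L$ depend only on $\bsigma \bmod qL$. Setting
\[ \varphi(\by) := w_{B,R}(L\by+\blambda)\, h\!\left(\frac{q}{Q},\frac{F(L\by+\blambda)}{L^2 Q^2}\right), \]
Poisson summation on $\sum_{\bv} \varphi(qL\bv+\bsigma)$ produces $(qL)^{-(n+1)}\sum_{\bc} \widehat{\varphi}(\bc/(qL))\, \e_{qL}(\bsigma\cdot\bc)$. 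One directly recognises $\widehat{\varphi}(\bc/(qL)) = I_{q,L,\blambda}(w_{B,R};\bc)$, while regrouping the $(a,\bsigma)$-dependence assembles precisely $S_{q,L,\blambda}(\bc)$, yielding \eqref{eq:Nwdelta}.

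The main technical obstacle is justifying the two sum interchanges. The $q$-sum in \eqref{eq:delta} is effectively finite---$h(q/Q,\cdot)$ vanishes for $q \gg Q$ by construction---so the exchange of the $q$-sum with the $\by$-sum is routine. The Poisson step is legitimate because $w_{B,R}$ is compactly supported and smooth, so $\widehat{\varphi}$ decays faster than any polynomial and the resulting $\bc$-sum converges absolutely; a quantitative version of this decay is carried out in the oscillatory integral bounds of Section \ref{se:oscint}.
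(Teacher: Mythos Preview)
Your proposal is correct and follows essentially the same route as the paper: rewrite $F(L\by+\blambda)=L\bigl(H_{\blambda,L}(\by)+LF(\by)\bigr)$, insert the congruence $L\mid H_{\blambda,L}(\by)$ for free, apply the $\delta$-identity to the resulting integer $F(L\by+\blambda)/L^2$, then split $\by$ into residues modulo $qL$ and Poisson-sum. The paper packages the first step as a separate lemma (Lemma~\ref{le:deltacount}) but the content is identical; your added remarks on justifying the interchanges are a welcome clarification that the paper leaves implicit.
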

	Similarly to \cite{Browning-Loughran}, the proof of Theorem \ref{thm:countingW} relies on Heath-Brown's $\delta$-method \eqref{eq:delta}.
	However, to obtain the optimal error term, our way of activating \eqref{eq:delta} is different from \cite[\S4]{Browning-Loughran}, in which we need to understand properly the precise range of $F$ when zoom conditions are imposed and then to integrate it into the formula of \cite[Theorem 2]{H-Bdelta}. This is part of our ``level-lowering'' procedure regarding the congruence condition $(L,\blambda)$, and our treatment is closer to Sardari's \cite{Sardari}.

\begin{lemma}\label{le:deltacount}
	We have
	\begin{equation}\label{eq:N1}
		\CN_{\CW}(w_{B,R};(L,\bGamma))=\sum_{\substack{\by\in\BZ^{n+1}\\ L\mid H_{\boldsymbol{\lambda},L}(\by)}} w_{B,R}(L\by+\boldsymbol{\lambda})\delta\left(\frac{F(L\by+\boldsymbol{\lambda})}{L^2}\right).
	\end{equation}
\end{lemma}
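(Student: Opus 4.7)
The plan is to perform the change of variables $\bx = L\by + \blambda$, which establishes a bijection between integer vectors $\bx \in \BZ^{n+1}$ with $\bx \equiv \blambda \bmod L$ and arbitrary $\by \in \BZ^{n+1}$. Under this substitution, the congruence condition $\bx \equiv \bGamma \bmod L$ in the definition \eqref{smoothly weighted counting problem} of $\CN_{\CW}(w_{B,R};(L,\bGamma))$ is absorbed, and $w_{B,R}(\bx)$ becomes $w_{B,R}(L\by+\blambda)$. The remaining task is to reinterpret the equation $F(\bx)=0$ as a $\delta$-symbol evaluated at an integer argument, in such a way that the divisibility condition $L\mid H_{\blambda,L}(\by)$ appears naturally.

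The key computation is the Taylor expansion of $F$, which terminates since $F$ is a quadratic form:
\begin{equation*}
F(L\by+\blambda) = F(\blambda) + L\,\nabla F(\blambda)\cdot \by + L^2 F(\by).
\end{equation*}
Using $F(\blambda)\equiv 0\bmod L$ (which holds because $\bGamma\in\CW(\BZ/L\BZ)$), dividing by $L^2$ gives
\begin{equation*}
\frac{F(L\by+\blambda)}{L^2} = \frac{1}{L}\left(\frac{F(\blambda)}{L} + \nabla F(\blambda)\cdot \by\right) + F(\by) = \frac{H_{\blambda,L}(\by)}{L} + F(\by).
\end{equation*}
Thus the quantity $F(L\by+\blambda)/L^2$ is a rational number that is an integer if and only if $L\mid H_{\blambda,L}(\by)$, and the equation $F(\bx)=0$ forces precisely this divisibility (since $L^2 F(\by)$ is already divisible by $L$).

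Finally, I would restrict the sum to those $\by\in\BZ^{n+1}$ with $L\mid H_{\blambda,L}(\by)$: outside this set the condition $F(L\by+\blambda)=0$ cannot hold anyway, so no solutions are lost, but for such $\by$ the value $F(L\by+\blambda)/L^2$ is an integer and the characteristic function of $\{F(\bx)=0\}$ can legitimately be written as $\delta\bigl(F(L\by+\blambda)/L^2\bigr)$. Collecting these observations yields \eqref{eq:N1}. There is no real obstacle here; the only point requiring a little care is to verify that the divisibility condition under the sum is the correct one, i.e.\ that $F(\bx)/L^2$ being the argument of $\delta$ is meaningful as an integer under the restriction $L\mid H_{\blambda,L}(\by)$, which is exactly what the Taylor expansion establishes.
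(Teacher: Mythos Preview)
Your proof is correct and follows exactly the same approach as the paper: the substitution $\bx = L\by + \blambda$, the quadratic Taylor expansion $F(L\by+\blambda)=F(\blambda)+L\nabla F(\blambda)\cdot\by+L^2F(\by)=LH_{\blambda,L}(\by)+L^2F(\by)$, and the observation that $L^2\mid F(L\by+\blambda)$ if and only if $L\mid H_{\blambda,L}(\by)$, so that the $\delta$-symbol is applied to an integer and no solutions are lost.
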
 
\begin{proof}
	We first observe that, if $\bx\equiv \boldsymbol{\lambda}\bmod L$, on writing $\bx=L\by+\boldsymbol{\lambda}$, we have $$F(\bx)=F(\boldsymbol{\lambda})+L\nabla F(\boldsymbol{\lambda})\cdot \by+L^2 F(\by)=LH_{\boldsymbol{\lambda},L}(\by)+L^2 F(\by).$$ We thus have $$L^2\mid F(L\by+\boldsymbol{\lambda})\Leftrightarrow L\mid H_{\boldsymbol{\lambda},L}(\by).$$ 	So  the expression above is well defined, and only captures $\by$ satisfying $F(L\by+\blambda)=0$.
\end{proof}

\begin{proof}[Proof of Proposition \ref{prop:deltacount}]

By Lemma \ref{le:deltacount} and \eqref{eq:delta}, the counting function $\CN_{\CW}(w_{B,R};(L,\bGamma))$ is equal to
\begin{equation}\label{eq:N2}
	\frac{C_Q}{Q^2}\sum_{q=1}^{\infty}\sum_{\substack{a\bmod q\\(a,q)=1}}\sum_{\substack{\by\in\BZ^{n+1}\\ L\mid H_{\boldsymbol{\lambda},L}(\by)}} w_{B,R}(L\by+\boldsymbol{\lambda})\textup{e}_{qL}\left(a\left(H_{\boldsymbol{\lambda},L}(\by)+LF(\by)\right)\right)h\left(\frac{q}{Q},\frac{F(L\by+\boldsymbol{\lambda})}{L^2Q^2}\right).
\end{equation}

	In order to apply Poisson summation and separate the real condition from the $p$-adic ones, we break the $\by$-sum into residue classes modulo $qL$, so that for every fixed $q,a$ and for every fixed class $\boldsymbol{\sigma}\bmod qL$ with $H_{\boldsymbol{\lambda},L}(\boldsymbol{\sigma})\equiv 0\mod L$, the inner $\by$-sum is equal to
	\begin{align*}
		&\textup{e}_{qL}\left(a\left(H_{\boldsymbol{\lambda},L}(\boldsymbol{\sigma})+LF(\boldsymbol{\sigma})\right)\right)\sum_{\substack{\by\in\BZ^{n+1}\\ \by\equiv \boldsymbol{\sigma}\bmod qL}}w_{B,R}(L\by+\boldsymbol{\lambda})h\left(\frac{q}{Q},\frac{F(L\by+\boldsymbol{\lambda})}{L^2Q^2}\right)\\ =& \textup{e}_{qL}\left(a\left(H_{\boldsymbol{\lambda},L}(\boldsymbol{\sigma})+LF(\boldsymbol{\sigma})\right)\right)	\sum_{\bz\in\BZ^{n+1}}w_{B,R}(L(\boldsymbol{\sigma}+qL\bz)+\boldsymbol{\lambda})h\left(\frac{q}{Q},\frac{F(L(\bsigma+qL\bz)+\boldsymbol{\lambda})}{L^2Q^2}\right).
	\end{align*}
	Applying the Poisson summation formula to the inner $\bz$-sum yields
	$$\sum_{\bc\in\BZ^{n+1}}\int_{\BR^{n+1}}w_{B,R}(L(\boldsymbol{\sigma}+qL\bz)+\boldsymbol{\lambda})h\left(\frac{q}{Q},\frac{F(L(\bsigma+qL\bz)+\boldsymbol{\lambda})}{L^2Q^2}\right)\e\left(-\bc\cdot\bz\right)\operatorname{d}\bz.$$ The inner integral, with the change of variables $\by=\bsigma+qL\bz$, is equal to
	$$\frac{\e_{qL}\left(\bc\cdot\bsigma\right)}{(qL)^{n+1}}I_{q,L,\blambda}(w_{B,R};\bc),$$  	
    where $I_{q,L,\blambda}(w_{B,R};\bc)$ is defined by \eqref{eq:Iqc1}.
    Hence the inner $\by$-sum in \eqref{eq:N2} is equal to $$\sum_{\substack{\bsigma\in(\BZ/qL\BZ)^{n+1}\\ H_{\boldsymbol{\lambda},L}(\boldsymbol{\sigma})\equiv 0\bmod L}}\sum_{\bc\in\BZ^{n+1}} \textup{e}_{qL}\left(a\left(H_{\boldsymbol{\lambda},L}(\boldsymbol{\sigma})+LF(\boldsymbol{\sigma})\right)+\bc\cdot\bsigma\right)\frac{I_{q,L,\blambda}(w_{B,R};\bc)}{(qL)^{n+1}}.$$
	Substituting into \eqref{eq:N2} yields the terms $S_{q,L,\blambda}(\bc)$ and we get the expression  \eqref{eq:Nwdelta}.
\end{proof}

	\subsection{Choice of $Q$}
	In terms of the zoom condition \eqref{real zoom condition in t}, we explain our choice of $Q$, keeping in mind its role in \eqref{eq:N1}, \eqref{eq:N2}  and \eqref{eq:Iqc1}.
 
	Recall from \cite[Theorem 1]{H-Bdelta} that $h(x,y)\neq 0$ only if $x\leqslant \max(1,2|y|)$.  We would like to have \begin{equation}\label{eq:chooseQ}
		\left|\frac{F(\bx)}{L^2}\right|\leqslant \frac{1}{2}Q^2,
	\end{equation} in order to deduce that $q\leqslant Q$. 
	We recall that $\bt=M \bx$, and by our choice of $M$ we have
	\begin{equation}\label{eq:tildeF}
		\widetilde{F}(\bt):=F(M^{-1}\bt)=t_0t_1+F_2(t_2,\ldots,t_n).
	\end{equation}
	Since $\widetilde{F}(\bt)=O(B^2/R^2)$ as prescribed by $w_2$, a reasonable choice for $Q$ is therefore
	\begin{equation}\label{eq:Q}
		Q=\frac{B}{LR},
	\end{equation} with which $$\frac{F(\bx)}{L^2Q^2}=\frac{R^2}{B^2}F(\bx)=O(1).$$
	
	\section{The oscillatory integrals}\label{se:oscint}
 The goal of this section is to manipulate the terms $I_{q,L,\blambda}(w_{B,R};\bc)$ \eqref{eq:Iqc1} and to deduce various estimates for them. We assume $n\geqslant 2$ throughout this section.
 \subsection{Preliminary manipulation}
	 We recall the choice of the matrix $M$ from Lemma \ref{change of variables} and $\widetilde{F}$ from \eqref{eq:tildeF}.
	The change of variables $$\bt=M\bx=M(L\by+\blambda)$$ yields
	\begin{equation}\label{eq:IqL2}
		I_{q,L,\blambda}(w_{B,R};\bc)=\frac{1}{|\det M| L^{n+1}}\e_{qL^2}\left(\bc\cdot \blambda\right) \CI_{q,L}(w_{B,R};\bc),
	\end{equation} where
	\begin{equation}\label{eq:IqL3}
		\CI_{q,L}(w_{B,R};\bc):=\int_{\BR^{n+1}}w_{B,R}(M^{-1}\bt)h\left(\frac{q}{Q},\frac{\widetilde{F}(\bt)}{L^2Q^2}\right)\e_{qL^2}\left(-\widetilde{\bc}\cdot\bt\right)\operatorname{d}\bt,
	\end{equation} $\widetilde{F}$ is defined by \eqref{eq:tildeF}, and, noting that $\bc\cdot\bx= \bc\cdot M^{-1}\bt=(M^{-1})^t \bc\cdot\bt$,
 \begin{equation}\label{eq:bctilde}
		\widetilde{\bc}:=(M^{-1})^t\bc=(\widetilde{c_0},\ldots,\widetilde{c_n}).
	\end{equation}
	is a real vector. 

	Let the new variables $\bs\in\BR^{n+1}$ be defined by $$s_0:=\frac{t_0}{B},\quad s_1:=\frac{R^2}{B}t_1,\quad s_j:=\frac{R}{B}t_j,\quad j\geqslant 2.$$
	Then we have $$\operatorname{d}\bt=\frac{B^{n+1}}{R^{n+1}}\operatorname{d}\bs.$$ Moreover, by our choice of the  matrix $M$,
	$$\frac{\widetilde{F}(\bt)}{L^2Q^2}=\frac{R^2}{B^2}\left(t_0t_1+F_2(t_2,\ldots,t_n)\right)=\widetilde{x_0}\widetilde{x_1}+F_2(\widetilde{x_2}, \ldots, \widetilde{x_n})=\widetilde{F}(\bs).$$

	We also define the real vector $\boldsymbol{d}=\boldsymbol{d}(B,R,L;\cc)$ by \begin{equation}\label{eq:d}
		d_0=\frac{B}{L^2}\widetilde{c_0},\quad d_1=\frac{B}{L^2R^2}\widetilde{c_1},\quad d_j=\frac{B}{L^2R}\widetilde{c_j},\quad 2\leqslant j\leqslant n.
	\end{equation}
	   Hence we can now rewrite
	\begin{equation}\label{eq:cihatci}
		\CI_{q,L}(w_{B,R};\bc)=\frac{B^{n+1}}{R^{n+1}}\widehat{\CI}_{q}(\widehat{w};\boldsymbol{d})
	\end{equation} where
	\begin{equation}\label{eq:Ihat}
		\widehat{\CI}_{q}(\widehat{w};\boldsymbol{d})=\int_{\BR^{n+1}} \widehat{w}(\bs)h\left(\frac{q}{Q},\widetilde{F}(\bs)\right)\e_q\left(-\bd\cdot\bs\right)\operatorname{d}\bs,
	\end{equation} with $$\widehat{w}(\bs):=w_1\left(\frac{s_j}{s_0},2\leqslant j\leqslant n\right)w_2\left(\widetilde{F}(\bs)\right)w_3\left(s_0\right)$$ depending on $w_1,w_2,w_3$.
	We recall that $$\bc\neq\boldsymbol{0}\Leftrightarrow \widetilde{\bc}\neq\boldsymbol{0}\Leftrightarrow \bd\neq\boldsymbol{0}.$$

	\subsection{Simpler and harder estimates}
 Let $\CH$ be the class of functions defined on $\BR_{>0}\times\BR\to\BC$ as in \cite[(7.1)]{H-Bdelta}. For $s\ll 1$, we define $$f_s(y)=sh(s,y),$$ and we note that $f_s(y)\in\CH$. 
	Let $\CF(x_0,\ldots,x_n)$ be a non-degenerate indefinite real quadratic form. For any real vector $\uu\in\BR^{n+1}$, and for an appropriate fixed weight function $w\in\mathcal{C}_0^\infty(\BR^{n+1})$, let
	\begin{equation}\label{eq:J}
		J_s(\CF;\uu)=\int_{\BR^{n+1}}w(\bt)f_s(\CF(\bt))e\left(- \uu\cdot\bt\right)\operatorname{d}\bt.
	\end{equation}
 The following bound follows from \cite[Lemma 15]{H-Bdelta}. For any $\uu \in \mathbb{R}^{n+1}$, we have
	\begin{equation}\label{eq:Jtrivial}
		J_s(\CF;\uu)\ll s.
	\end{equation}
Here, the implied constant may depend on the diameter of the support of $w$, on 
$$\|w\|_0 = \sup_{\bx \in \BR^{m}}|w(\bx)|,$$
and on a lower bound for $\frac{\partial \CF}{\partial x_0}$ on the support of $w$. 
These conditions on the implicit constant in (\ref{eq:Jtrivial}) are related to the introduction of the class of functions $\mathcal{C}_0(S)$ in \cite[$\S$6]{H-Bdelta}.

	The following two theorems are based on Heath-Brown's ``simpler" and ``harder" estimates \cite[Lemma 18, Lemma 22]{H-Bdelta}. However, in order to remove the smooth weights and ultimately prove Theorem \ref{main counting result}, we need to slightly refine Heath-Brown's results so that our estimates have an explicit dependence on $w$. This dependence is realised through the quantity $\|w\|_N$, as defined in (\ref{def partial derivative norm}). We do not reproduce a full proof of these results here, instead only highlighting the refinements to Heath-Brown's arguments. The two results below may also be compared with \cite[Proposition 5.1, Proposition 6.1]{Dymov}.
	
	The first estimate refines \cite[Lemma 18]{H-Bdelta}, which is a consequence of the ``first derivative test'' \cite[Lemma 10]{H-Bdelta} and a Fourier analysis of the function $h(\cdot,\cdot)$ \cite[Lemma 17]{H-Bdelta}:
	
	\begin{lemma}\label{HB simpler estimate}(``simpler estimates")
 For $N\geq 0$ and $\uu\neq 0$ we have
		$$J_s(\CF;\uu)\ll_N (s|\uu|)^{-N}\|w\|_N.$$
	\end{lemma}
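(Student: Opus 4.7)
The plan is to adapt the proof of Heath-Brown's \cite[Lemma 18]{H-Bdelta}, keeping careful track of how the implicit constant depends on $w$ through the norm $\|w\|_N$. The argument is repeated integration by parts in the variable corresponding to the largest component of $\uu$, exploiting the oscillation of the factor $\e(-\uu\cdot\bt)$.

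First, since $\uu\neq\boldsymbol{0}$, choose an index $i\in\{0,\dots,n\}$ with $|u_i|=\max_j|u_j|$, so that $|u_i|\gg|\uu|$. Using $\e(-\uu\cdot\bt)=-(2\pi i u_i)^{-1}\partial_{t_i}\e(-\uu\cdot\bt)$ and integrating by parts $N$ times in $t_i$, with boundary terms vanishing because $w$ is compactly supported, one obtains
\[
J_s(\CF;\uu)=(2\pi i u_i)^{-N}\int_{\BR^{n+1}}\partial_{t_i}^N\l[w(\bt)f_s(\CF(\bt))\r]\,\e(-\uu\cdot\bt)\,\op{d}\bt.
\]
Expanding by the Leibniz rule, the factors arising from derivatives of $w$ contribute at most $\|w\|_N$. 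For the factors coming from derivatives of $f_s(\CF(\bt))$, Faà di Bruno's formula applies: since $\CF$ is quadratic, all partial derivatives of $\CF$ of order $\geqslant 3$ vanish, so $\partial_{t_i}^k f_s(\CF(\bt))$ is a finite linear combination of terms $f_s^{(j)}(\CF(\bt))\,P_{k,j}(\bt)$ with $1\leqslant j\leqslant k$, where each $P_{k,j}$ is a polynomial in $\partial_{t_i}\CF$ and $\partial_{t_i}^2\CF$, and hence uniformly bounded on the compact support of $w$.

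The remaining input is the pointwise bound $|f_s^{(j)}(y)|\ll_j s^{-j}$, which follows from the standard estimates on $h$ in \cite[\S4]{H-Bdelta}: writing $f_s(y)=sh(s,y)$ and using $\partial_y^j h(s,y)\ll_j s^{-(j+1)}$, one gets $f_s^{(j)}\ll_j s^{-j}$. Combining these ingredients,
\[
\l|\partial_{t_i}^N\l[w(\bt)f_s(\CF(\bt))\r]\r|\ll_N \|w\|_N\,s^{-N}
\]
pointwise on $\op{supp}(w)$. Integrating over this compact set (whose measure, together with the bounds on $\partial\CF/\partial t_0$, is absorbed into the implicit constant in the conventions stated before \eqref{eq:Jtrivial}) and using $|u_i|\gg|\uu|$, one deduces
\[
J_s(\CF;\uu)\ll_N|u_i|^{-N}\|w\|_N\,s^{-N}\ll_N(s|\uu|)^{-N}\|w\|_N,
\]
as claimed.

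The only genuinely delicate point is ensuring that $\|f_s^{(j)}\|_\infty\ll_j s^{-j}$ holds with a constant independent of $s$, which is a standard property of Heath-Brown's function $h$ and does not need to be reproved here. The remainder of the argument is bookkeeping to make sure the dependence on $w$ enters only linearly in $\|w\|_N$; this linear dependence is exactly the refinement required later when removing the smooth weights.
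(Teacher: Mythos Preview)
Your argument is correct, but it takes a genuinely different route from the paper's. The paper follows Heath-Brown's original proof of \cite[Lemma 18]{H-Bdelta}: one first invokes \cite[Lemma 17]{H-Bdelta} to Fourier-expand $f_s$ in its second variable, reducing $J_s$ to integrals of the form $\int W(\bt)\,\e(k\CF(\bt)-\uu\cdot\bt)\,\op{d}\bt$ with a modified weight $W(\bt)=w(\bt)/w_0(\CF(\bt)/2K)$, and only then applies repeated integration by parts (the ``first derivative test'' \cite[Lemma 10]{H-Bdelta}); the refinement comes from checking $\|W\|_N\ll\|w\|_N$. You instead bypass Lemma 17 entirely, integrating by parts directly on $J_s$ with $w(\bt)f_s(\CF(\bt))$ as amplitude. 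This is more elementary for the present purpose --- the only analytic input about $h$ you need is the pointwise bound $\partial_y^j h(s,y)\ll_j s^{-(j+1)}$ for bounded $y$ --- whereas the paper's route has the advantage of plugging straight into Heath-Brown's existing framework and matching the setup for the subsequent ``harder estimate''. One minor remark: the derivative bound on $h$ you invoke is standard, but it sits closer to \cite[Lemma 5]{H-Bdelta} and the definition of the class $\CH$ in \cite[(7.1)]{H-Bdelta} than to \S4 itself, so you may wish to adjust the citation.
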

	
	\begin{proof}
		Let $w_0:\mathbb{R} \ra \mathbb{R}_{\geq 0}$ be the smooth bump function defined in \cite[Equation (2.1)]{H-Bdelta}. \cite[Lemma 18]{H-Bdelta} is proved by applying integration by parts $N$ times to the integral
		$$ \int_{\mathbb{R}^{n+1}} W(\bt)e(k\mathcal{F}(\bt)-\uu\cdot \bt) \operatorname{d}\bt,$$
		where
		$$ W(\bt) := \frac{w(\bt)}{w_0(\mathcal{F}(\bt)/2K)}$$
		and $K$ is such that $|\mathcal{F}(\bt)|\leq K$ on $\operatorname{supp}(w)$. Since $K=O(1)$, we have $\|W\|_N \ll \|w\|_N$, and hence integrating by parts $N$ times introduces a factor $O(\|w\|_N)$. 
	\end{proof}
	
	The second result is obtained from a more delicate analysis  of the locus where the ``first derivative test'' fails, which involves the Hessian.
	
	\begin{lemma}\label{HB harder estimate}(``harder estimates")
		Suppose that $1 \leq A \leq |\uu|^{1/3}$. Then 
		
		$$J_s(\CF;\uu)\ll_N A^{-N}\|w\|_N + sA^{n+1}|\uu|^{1-\frac{n+1}{2}}.$$
	\end{lemma}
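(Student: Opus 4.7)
The plan is to adapt Heath-Brown's proof of \cite[Lemma 22]{H-Bdelta}, modifying it only insofar as is necessary to exhibit an explicit dependence on the weight $w$ through $\|w\|_N$, in the same spirit as the refinement of the simpler estimates carried out above. As a preliminary reduction, I would absorb the factor $f_s(\CF(\bt))$ into an auxiliary bump by introducing $W(\bt) := w(\bt)/w_0(\CF(\bt)/2K)$ with $w_0$ the smooth bump of \cite[(2.1)]{H-Bdelta} and $K$ an upper bound for $|\CF|$ on $\operatorname{supp}(w)$. Since $K = O(1)$, one has $\|W\|_N \ll \|w\|_N$, and all subsequent manipulations can be performed with $W$ in place of $w$ without changing the shape of the final estimate.

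Next, I would split $J_s(\CF;\uu) = J_s^{(1)}(\CF;\uu) + J_s^{(2)}(\CF;\uu)$ via a smooth partition of unity on $\operatorname{supp}(W)$ separating the \emph{near-stationary} region, where $\nabla\CF(\bt)$ is close to the line spanned by $\uu$, from its complement. The partition is calibrated so that the near-stationary piece lies in a tube of diameter roughly $A|\uu|^{-1/2}$ around the stationary manifold of the phase $\CF(\bt) - \lambda \uu\cdot\bt$ for an appropriate $\lambda$, and so that derivatives of order $k$ of each cutoff are $O(A^k)$. The existence of such a partition uses only the non-degeneracy of $\CF$ (a fixed real quadratic form) and the hypothesis $A \leq |\uu|^{1/3}$, which guarantees that the quadratic Taylor expansion of $\CF$ at a stationary point dominates the cubic and higher correction terms on this tube.

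For the off-stationary piece $J_s^{(2)}$, I would apply the first-derivative test \cite[Lemma 10]{H-Bdelta} and integrate by parts $N$ times along the direction in which $\nabla\CF(\bt) - \lambda\uu$ is bounded away from zero. Each integration contributes one factor of the reciprocal of the gradient size (of order $A^{-1}$) together with derivatives of the cutoff and of $W$; after $N$ iterations one collects exactly $A^{-N}\|W\|_N \ll A^{-N}\|w\|_N$, giving the first term in the target estimate. For the near-stationary piece $J_s^{(1)}$, I would apply the trivial bound \eqref{eq:Jtrivial} adapted to the restricted region: the volume of the tube is $O(A^{n+1}|\uu|^{-(n+1)/2})$ by a change of variables in the Hessian basis of $\CF$, and inserting this directly into the argument underlying \eqref{eq:Jtrivial} yields a contribution of order $sA^{n+1}|\uu|^{1-(n+1)/2}$, matching the second term.

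The principal obstacle will be the bookkeeping in the integration-by-parts step: one must verify that the partition of unity and the repeated differentiations produce no hidden factors depending on finer features of $w$ beyond those tracked by $\|w\|_N$, and that the cross-derivatives of the cutoff do not spoil the clean $A^{-N}\|w\|_N$ combination. Once this is confirmed — and since the quadratic form $\CF$ is fixed so all geometric constants associated with its gradient and Hessian are absorbed into the implied constant — the remaining arithmetic is routine and follows Heath-Brown's original argument verbatim.
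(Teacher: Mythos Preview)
Your proposal is correct and follows essentially the same approach as the paper. Both rely on Heath-Brown's decomposition into ``good pairs'' (your off-stationary region), handled by $N$-fold integration by parts at a cost of $A^{-N}\|w\|_N$, and ``bad pairs'' (your near-stationary tube), estimated trivially via the Hessian volume bound without differentiating $w$; the paper's proof is simply a terser statement of this same refinement.
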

	
	\begin{proof}
		Similarly to the lemma above, Heath-Brown's ``good pairs" are dealt with using integration by parts $N$ times, to give a contribution of $O(A^{-N}\|w\|_N)$. For the ``bad pairs", the integral is estimated trivially, and no derivatives of $w$ are taken. Therefore, this part of the argument goes through unchanged, giving a contribution of $A^{n+1}|\uu|^{1-\frac{n+1}{2}}.$
	\end{proof}

 Lemmas \ref{HB simpler estimate} and \ref{HB harder estimate} furnish alternative ways to estimate the quantity $\widehat{\CI}_{q}(\widehat{w};\boldsymbol{d})$ in \eqref{eq:Ihat}.
	\begin{corollary}\label{co:simplehardest}
		Uniformly for all $q\in\BN,\bd\in\BR^{n+1}$, we have 

		$$\widehat{\CI}_{q}(\widehat{w};\boldsymbol{d})\begin{cases} \ll 1 &\quad \text{``Trivial estimate''};\\
			\ll_N \frac{Q}{q}\left(\frac{|\dd|}{Q}\right)^{-N}\|w\|_N &\quad \text{``Simpler estimate''}.
		\end{cases}$$
  Moreover, for any $1\leq A \leq |\bd /q|^{1/3}$, we have $$ \widehat{\CI}_{q}(\widehat{w};\boldsymbol{d})\ll_{\varepsilon,N} \frac{Q}{q} A^{-N}\|w\|_N + A^{n+1}\left|\frac{\bd}{q}\right|^{1-\frac{n+1}{2}}\quad \text{``Harder estimate''}.$$
	\end{corollary}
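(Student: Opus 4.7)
The plan is to recognise $\widehat{\CI}_q(\widehat{w};\bd)$ as a constant multiple of the oscillatory integral $J_s(\CF;\uu)$ defined in \eqref{eq:J}, and then to feed this identification into the three bounds already available for $J_s$: the trivial estimate \eqref{eq:Jtrivial}, the ``simpler estimate'' (Lemma \ref{HB simpler estimate}), and the ``harder estimate'' (Lemma \ref{HB harder estimate}). There is essentially no new analysis to do; the task is just a clean change of parameters together with a check of hypotheses.

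Writing $h(s,y)=s^{-1}f_s(y)$ and taking $s=q/Q$, $\uu=\bd/q$, $\CF=\widetilde F$, the $h$-factor in \eqref{eq:Ihat} becomes $(Q/q)\,f_{q/Q}(\widetilde F(\bs))$, so that
\begin{equation*}
\widehat{\CI}_q(\widehat{w};\bd) \;=\; \frac{Q}{q}\int_{\BR^{n+1}} \widehat{w}(\bs)\,f_{q/Q}(\widetilde F(\bs))\,\e(-\uu\cdot\bs)\,\op{d}\bs \;=\; \frac{Q}{q}\,J_{q/Q}\bigl(\widetilde F;\,\bd/q\bigr).
\end{equation*}
Applying \eqref{eq:Jtrivial} immediately gives $\widehat{\CI}_q(\widehat{w};\bd)\ll (Q/q)\cdot(q/Q)=1$, the trivial estimate. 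Applying Lemma \ref{HB simpler estimate}, and noting that $s|\uu|=|\bd|/Q$, yields
\begin{equation*}
\widehat{\CI}_q(\widehat{w};\bd) \;\ll_N\; \frac{Q}{q}\left(\frac{|\bd|}{Q}\right)^{-N}\|\widehat{w}\|_N,
\end{equation*}
which is the simpler estimate. Applying Lemma \ref{HB harder estimate} for any $1\leq A\leq |\bd/q|^{1/3}$ gives
\begin{equation*}
\widehat{\CI}_q(\widehat{w};\bd) \;\ll_N\; \frac{Q}{q}\,A^{-N}\|\widehat{w}\|_N + A^{n+1}\left|\frac{\bd}{q}\right|^{1-\frac{n+1}{2}},
\end{equation*}
which is the harder estimate. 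The $\|\widehat{w}\|_N$ appearing above agrees with the $\|w\|_N$ in the statement up to a harmless constant, since $\widehat{w}$ is obtained from the components $w_1,w_2,w_3$ by composition with a fixed ($B,R$-independent) linear isomorphism $\bt\leftrightarrow\bs$.

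The only step that is not purely formal is the verification that the pair $(\widehat w,\widetilde F)$ satisfies the implicit hypotheses of Heath-Brown's framework (non-degenerate indefinite form; smooth compactly supported weight with a partial of the form bounded below on its support). By Lemma \ref{change of variables}, $\widetilde F(\bs)=s_0 s_1+F_2(s_2,\ldots,s_n)$ is non-degenerate, and the hyperbolic plane $s_0 s_1$ makes it indefinite regardless of the signature of $F_2$. The weight $\widehat w$ is smooth with compact support because the factor $w_3(s_0)$ — in particular when $w_3=w_3^{\pm}$ as constructed in Lemma \ref{existence of smooth weights} — keeps $|s_0|$ both bounded above and bounded away from $0$ on $\mathrm{supp}(\widehat w)$, which simultaneously avoids the apparent singularity of $w_1(s_j/s_0,\,j\ge 2)$ and guarantees $|\partial_{s_1}\widetilde F|=|s_0|\gg 1$, supplying the lower bound on a partial of $\widetilde F$ required to place $\widehat w$ in the class $\mathcal{C}_0(S)$ of \cite{H-Bdelta}. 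With these observations in place, the three bounds of the corollary follow directly from the three results cited.
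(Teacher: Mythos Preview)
Your proof is correct and follows essentially the same approach as the paper: identify $\widehat{\CI}_q(\widehat{w};\bd)=\frac{Q}{q}J_{q/Q}(\widetilde F;\bd/q)$ via $f_s(y)=sh(s,y)$, then invoke \eqref{eq:Jtrivial}, Lemma~\ref{HB simpler estimate}, and Lemma~\ref{HB harder estimate} with $s=q/Q$, $\uu=\bd/q$. Your additional paragraph verifying that $(\widehat w,\widetilde F)$ lies in the admissible class (indefiniteness, compact support away from $s_0=0$, and $|\partial_{s_1}\widetilde F|=|s_0|\gg 1$ on $\mathrm{supp}(\widehat w)$) is a welcome elaboration the paper leaves implicit; note only that $\widehat w$ is in fact defined directly from $w_1,w_2,w_3$ with no residual $B,R$-dependence, so the comparison of $\|\widehat w\|_N$ with $\|w\|_N$ is immediate rather than via a linear change of variables.
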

	
	\begin{proof}
		On recalling \eqref{eq:Ihat}, we clearly have
		\begin{equation}\label{eq:IJ}
			\begin{split}
				\widehat{\CI}_{q}(\widehat{w};\boldsymbol{d})&=\int_{\BR^{n+1}} \widehat{w}(\bs)h\left(\frac{q}{Q},\widetilde{F}(\bs)\right)\e\left(- \frac{\bd}{q}\cdot\bs\right)\operatorname{d}\bs\\ & \ll \frac{Q}{q}\left|J_{\frac{q}{Q}}\left(\widetilde{F};\frac{\bd}{q}\right)\right|.
			\end{split}
		\end{equation}
		The first estimate is just \eqref{eq:Jtrivial}. Since $F$ (and hence $\widetilde{F}$) is non-degenerate, the Hessian hypothesis is satisfied.  Corollary \ref{co:simplehardest} follows from \eqref{eq:IJ} and Lemmas \ref{HB simpler estimate} and \ref{HB harder estimate} with $$s=\frac{q}{Q},\quad \bu=\frac{\boldsymbol{d}}{q}.\qedhere$$
	\end{proof}
 
	\section{Quadratic exponential sums}\label{se:quadexpsum}
	Analogously to \cite[Lemma 28]{H-Bdelta}, the aim of this section is to the prove the following ``Kloosterman refinement'' result. 
	\begin{theorem}\label{thm:sqsum}
		Assume that $n\geqslant 2$. We have
		$$S_{q,L,\blambda}(\bc)\ll_\varepsilon L^n q^{\frac{n+3}{2}+\varepsilon}\gcd(q,L)^\frac{1}{2},$$
			$$\sum_{q\leqslant X} \left|S_{q,L,\blambda}(\bc)\right|\ll_\varepsilon \begin{cases}
				|\bc|^\varepsilon L^{n+\varepsilon} X^{\frac{n+4}{2}+\varepsilon} &\text{ if } F^*(\bc)\neq 0;\\  L^{n+\varepsilon} X^{\frac{n+5}{2}+\varepsilon}&\text{ if } F^*(\bc)= 0.
			\end{cases}$$
If moreover $n$ is even and $F^*(\bc)=0$, we also have
			$$\sum_{q\leqslant X} \left|S_{q,L,\blambda}(\bc)\right|\ll_\varepsilon L^{n+\varepsilon}X^{\frac{n+4}{2}+\varepsilon}. $$
   All the implied constants above are independent of $q,\bc,L$ and $\blambda$.
	\end{theorem}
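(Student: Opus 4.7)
The plan is to adapt Heath-Brown's analysis of the singular series in \cite[\S9--\S11]{H-Bdelta}, with substantial bookkeeping to accommodate the congruence datum $(L,\blambda)$ and to maintain uniformity at all primes, including those dividing $\gcd(L,\Delta_F)$. First, I would uniquely decompose every $\bsigma\in(\BZ/qL\BZ)^{n+1}$ as $\bsigma=\bsigma_0+L\btau$ with $\bsigma_0\in(\BZ/L\BZ)^{n+1}$ and $\btau\in(\BZ/q\BZ)^{n+1}$; this bijection persists even when $\gcd(q,L)>1$. The linear constraint $H_{\blambda,L}(\bsigma)\equiv0\pmod L$ then involves only $\bsigma_0$, and the hypothesis $\bGamma\in\CW^o(\BZ/L\BZ)$ (i.e.\ $\gcd(\blambda,L)=1$) keeps the count of admissible $\bsigma_0$ bounded by $O(L^{n+\varepsilon})$. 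Using $F(\bsigma_0+L\btau)=F(\bsigma_0)+L\nabla F(\bsigma_0)\cdot\btau+L^2F(\btau)$ together with $H_{\blambda,L}(\bsigma_0)/L\in\BZ$, the sum rewrites as
\begin{equation*}
S_{q,L,\blambda}(\bc)=\sum_{\bsigma_0}\e_{qL}(\bc\cdot\bsigma_0)\sum_{(a,q)=1}\e_q\bigl(a\Phi(\bsigma_0)\bigr)\,T_q(a,\bsigma_0;\bc),
\end{equation*}
where $\Phi(\bsigma_0)=H_{\blambda,L}(\bsigma_0)/L+F(\bsigma_0)$ and the inner sum
\begin{equation*}
T_q(a,\bsigma_0;\bc)=\sum_{\btau\bmod q}\e_q\bigl(aL^2F(\btau)+\bb\cdot\btau\bigr),\qquad \bb=a\nabla F(\blambda)+aL\nabla F(\bsigma_0)+\bc,
\end{equation*}
is a classical quadratic Gauss sum over $(\BZ/q\BZ)^{n+1}$.

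For the pointwise bound, I would apply CRT to $T_q$ to decompose it into prime-power factors. At primes $p\nmid 2L\Delta_F$ with $p^\alpha\Vert q$, the quadratic form $aL^2F$ is non-degenerate modulo $p^\alpha$, and completing the square yields the standard Gauss-sum bound $O(p^{\alpha(n+1)/2})$. At primes $p\mid L$, the matrix of $aL^2F$ is degenerate to a degree controlled by $\min(\alpha,2\operatorname{ord}_p(L))$, and a careful adaptation of Heath-Brown's local analysis bounds the local factor with a loss of at most $p^{\alpha/2}$. Multiplying the local contributions gives $T_q\ll q^{(n+1)/2+\varepsilon}\gcd(q,L)^{1/2}$ uniformly in $a,\bsigma_0,\bc$; summing trivially over $a$ coprime to $q$ (contributing a factor $q$) and over the $O(L^{n+\varepsilon})$ admissible $\bsigma_0$ then yields the claimed pointwise bound.

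For the averaged bounds, I would interchange the inner $a$- and $\btau$-summations and apply a Kloosterman refinement: after completing the square in $\btau$ (with care for the singular part of the modulus), the $a$-sum reduces to a Kloosterman-type or Sali\'e-type sum, to which Weil's bound supplies an extra square-root cancellation. The quadratic argument of this $a$-sum is essentially $F^*(\bc)$ modulo $q$, so when $F^*(\bc)\neq0$ the full saving of $q^{1/2}$ is obtained, yielding $X^{(n+4)/2+\varepsilon}$ after summing over $q\leq X$. When $F^*(\bc)=0$ this argument vanishes and only the weaker $X^{(n+5)/2+\varepsilon}$ is recovered generically. The case $n$ even allows an additional saving: the parity of $n+1$ changes the precise form of the quadratic Gauss factor so that the subsequent $a$-sum enjoys an extra square-root cancellation coming from a Legendre character over the $a$-variable, restoring $X^{(n+4)/2+\varepsilon}$ even when $F^*(\bc)=0$.

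The main obstacle is maintaining uniformity in $L$, $\blambda$, and $\bc$ through the local analysis at bad primes dividing $\gcd(qL,2\Delta_F)$, where the standard Gauss-sum toolkit degrades. Adapting the case analysis of Heath-Brown's Lemma 26 to the present setting---in particular handling simultaneously the twist by $\bsigma_0$ and the degenerate leading coefficient $aL^2$, while exploiting $\gcd(\blambda,L)=1$ to control $\nabla F(\blambda)\bmod p$---and verifying that the losses never exceed the tolerated factors $\gcd(q,L)^{1/2}$ and $L^\varepsilon$, will be the most delicate part of the argument.
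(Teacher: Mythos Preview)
Your additive splitting $\bsigma=\bsigma_0+L\btau$ is valid, but the claimed uniform bound $T_q\ll q^{(n+1)/2+\varepsilon}\gcd(q,L)^{1/2}$ is false, and this breaks the pointwise estimate. At a prime $p$ with $p^\alpha\Vert q$, $p^\beta\Vert L$ and $\alpha\le 2\beta$, the quadratic part $aL^2F(\btau)$ vanishes modulo $p^\alpha$, so the local factor of $T_q$ is a pure linear sum: it equals $p^{\alpha(n+1)}$ when $p^\alpha\mid\bb$ and $0$ otherwise. For $n\ge 2$ the non-zero value $p^{\alpha(n+1)}$ exceeds your asserted bound $p^{\alpha(n+1)/2}\gcd(p^\alpha,p^\beta)^{1/2}\le p^{\alpha(n+2)/2}$ by a factor of $p^{\alpha n/2}$. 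The compensating saving lives in the \emph{constraint} $p^\alpha\mid\bb$, which restricts $a$ and $\bsigma_0$; but you then sum trivially over both, so the saving is lost. The same obstruction reappears in your Kloosterman refinement: completing the square in $\btau$ requires inverting $aL^2\CM_F$ modulo $q$, which is impossible at primes dividing $\gcd(q,L)$, so the $a$-sum cannot be reduced to a clean Kloosterman/Sali\'e sum in the way you describe.

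The paper avoids this by a \emph{multiplicative} CRT splitting $q=q_1q_2$ with $\gcd(q_1,2L\Delta_F)=1$ and $q_2\mid(2L\Delta_F)^\infty$, obtaining $S_{q,L,\blambda}(\bc)=S^{(1)}S^{(2)}$. Over $q_1$ the form is genuinely non-degenerate, so a change of variables reduces $S^{(1)}$ exactly to Heath-Brown's $T_{q_1}$ and yields the explicit formula that isolates $F^*(\bc)$ and drives the Kloosterman saving (including the parity gain when $n$ is even). The factor $S^{(2)}$, which carries all the $L$-dependence and all bad primes, is bounded not by Gauss-sum evaluation but by \emph{Cauchy--Schwarz}: one squares, opens the sum, and counts the resulting congruence constraints, crucially using $\gcd(\blambda,L)=1$ to show that $\gcd(\nabla F(\blambda),L)=O(1)$. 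This is the missing device in your plan; without it (or an equivalent mechanism that exploits the vanishing of $T_q$ to restrict $a$ and $\bsigma_0$ rather than summing trivially), the argument does not close.
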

	Theorem \ref{thm:sqsum} is concerned with
	estimating the average over $q$ of the absolute value of the finite exponential sums $S_{q,L,\blambda}(\bc)$ from \eqref{eq:Sqc}. As already pointed out in \cite{H-Bdelta}, no ``double Kloosterman refinement'' is needed when $n\geqslant 4$ (while it becomes essential for $n=3$ in \cite{PART2}).
	
	To prove Theorem \ref{thm:sqsum}, we decompose $S_{q,L,\blambda}(\bc)$ into an exponential sum over ``good moduli'' and another over ``bad moduli'', based on the Chinese remainder theorem. For the former we give precise formula, while for the latter we obtain a uniform upper bound. 
	
	\subsection{Decomposing $S_{q,L,\blambda}(\bc)$} For every fixed $q\geqslant 1$, we have a decomposition
 \begin{equation}\label{eq:qdecomp}
		q=q_1q_2,
	\end{equation} where $$\gcd(q_1,2L\Delta_F)=1,\quad q_2\mid (2L\Delta_F)^\infty.$$
	Since $\gcd(q_1, q_2) =1$, by the Chinese remainder theorem, every $a \bmod q$ can be written as $$a=a_1q_2+a_2q_1,$$ for a unique choice of $a_1\bmod q_1,a_2\bmod q_2$. Similarly, for every $\bsigma\in (\BZ/qL\BZ)^{n+1}$ we can write $$\bsigma=q_1\bsigma_2+q_2L \bsigma_1$$ for unique $\bsigma_1\bmod q_1$ and $\bsigma_2\bmod q_2L$. If we let $$k:=\frac{F(\blambda)}{L},$$ then we also write $$k=k_2q_1+k_1q_2L,$$ with $k_1\bmod q_1$ and $k_2\bmod q_2L$.
	With these decompositions we obtain the following lemma. 
	\begin{lemma}\label{le:sqdecomp}
	We have	$$S_{q,L,\blambda}(\bc)=S^{(1)}_{q,L,\blambda}(\bc)S^{(2)}_{q,L,\blambda}(\bc),$$
	where \begin{equation}\label{eq:S1}
		S^{(1)}_{q,L,\blambda}(\bc):=\sum_{\bsigma_1\in (\BZ/q_1\BZ)^{n+1}}\sum_{\substack{a_1\bmod q_1\\(a_1,q_1)=1}}\e_{q_1}\left(a_1\left((q_2L)^2F(\bsigma_1)+q_2(\nabla F(\blambda)\cdot \bsigma_1+k_1)\right)+\bc\cdot\bsigma_1\right),
	\end{equation} \begin{equation}\label{eq:S2}
		S^{(2)}_{q,L,\blambda}(\bc):=\sum_{\substack{\bsigma_2\in (\BZ/q_2L\BZ)^{n+1}\\ H_{\blambda,L}(q_1\bsigma_2)\equiv 0\bmod L}}\sum_{\substack{a_2\bmod q_2\\(a_2,q_2)=1}}\e_{q_2L}\left(a_2\left(q_1^2L F(\bsigma_2)+q_1(\nabla F(\blambda)\cdot \bsigma_2+k_2)\right)+\bc\cdot\bsigma_2\right).
	\end{equation} 	\end{lemma}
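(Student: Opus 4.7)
The strategy is a direct application of the Chinese Remainder Theorem, made possible by the coprimality $\gcd(q_1, q_2 L) = 1$ built into the decomposition $q = q_1 q_2$. First I would verify that the proposed parametrisations are bijections. The map $(a_1, a_2) \mapsto a_1 q_2 + a_2 q_1$ identifies $(\BZ/q_1\BZ)^* \times (\BZ/q_2\BZ)^*$ with $\{a \bmod q : \gcd(a,q)=1\}$, because $a \equiv a_1 q_2 \bmod q_1$ and $a \equiv a_2 q_1 \bmod q_2$, and multiplication by $q_2$ (resp.\ $q_1$) is invertible modulo $q_1$ (resp.\ $q_2$). Similarly, $(\bsigma_1, \bsigma_2) \mapsto q_1 \bsigma_2 + q_2 L \bsigma_1$ gives a bijection $(\BZ/q_1\BZ)^{n+1} \times (\BZ/q_2 L\BZ)^{n+1} \to (\BZ/qL\BZ)^{n+1}$ via the isomorphism $\BZ/qL\BZ \cong \BZ/q_1\BZ \times \BZ/q_2 L\BZ$.

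I would then check how the auxiliary constraint $H_{\blambda,L}(\bsigma) \equiv 0 \bmod L$ transports. Since $\bsigma \equiv q_1 \bsigma_2 \bmod L$, this condition involves only $\bsigma_2$, and therefore enters exclusively the factor $S^{(2)}$ as the stated constraint $H_{\blambda,L}(q_1 \bsigma_2) \equiv 0 \bmod L$; the variable $\bsigma_1$ in $S^{(1)}$ is unconstrained.

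The core of the proof is the factorisation of $\e_{qL}(Y)$, where $Y := a\bigl(H_{\blambda,L}(\bsigma) + LF(\bsigma)\bigr) + \bc \cdot \bsigma$. The CRT identity reads $\e_{qL}(Y) = \e_{q_1}(\overline{q_2 L}\, Y)\, \e_{q_2 L}(\overline{q_1}\, Y)$ for any integer $Y$. Reducing $Y$ modulo $q_1$ via the substitutions $a \equiv a_1 q_2$, $\bsigma \equiv q_2 L \bsigma_1$, $k \equiv k_1 q_2 L$ and the identity $F(q_2 L \bsigma_1) = (q_2 L)^2 F(\bsigma_1)$ yields
$$Y \equiv q_2 L \cdot \Bigl( a_1\bigl[(q_2 L)^2 F(\bsigma_1) + q_2(\nabla F(\blambda)\cdot \bsigma_1 + k_1)\bigr] + \bc\cdot \bsigma_1 \Bigr) \bmod q_1.$$
Multiplying through by $\overline{q_2 L}$ kills the prefactor $q_2 L$ modulo $q_1$, producing exactly the exponential argument of $S^{(1)}$. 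A parallel computation modulo $q_2 L$ uses the expansion $F(q_1 \bsigma_2 + q_2 L \bsigma_1) = q_1^2 F(\bsigma_2) + q_1 q_2 L\, G(\bsigma_1,\bsigma_2) + (q_2L)^2 F(\bsigma_1)$, in which the bilinear cross term carries a factor $q_1 q_2 L$ that vanishes modulo $q_2 L$; combined with $k \equiv k_2 q_1 \bmod q_2 L$ and $a \equiv a_2 q_1 \bmod q_2 L$, absorbing $\overline{q_1}$ gives precisely the argument of $S^{(2)}$.

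Finally, the mod-$q_1$ argument depends only on $(a_1, \bsigma_1)$ and the mod-$q_2 L$ argument only on $(a_2, \bsigma_2)$, so the double sum factors as $S^{(1)}_{q,L,\blambda}(\bc)\, S^{(2)}_{q,L,\blambda}(\bc)$. The main obstacle here is purely organisational: keeping careful track of the two CRT inverses $\overline{q_2 L}$ and $\overline{q_1}$ and verifying that each prefactor produced by the reductions of $Y$ cancels against the appropriate inverse. No analytic input is required beyond the CRT; the substantive work of estimating $S^{(1)}$ and $S^{(2)}$ separately is the object of the subsequent subsections.
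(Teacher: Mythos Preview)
Your proposal is correct and follows exactly the same strategy as the paper's proof, which simply notes that the congruence constraint depends only on $\bsigma_2$ and then says ``the decomposition can then be obtained by direct computation''; you have supplied that direct computation in detail. One minor imprecision: the claim ``$a\equiv a_2 q_1 \bmod q_2 L$'' is not literally true since $a$ is only defined modulo $q$, but what you actually need and use is $aM\equiv a_2 q_1 M \bmod q_2 L$, which holds because $L\mid M$ under the constraint $H_{\blambda,L}(\bsigma)\equiv 0\bmod L$.
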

\begin{proof}
 We observe that the condition $L\mid H_{\boldsymbol{\lambda},L}(q_1\bsigma_2)$, namely $$k_2q_1+q_1\nabla F(\blambda)\cdot\bsigma_2\equiv 0\bmod L,$$ is equivalent to $$k_2q_1+q_1\nabla F(\blambda)\cdot\bsigma_2\equiv 0\bmod q_1 L,$$  because  $\gcd(q_1,L)=1$.  The decomposition can then be obtained by direct computation.
\end{proof}

 \subsection{Evaluating $S^{(1)}_{q,L,\blambda}(\bc)$}	We start by proving a general version of Kloosterman--Salié-type sums, which is a direct analogue of \cite[Lemma 26]{H-Bdelta}. 
 
 Let $G$ be an integral quadratic form in $(n+1)$-variables with matrix $\CM_G$, and let $\Delta_G:=\det\CM_G$. That is, $\CM_G$ is a symmetric $(n+1)\times(n+1)$-matrix such that $2\CM_G$ has integer entries and $G(\bx)=\bx^t \CM_G\cdot\bx$.

 Let $G^*$ be the dual quadratic form associated to $G$, which is defined by the adjugate $\CM_G^{\operatorname{adj}}:=(\Delta_G) \CM_G^{-1}$ of $\CM_G$. Note that $2^{n+1}\Delta_G\in\BZ$, and $2^n\CM_G^{\operatorname{adj}}$ has integer entries.
	\begin{lemma}\label{le:Tq}
	  Let $G$ be as above and let $m\in\BZ,\bc\in\BZ^{n+1}$ and $q\in\BN$. Let us define
		$$T_q(G,m;\bc):=\sum_{\substack{a\bmod q\\(a,q)=1}}\sum_{\bb\in(\BZ/q\BZ)^{n+1}}\e_q\left(a(G(\bb)-m)+\bc\cdot\bb\right).$$ Assume that $G$ is non-degenerate and $\gcd(q,2\Delta_G)=1$.
		Then we have
		$$T_q(G,m;\bc)=\left(\iota_q\left(\frac{2}{q}\right)\right)^{n+1}\left(\frac{2^{n+1}\Delta_G}{q}\right) \sum_{\substack{a\bmod q\\(a,q)=1}}\left(\frac{a}{q}\right)^{n+1}\e_q\left(-am-\overline{a2^{n+2}\Delta_G}2^nG^*(\bc)\right),$$ where for any $d\bmod q$ with $(d,q)=1$, we write $\overline{d}\bmod q$ for the multiplicative inverse, and $$\iota_q:=\sum_{x\bmod q}\e_q(x^2)$$ is the quadratic Gauss sum of modulus $q$.
	\end{lemma}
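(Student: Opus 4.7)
The plan is to prove Lemma \ref{le:Tq} by the standard completing-the-square technique for quadratic Gauss sums, taking care of the powers of $2$ that arise because $\CM_G$ may have half-integer entries while $2\CM_G$ is integral. Throughout, the hypothesis $\gcd(q,2\Delta_G)=1$ guarantees that $2$, $\Delta_G$, and the matrix $\CM_G$ are all invertible modulo $q$.

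First I would fix $a$ with $\gcd(a,q)=1$ and focus on the inner sum
$$\Sigma_a := \sum_{\bb\in(\BZ/q\BZ)^{n+1}}\e_q\bigl(a\,\bb^t\CM_G\bb+\bc\cdot\bb\bigr).$$
Setting $\bb=\by+\bb_0$ with $\bb_0 := -\overline{2a}\,\CM_G^{-1}\bc \pmod q$ (well-defined by the coprimality assumption), the cross term $2a\,\bb_0^t\CM_G\by+\bc\cdot\by$ cancels. A short computation, using the symmetry of $\CM_G$ and the identity $\CM_G^{-1}\equiv\overline{\Delta_G}\,\CM_G^{\operatorname{adj}}\pmod q$, gives the constant
$$aG(\bb_0)+\bc\cdot\bb_0 \equiv -\overline{4a\Delta_G}\,\bc^t\CM_G^{\operatorname{adj}}\bc \equiv -\overline{a\,2^{n+2}\Delta_G}\cdot 2^n G^*(\bc) \pmod q,$$
where the second equality is the clean rewriting $\overline{4a\Delta_G}=\overline{a\,2^{n+2}\Delta_G}\cdot 2^n \pmod q$.

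Next I would evaluate the remaining pure quadratic Gauss sum $\sum_{\by}\e_q(aG(\by))$ by diagonalising $G$ modulo $q$. Since $\CM_G$ is symmetric and invertible modulo $q$ with $q$ odd, we may find an invertible $P$ over $\BZ/q\BZ$ with $P^t\CM_G P \equiv \operatorname{diag}(d_0,\ldots,d_n)\pmod q$ and $\prod d_i\equiv \Delta_G\cdot(\det P)^2\pmod q$. Then the sum splits as a product of one-variable Gauss sums, each evaluating to $\bigl(\tfrac{ad_i}{q}\bigr)\iota_q$. Collecting, and using that $(\det P)^2$ is a square modulo $q$, this gives
$$\sum_{\by}\e_q(aG(\by)) = \iota_q^{n+1}\Bigl(\tfrac{a}{q}\Bigr)^{\!n+1}\Bigl(\tfrac{\Delta_G}{q}\Bigr).$$
The identity $\bigl(\tfrac{\Delta_G}{q}\bigr) = \bigl(\tfrac{2^{n+1}\Delta_G}{q}\bigr)\bigl(\tfrac{2}{q}\bigr)^{n+1}$ (valid because $\Delta_G$ is interpreted modulo $q$ via the integer $2^{n+1}\Delta_G$) brings this into the shape required by the statement.

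Finally, I would reassemble: substituting both pieces back into $T_q(G,m;\bc)=\sum_a\e_q(-am)\Sigma_a$, pulling the $a$-independent factors outside the $a$-sum, and summing over $a\bmod q$ with $\gcd(a,q)=1$ yields exactly the claimed formula. The main obstacle I anticipate is bookkeeping the factors of $2$: one must be careful that $G^*(\bc)=\bc^t\CM_G^{\operatorname{adj}}\bc$ is a half-integer in general while $2^n G^*(\bc)$ is integral, and that the scalar $\overline{a\,2^{n+2}\Delta_G}$ is well-defined modulo $q$. Matching the precise exponents in the final formula — in particular verifying the identity $\overline{4a\Delta_G}\,G^*(\bc)\equiv \overline{a\,2^{n+2}\Delta_G}\cdot 2^n G^*(\bc)\pmod q$ and the Jacobi symbol rearrangement — is the most delicate bookkeeping but poses no genuine difficulty beyond care.
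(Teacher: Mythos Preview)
Your proof is correct and follows essentially the same strategy as the paper: complete the square and diagonalize the quadratic form modulo $q$ to reduce to a product of one-variable Gauss sums. The only cosmetic difference is the order of operations --- the paper diagonalizes $2\CM_G$ first and then completes the square coordinate-by-coordinate, whereas you complete the square globally and then diagonalize $\CM_G$ --- but the substance and the bookkeeping of the factors of $2$ are the same.
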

 Let us deduce
 \begin{proposition}\label{prop:S1}
		We have
		$$S^{(1)}_{q,L,\blambda}(\bc)=\iota_{q_1}^{n+1}\left(\frac{2^{n+1}\Delta_F^n}{q_1}\right)\e_{q_1}\left(-\overline{q_2L^2}\blambda\cdot \bc\right)\sum_{\substack{a\bmod q_1\\(a,q_1)=1}}\left(\frac{a}{q_1}\right)^{n+1}\e_{q_1}\left(-a2^nF^*(\bc)\right).$$
	\end{proposition}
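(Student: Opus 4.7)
\textbf{Proof plan for Proposition \ref{prop:S1}.} The plan is to reduce $S^{(1)}_{q,L,\blambda}(\bc)$ to a sum of the form $T_{q_1}(F,0;\bc)$ (times a pure phase factor depending on $\blambda$) and then invoke Lemma \ref{le:Tq}. Since $\gcd(q_1,2L\Delta_F)=1$ and $q_2\mid (2L\Delta_F)^\infty$, the integer $q_2L$ is a unit modulo $q_1$, and likewise $\CM_F$ is invertible modulo $q_1$; these are the two inversions that we will use repeatedly.

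The first step is to complete the square. In \eqref{eq:S1} the inner expression is
\[
a_1\bigl((q_2L)^2F(\bsigma_1)+q_2\nabla F(\blambda)\cdot\bsigma_1+q_2k_1\bigr)+\bc\cdot\bsigma_1,
\]
where the $\bc\cdot\bsigma_1$ term is the only one not divisible by $a_1$. I make the substitution $\bsigma_1=\bb+\bmu$ modulo $q_1$, choosing $\bmu$ so that the linear-in-$\bsigma_1$ contribution coming from $(q_2L)^2F(\bsigma_1)$ cancels the term $q_2\nabla F(\blambda)\cdot\bsigma_1$. Writing $\nabla F(\bv)=2\CM_F\bv$, this requires $(q_2L)^2\CM_F\bmu\equiv -q_2\CM_F\blambda\pmod{q_1}$, and multiplying by $\CM_F^{-1}\overline{q_2}$ yields
\[
\bmu\equiv -\overline{q_2L^2}\,\blambda\pmod{q_1}.
\]

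The second step is to verify that the $a_1$-dependent constant terms cancel. Using $F(\blambda)=Lk$ and $k\equiv k_1q_2L\pmod{q_1}$, one computes $(q_2L)^2F(\bmu)\equiv k_1q_2$ and $q_2\nabla F(\blambda)\cdot\bmu\equiv -2k_1q_2$ modulo $q_1$, so the three terms $a_1(q_2L)^2F(\bmu)$, $a_1q_2\nabla F(\blambda)\cdot\bmu$ and $a_1q_2k_1$ sum to zero modulo $q_1$. The only surviving shift contribution is the linear piece $\bc\cdot\bmu\equiv -\overline{q_2L^2}\,\blambda\cdot\bc$, which factors out of the sum and produces the $\bxi$-independent phase claimed in the statement.

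The third step is the reindexing. After the shift, the remaining double sum is
\[
\sum_{\bb\in(\BZ/q_1\BZ)^{n+1}}\sum_{\substack{a_1\bmod q_1\\(a_1,q_1)=1}}\e_{q_1}\bigl(a_1(q_2L)^2F(\bb)+\bc\cdot\bb\bigr).
\]
Replacing the unit $a_1$ by $a=a_1(q_2L)^2\bmod q_1$ is a bijection on $(\BZ/q_1\BZ)^\times$, and turns this into $T_{q_1}(F,0;\bc)$ in the notation of Lemma \ref{le:Tq}.

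The fourth step is to apply Lemma \ref{le:Tq} with $G=F$, $m=0$, $q=q_1$, and to simplify the resulting Jacobi symbols. Substituting $a\mapsto\overline{a\,2^{n+2}\Delta_F}$ inside the remaining Salié-type sum removes the inversion in the exponential, at the cost of multiplying the Jacobi prefactor by $\bigl(\tfrac{2^{n+2}\Delta_F}{q_1}\bigr)^{n+1}$. Combining this with the original prefactor $\bigl(\iota_{q_1}(\tfrac{2}{q_1})\bigr)^{n+1}\bigl(\tfrac{2^{n+1}\Delta_F}{q_1}\bigr)$ and using $\bigl(\tfrac{2}{q_1}\bigr)^2=1$ collapses all even powers of $\bigl(\tfrac{2}{q_1}\bigr)$ and $\bigl(\tfrac{\Delta_F}{q_1}\bigr)$, leaving exactly $\iota_{q_1}^{n+1}\bigl(\tfrac{2^{n+1}\Delta_F^n}{q_1}\bigr)$. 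The only mild obstacle is this bookkeeping of Jacobi symbols, for which the parity of $n$ must be tracked carefully; but since $(n+1)(n+4)$ is always even and $\bigl(\tfrac{\Delta_F}{q_1}\bigr)^{n+2}=\bigl(\tfrac{\Delta_F}{q_1}\bigr)^n$, everything collapses cleanly to the claimed form.
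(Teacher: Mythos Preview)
Your proposal is correct and follows essentially the same approach as the paper. The only cosmetic difference is in how the reduction to $T_{q_1}$ is carried out: the paper first replaces $a_1$ by $a_1\overline{L^2}$ and then makes the affine substitution $\bb:=q_2L^2\bsigma_1+\blambda$ (a combined dilation and shift), landing on $T_{q_1}(F,0;\overline{q_2L^2}\bc)$, whereas you perform a pure shift $\bsigma_1=\bb+\bmu$ with $\bmu\equiv-\overline{q_2L^2}\blambda$ and then rescale $a_1\mapsto a_1(q_2L)^2$, landing directly on $T_{q_1}(F,0;\bc)$. Since $(q_2L^2)^2$ is a square, its Jacobi symbol is trivial and the two endpoints coincide after the final substitution in Lemma~\ref{le:Tq}; the remaining Jacobi--symbol bookkeeping is identical.
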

	\begin{proof}[Proof of Proposition \ref{prop:S1}]
		The change of variables for the $a_1$-sum $a_1\mapsto a_1':=a_1\overline{L^2}$ gives
		$$S^{(1)}_{q,L,\blambda}(\bc)=\sum_{\bsigma_1\in (\BZ/q_1\BZ)^{n+1}}\sum_{\substack{a_1'\bmod q_1\\(a_1',q_1)=1}}\e_{q_1}\left(a_1' F(q_2L^2\bsigma_1+\blambda)+\bc\cdot\bsigma_1\right).$$
		The second non-singular change of variables for the $\bsigma_1$-sum $\bsigma_1\mapsto \bb:=q_2L^2\bsigma_1+\blambda$ gives, with the notation of Lemma \ref{le:Tq},
		$$S^{(1)}_{q,L,\blambda}(\bc)=\e_{q_1}\left(-\overline{q_2L^2}\bc\cdot\blambda\right)T_{q_1}(F,0;\overline{q_2L^2}\bc).$$ Applying Lemma \ref{le:Tq} and the final change of variables $a\mapsto \overline{a2^{n+2}\Delta_F(q_2L^2)^2}$ directly yields the desired formula.
	\end{proof}
	\begin{corollary}\label{co:s1qbound}
		Uniformly for all $q,\bc,L,\blambda$, we have
		$$S^{(1)}_{q,L,\blambda}(\bc)\ll q_1^{\frac{n+2}{2}}\tau(q_1)\gcd(q_1,2^nF^*(\bc))^\frac{1}{2}.$$
		When $n$ is even and $F^*(\bc)=0$, we have
		$$\left|S^{(1)}_{q,L,\blambda}(\bc)\right|=\begin{cases}
			q_1^{\frac{n+1}{2}}\phi(q_1) &\text{ if } q_1=\square;\\ 0 &\text{ otherwise}.
		\end{cases}$$
	\end{corollary}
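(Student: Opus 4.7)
The plan is to start from the explicit formula for $S^{(1)}_{q,L,\blambda}(\bc)$ given in Proposition \ref{prop:S1}, take absolute values factor by factor, and then analyse the remaining character sum over $a\bmod q_1$ according to the parity of $n+1$.

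Set $m:=2^nF^*(\bc)$. In the formula of Proposition \ref{prop:S1}, the Jacobi symbol $\bigl(\tfrac{2^{n+1}\Delta_F^n}{q_1}\bigr)$ and the exponential factor $\e_{q_1}(-\overline{q_2L^2}\blambda\cdot\bc)$ have modulus at most $1$, and since $\gcd(q_1,2\Delta_F)=1$ is odd, the standard evaluation of the quadratic Gauss sum gives $|\iota_{q_1}|=q_1^{1/2}$, hence $|\iota_{q_1}^{n+1}|=q_1^{(n+1)/2}$. It therefore suffices to bound
\[
T_{q_1}(m):=\sum_{\substack{a\bmod q_1\\(a,q_1)=1}}\left(\tfrac{a}{q_1}\right)^{n+1}\e_{q_1}(-am).
\]

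When $n$ is odd, $(a/q_1)^{n+1}=1$ on units, so $T_{q_1}(m)$ is the Ramanujan sum $c_{q_1}(-m)$ and satisfies $|T_{q_1}(m)|\le\gcd(q_1,m)$. Trivially then $|S^{(1)}_{q,L,\blambda}(\bc)|\le q_1^{(n+1)/2}\gcd(q_1,m)\le q_1^{(n+2)/2}\gcd(q_1,m)^{1/2}$, which is already stronger than what is claimed. When $n$ is even, $(a/q_1)^{n+1}=(a/q_1)$, and $T_{q_1}(m)$ is a Sali\'e-type sum. Factoring $q_1=\prod p^{e}$ via the Chinese remainder theorem reduces the estimation to prime-power sums $T_{p^e}(m')=\sum_{(a,p)=1}(a/p^e)\e_{p^e}(-am')$; these can be evaluated explicitly (via completion of the square when $(m',p)=1$ giving a Gauss sum of modulus $p^{e/2}$, and handled directly when $p\mid m'$), yielding $|T_{p^e}(m')|\le 2p^{e/2}\gcd(p^e,m')^{1/2}$. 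Reassembling gives $|T_{q_1}(m)|\ll\tau(q_1)q_1^{1/2}\gcd(q_1,m)^{1/2}$, and multiplying by $q_1^{(n+1)/2}$ yields the uniform bound of the corollary. Combining both parities covers all $n\geqslant 2$.

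For the second statement, we set $F^*(\bc)=0$ so that $m=0$, and we must evaluate $T_{q_1}(0)=\sum_{(a,q_1)=1}(a/q_1)$ when $n$ is even. By the Chinese remainder theorem this factors as $\prod_{p^e\|q_1}\sum_{(a,p)=1,\,a\bmod p^e}(a/p^e)$. For odd prime $p$, the $p$-adic factor equals $p^{e-1}\sum_{a=1}^{p-1}(a/p)=0$ if $e$ is odd (since $\sum_{a=1}^{p-1}(a/p)=0$), while if $e$ is even then $(a/p^e)=(a/p)^e=1$ on units so the factor equals $\phi(p^e)$. Thus $T_{q_1}(0)=\phi(q_1)$ if every $p$-adic valuation of $q_1$ is even, i.e.\ $q_1$ is a perfect square, and $T_{q_1}(0)=0$ otherwise. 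Using $|\iota_{q_1}^{n+1}|=q_1^{(n+1)/2}$ and noting that all other factors in Proposition \ref{prop:S1} have modulus $1$, we conclude $|S^{(1)}_{q,L,\blambda}(\bc)|=q_1^{(n+1)/2}\phi(q_1)$ when $q_1=\square$ and $0$ otherwise.

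The main obstacle is the Sali\'e-type estimate in the $n$ even case; everything else is either a direct bound ($|\iota_{q_1}|$, Ramanujan sum) or a careful application of multiplicativity of the Jacobi symbol.
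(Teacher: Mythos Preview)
Your proof is correct and follows essentially the same route as the paper: take absolute values in Proposition \ref{prop:S1}, use $|\iota_{q_1}|=q_1^{1/2}$, reduce to the character sum $T_{q_1}(m)$, and split according to the parity of $n$ into a Ramanujan sum or a quadratic Gauss-type sum. The only cosmetic difference is that the paper cites Weil's bound \cite[Corollary 11.12]{I-K} for the $n$ even case, whereas you sketch the prime-power evaluation by hand; the resulting estimate $|T_{q_1}(m)|\ll \tau(q_1)q_1^{1/2}\gcd(q_1,m)^{1/2}$ and the treatment of the $F^*(\bc)=0$ case are identical.
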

	\begin{proof}[Proof of Corollary \ref{co:s1qbound}]
	Since $2\nmid q_1$, using \cite[(3.38)]{I-K}, we have $|\iota_{q_1}|=q_1^\frac{1}{2}$. Then clearly $$|S^{(1)}_{q,L,\blambda}(\bc)|=q_1^{\frac{n+1}{2}} \left|\sum_{\substack{a\bmod q_1\\(a,q_1)=1}}\left(\frac{a}{q_1}\right)^{n+1}\e_{q_1}\left(-a2^nF^*(\bc)\right)\right|.$$
 When $n$ is odd (resp. even), the inner exponential sum is a Ramanujan sum (resp. Gauss sum), which can both be bounded by $$\ll \tau(q_1)q_1^\frac{1}{2}\gcd(q_1,2^nF^*(\bc))^\frac{1}{2}$$ by multiplicativity and by Weil's bound (see \cite[Corollary 11.12]{I-K}).
		
		Now assume that $n$ is even and $F^*(\bc)=0$. Then the inner sum is a character sum (recall that $2\nmid q_1$):
		$$\sum_{a\bmod q_1}\left(\frac{a}{q_1}\right)=\begin{cases}
		0 & \text{ if } q_1\neq\square;\\ \phi(q_1) &\text{ otherwise}.
		\end{cases}$$ (See also \cite[Proof of Lemma 26]{H-Bdelta}.)
	\end{proof}

	\begin{proof}[Proof of Lemma \ref{le:Tq}]
	We write $\CM:=2\CM_G$ and we fix a matrix $R\in\operatorname{GL}_{n+1}(\BZ/q\BZ)$ such that $$R^t \CM R=\operatorname{diag}(\beta_1,\ldots,\beta_{n+1}),$$ with $\gcd(\beta_i,q)=1$. Then the change of variables $\bb\mapsto \bx:=R^{-1}\bb$ yields 
		\begin{align*}
			T_q(G,m;\bc)&=\sum_{\substack{a\bmod q\\(a,q)=1}}\e_q(-am)\sum_{\bx\in(\BZ/q\BZ)^{n+1}}\e_q\left(\overline{2}a\sum_{i=1}^{n+1}\beta_i x_i^2+\bc\cdot R\bx\right)\\ &=\sum_{\substack{a\bmod q\\(a,q)=1}}\e_q(-am)\prod_{i=1}^{n+1}\sum_{x\bmod q}\e_q\left(\overline{2}a\beta_i x^2+d_{i}x\right),
		\end{align*}
		where we have written $$\bd=(d_1,\ldots,d_{n+1}):=R^t\bc.$$
		For every fixed $i$ and $a\bmod q$ with $(a,q)=1$, \begin{align*}
			&\sum_{x\bmod q}\e_q\left(\overline{2}a\beta_i x^2+d_i x\right)\\
			=& \e_q\left(-\overline{2a\beta_i}d_i^2\right)\sum_{x\bmod q}\e_q\left(\overline{2}a\beta_i\left(x+\overline{a\beta_i}d_i\right)^2\right)\\ =& \e_q\left(-\overline{2 a\beta_i}d_i^2\right)\left(\frac{\overline{2}a\beta_i}{q}\right)\iota_q,
		\end{align*} where for the last equality we have executed the change of variables $x\mapsto x+\overline{a\beta_i}d_i$ and made use of \cite[(3.38)]{I-K}.
		Notice that modulo $q$ we have \begin{align*}
			\sum_{i=1}^{n+1}\overline{\beta_i}d_i^2&\equiv \bc^t R\operatorname{diag}(\overline{\beta_1},\ldots,\overline{\beta_{n+1}})\cdot R^t\bc\\ &\equiv \overline{\det \CM}\bc^t RR^{-1}\CM^{\operatorname{adj}} (R^{-1})^t\cdot R^t\bc\\&\equiv \overline{\det \CM}\bc^t \CM^{\operatorname{adj}}\cdot \bc\\            &=\overline{2^{n+1}\Delta_G}2^n G^*(\bc).
		\end{align*}
		Hence 
		\begin{align*}
\prod_{i=1}^{n+1}\e_q\left(-\overline{2 a\beta_i}d_i^2\right)\left(\frac{\overline{2}a\beta_i}{q}\right)&=\e_q\left(-\overline{2a}\sum_{i=1}^{n+1}\overline{\beta_i}d_i^2 \right)\left(\frac{\overline{2}a}{q}\right)^{n+1}\left(\frac{\prod_{i=1}^{n+1}\beta_i}{q}\right)\\ &=\e_q\left(-\overline{a2^{n+2}\Delta_G}2^nG^*(\bc)\right)\left(\frac{2a}{q}\right)^{n+1}\left(\frac{2^{n+1}\Delta_G}{q}\right).
		\end{align*}  We therefore conclude the desired formula.
	\end{proof}

\subsection{Estimating $S^{(2)}_{q, L, \blambda}(\bc)$}
	\begin{proposition}\label{prop:S2}
		Assume that $\blambda=(\lambda_0,\cdots,\lambda_n)\in \mathbb{Z}^{n+1}$ such that $\gcd(\lambda_0,\ldots, \lambda_n,L)=1$. Then, uniformly for all $q,\bc,L,\blambda$, we have
		$$S^{(2)}_{q,L,\blambda}(\bc)\ll q_2^{\frac{n+3}{2}}L^n\gcd(q_2,L)^\frac{1}{2}.$$
	\end{proposition}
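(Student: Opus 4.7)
The plan is to simplify the inner $a_2$-sum using the congruence condition, then bound the resulting expression by localising via the Chinese Remainder Theorem and completing the square at each prime.

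First, since $\gcd(q_1, L) = 1$, the condition $L \mid H_{\blambda,L}(q_1 \bsigma_2)$ in \eqref{eq:S2} is equivalent to $\nabla F(\blambda) \cdot \bsigma_2 + k_2 \equiv 0 \pmod L$. For such $\bsigma_2$ one may write $\mu(\bsigma_2) := L^{-1}(\nabla F(\blambda)\cdot \bsigma_2 + k_2) \in \BZ$, and the exponent inside $\e_{q_2 L}$ in \eqref{eq:S2} simplifies so that the $a_2$-sum collapses to a Ramanujan sum of the form $c_{q_2}\bigl(q_1(q_1 F(\bsigma_2) + \mu(\bsigma_2))\bigr)$. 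This rewrites $S^{(2)}_{q,L,\blambda}(\bc)$ as a weighted exponential sum over $\bsigma_2 \bmod q_2 L$ subject to one linear congruence modulo $L$.

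Since $q_2 \mid (2L\Delta_F)^\infty$, only primes $p \mid 2L\Delta_F$ are involved. Setting $p^{u_p} := p^{v_p(q_2)}$ and $p^{w_p} := p^{v_p(L)}$, one factors $S^{(2)}_{q,L,\blambda}(\bc)$ via CRT into local sums modulo $p^{u_p+w_p}$. The target bound is likewise multiplicative, so it suffices to bound each local factor by $p^{u_p(n+3)/2 + w_p n + \min(u_p,w_p)/2}$. For $p \nmid 2\Delta_F$, the non-degeneracy of $F$ modulo $p$ together with the hypothesis $\gcd(\blambda, L) = 1$ allow one to complete the square in $\bsigma_2$ modulo $p^{u_p}$, reducing the quadratic part to a product of quadratic Gauss sums of size $p^{u_p(n+1)/2}$. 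Combined with the Ramanujan factor bounded by $p^{u_p}$ and the $p^{w_p n}$ solutions of the linear congruence modulo $p^{w_p}$, this yields the desired estimate, and the residual $p^{\min(u_p,w_p)/2}$ reflects the interaction between the quadratic and linear parts when $p$ divides both $q_2$ and $L$.

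The principal obstacle will be the treatment of primes $p \mid 2\Delta_F$: there $F$ may degenerate modulo $p$, so completing the square is not directly available, and moreover $\nabla F(\blambda)$ may be $p$-singular for some $\blambda$ even when $\gcd(\blambda, L) = 1$. The strategy is to estimate such local factors by cruder methods (e.g.\ bounding the Ramanujan factor and $a_2$-sum trivially and exploiting only the non-degeneracy of $F$ away from a bounded set of primes) and absorb the resulting loss into the implied constant, since the set of bad primes is controlled by $\Delta_F$ alone. The subtle point throughout is keeping the bound fully uniform in $\blambda$, $q$ and $\bc$; this requires careful bookkeeping of the $p$-adic valuations of $\nabla F(\blambda)$ against those of $L$ and $q_2$, using only the coprimality hypothesis $\gcd(\lambda_0, \ldots, \lambda_n, L) = 1$.
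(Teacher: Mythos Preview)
Your reduction of the inner $a_2$-sum to a Ramanujan sum is correct, and the CRT factorisation of $S^{(2)}_{q,L,\blambda}(\bc)$ over primes of $q_2 L$ is valid. However, there are two genuine gaps.

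First, your treatment of primes $p\mid 2\Delta_F$ is flawed. You propose to absorb the loss at such primes into the implied constant ``since the set of bad primes is controlled by $\Delta_F$ alone.'' But while the \emph{set} of such primes is fixed, the exponent $u_p=\operatorname{ord}_p(q_2)$ is unbounded as $q$ varies (nothing prevents $q_2$ from being a large power of a single bad prime). Bounding the Ramanujan factor by $p^{u_p}$ and the $\bsigma_2$-sum trivially gives at best $p^{u_p(n+2)+w_p n}$, which exceeds the local target $p^{u_p(n+3)/2+w_p n+\min(u_p,w_p)/2}$ by a factor of roughly $p^{u_p(n+1)/2}$. This loss cannot be absorbed into a constant depending only on $F$. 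To repair this you would need a genuine square-root cancellation argument that works even when $F$ is degenerate modulo $p$, for instance Cauchy--Schwarz in the $a$-variable as in \cite[Lemma~25]{H-Bdelta}; but once you invoke that, you have essentially reverted to the paper's method.

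Second, your accounting at good primes is not consistent. After writing $\bsigma_2=\bsigma_0+p^{w_p}\btau$ with $\bsigma_0$ running over the $O(p^{w_p n})$ solutions of the linear congruence modulo $p^{w_p}$ and $\btau$ over $(\BZ/p^{u_p}\BZ)^{n+1}$, the quadratic part of the exponential in $\btau$ is $e_{p^{u_p}}(a q_1'^2 p^{2w_p}F(\btau))$. When $2w_p\geq u_p$ this vanishes and the $\btau$-sum is a pure linear character sum, equal to $0$ or $p^{u_p(n+1)}$ with no square-root saving; when $2w_p<u_p$ the effective quadratic modulus is only $p^{u_p-2w_p}$, not $p^{u_p}$. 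Your product ``Gauss sum $\times$ Ramanujan $\times$ linear solutions'' does not account for this dichotomy, nor for the extra cancellation that must come from the $\bsigma_0$- or $a$-sum to reach the stated exponent. Making this precise requires substantially more work than indicated.

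The paper's proof sidesteps both difficulties by applying Cauchy--Schwarz directly to the $a_2$-sum, \emph{without} localising. Opening the square yields a bilinear sum in two copies $\bnu_1,\bnu_2$ of $\bsigma_2$; summing one of them via additive characters produces a divisibility condition that drastically restricts the remaining variables, and the bound follows from lattice-point counting. This argument never uses non-degeneracy of $F$ modulo any prime, so the bad primes cause no additional complication.
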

	Proposition \ref{prop:S2} is a direct analogue of \cite[Lemma 25]{H-Bdelta}, from which our proof is also inspired. The main challenge here is that we need to not only get a best ``square-root cancellation'' at the level $q_2$, but also to optimize the dependency on $L$.
	\begin{proof}
		By the Cauchy--Schwarz inequality, we have that $$\left|S^{(2)}_{q,L,\blambda}(\bc)\right|^2\leqslant\phi(q_2)\sum_{\substack{a_2\bmod q_2\\(a_2,q_2)=1}} \CS_{q,L}(a_2,\bc),$$ where for each fixed $a_2$, we let 
		\begin{align*}
			& \CS_{q,L}(a_2,\bc):=\left|\sum_{\substack{\bsigma_2\in (\BZ/q_2L\BZ)^{n+1}\\ H_{\blambda,L}(q_1\bsigma_2)\equiv 0\bmod L}}\e_{q_2L}\left(a_2\left(q_1^2L F(\bsigma_2)+q_1(\nabla F(\blambda)\cdot \bsigma_2+k_2)\right)+\bc\cdot\bsigma_2\right)\right|^2 \\ = &\sum_{\substack{\bnu_1,\bnu_2\bmod q_2L\\ H_{\blambda,L}(q_1\bnu_i)\equiv 0\bmod L}}\e_{q_2L} \left(a_2\left(q_1^2L(F(\bnu_1)-F(\bnu_2))+q_1\nabla F(\blambda)\cdot(\bnu_1-\bnu_2)\right)+\bc\cdot(\bnu_1-\bnu_2)\right)\\ = & \sum_{\substack{\bnu_0:\nabla F(\blambda)\cdot\bnu_0 \equiv0\bmod L\\ \bnu_1:H_{\blambda,L}(q_1\bnu_1)\equiv 0\bmod L}}\e_{q_2L} \left(a_2\left(q_1^2L(\nabla F(\bnu_0)\cdot \bnu_1-F(\bnu_0))+q_1\nabla F(\blambda)\cdot \bnu_0\right)+\bc\cdot\bnu_0\right),
		\end{align*} where for the last equality we have set $\bnu_0=\bnu_1-\bnu_2$.
		
		We proceed by fixing $a_2,\bnu_0$ and first sum over $\bnu_1$. For this purpose we rewrite 
		$$\CS_{q,L}(a_2,\bc)=\sum_{\substack{\bnu_0\bmod q_2L\\ \nabla F(\blambda)\cdot\bnu_0 \equiv0\bmod L}}\e_{q_2L} \left(a_2q_1\left(-q_1 L F(\bnu_0)+\nabla F(\blambda)\cdot \bnu_0\right)+\bc\cdot\bnu_0\right) T_{q,L}(a_2,\bnu_0),$$
		where for each fixed $\bnu_0$, we write $$T_{q,L}(a_2,\bnu_0):=\sum_{\bnu_1:H_{\blambda,L}(q_1\bnu_1)\equiv 0\bmod L}\e_{q_2}\left(a_2q_1^2\nabla F(\bnu_0)\cdot \bnu_1\right).$$
		The condition $H_{\blambda,L}(q_1\bnu_1)\equiv 0\mod L$ can be resolved by adding an additive character sum of modulus $L$, so that the $\bnu_1$-variable ranges over all vectors modulo $q_2L$:
		\begin{align*}
			T_{q,L}(a_2,\bnu_0)&=\sum_{\bnu_1\bmod q_2L}\frac{1}{L}\sum_{l\bmod L}\e_L\left(l H_{\blambda,L}(q_1\bnu_1)\right)\e_{q_2}\left(a_2q_1^2\nabla F(\bnu_0)\cdot \bnu_1\right)\\ &=\frac{1}{L}\sum_{l\bmod L}\e_L(lk)\CT_{q,L}(a_2,l,\bnu_0),
		\end{align*} where for every fixed $l$, we let
		\begin{equation*}
			\begin{split}
				\CT_{q,L}(a_2,l,\bnu_0)&:=\sum_{\bnu_1\bmod q_2L}\e_{q_2L}\left(\left(a_2q_1^2L\nabla F(\bnu_0)+q_1q_2l\nabla F(\blambda)\right)\cdot \bnu_1\right)\\ &=\begin{cases}
					(q_2L)^{n+1} &\text{ if } q_2L\mid a_2q_1^2L\nabla F(\bnu_0)+q_1q_2l\nabla F(\blambda);\\ 0&\text{ otherwise}.
				\end{cases}
			\end{split}
		\end{equation*}
		
		We are now reduced to counting the number of $\bnu_0$ and $l$ such that $\CT_{q,L}(a_2,l,\bnu_0)\neq 0$. Write \begin{equation}\label{eq:decompnu0}
			\bnu_0=\lcm(q_2,L)\balpha_1+\balpha_2,
		\end{equation} with $\balpha_1\bmod \gcd(q_2,L)$ and $\balpha_2\bmod \lcm(q_2,L)$. Then the number of $\bnu_0$ is entirely determined by the number of $\balpha_1$ and that of $\balpha_2$. Note that the divisibility condition above is equivalent to \begin{equation}\label{eq:conddivl}
			\frac{q_2L}{\gcd(q_2,L)}\mid a_2q_1^2\frac{L}{\gcd(q_2,L)}\nabla F(\balpha_2)+\frac{q_2}{\gcd(q_2,L)}q_1l\nabla F(\blambda).
		\end{equation}
		So $\CT_{q,L}(a_2,l,\bnu_0)$ only depends on $\balpha_2$ and we may write $$\CT_{q,L}(a_2,l,\bnu_0)=\CT_{q,L}(a_2,l,\balpha_2).$$
		On the other hand, the condition $\nabla F(\blambda)\cdot\bnu_0 \equiv0\bmod L$ is equivalent to \begin{equation}\label{eq:cond2prime} 
			\nabla F(\blambda)\cdot\balpha_2 \equiv0\bmod L.
		\end{equation}
		The conditions \eqref{eq:conddivl} and \eqref{eq:cond2prime} are only concerned with $l$ and $\balpha_2$. 
		
		With the decomposition \eqref{eq:decompnu0}, we have, modulo $q_2L$,
		\begin{multline*}
			a_2q_1\left(-q_1 L F(\bnu_0)+\nabla F(\blambda)\cdot \bnu_0\right)+\bc\cdot\bnu_0\\ \equiv -a_2q_1^2 L F(\balpha_2)+(a_2q_1\nabla F(\blambda)+\bc)\cdot \balpha_2+ (a_2q_1\nabla F(\blambda)+\bc)\cdot\lcm(q_2,L)\balpha_1.
		\end{multline*}
	Concerning $\balpha_1$, if $a_2$ satisfies the condition \begin{equation}\label{eq:conda2}
		\gcd(q_2,L)\mid a_2q_1\nabla F(\blambda)+\bc,
	\end{equation} then \begin{align*}
		&\sum_{\balpha_1\bmod \gcd(q_2,L)}\e_{q_2L}\left((a_2q_1\nabla F(\blambda)+\bc)\cdot\lcm(q_2,L)\balpha_1\right)\\ =&\sum_{\balpha_1\bmod \gcd(q_2,L)}\e_{\gcd(q_2,L)}\left((a_2q_1\nabla F(\blambda)+\bc)\cdot\balpha_1\right)=
			\gcd(q_2,L)^{n+1},
	\end{align*} and we can write  
		$$\CS_{q,L}(a_2,\bc)=\frac{1}{L}(q_2L)^{n+1}\gcd(q_2,L)^{n+1}\sum_{l\bmod L}\e_L(lk)U_{q,L}(a_2,l)$$ where for every fixed $a_2$ and $l$, \begin{align*}
			U_{q,L}(a_2,l)&:=\sum_{\substack{\balpha_2\bmod \lcm(q_2,L)\\ \eqref{eq:conddivl}\text{ and } \eqref{eq:cond2prime} \text{ hold}}}\e_{q_2L}\left(-a_2q_1^2 L F(\balpha_2)+(a_2q_1\nabla F(\blambda)+\bc)\cdot \balpha_2\right).
		\end{align*} Otherwise $\CS_{q,L}(a_2,\bc)=0$ if \eqref{eq:conda2} is not satisfied.

	We recall that $\CM_F$ denotes the matrix representation of $F$, $\CM_F^{\operatorname{adj}}$ denotes its adjugate. 
	Then if  $a\in\BN$ is such that $a\mid L^\infty$ and $a\mid \nabla F(\blambda)=2\CM_F\blambda$, on multiplying with $2^n\CM_F^{\operatorname{adj}}$, this implies that $a\mid 2^{n+1}\Delta_F \blambda$, whence $a=O(1)$, where the implicit constant only depends on the discriminant of $F$. 
	
		The condition \eqref{eq:conddivl} implies that $l$ satisfies  
	$\frac{L}{\gcd(q_2,L)}\mid \frac{q_1q_2}{\gcd(q_2,L)}l\nabla F(\blambda)$, itself equivalent to \begin{equation}\label{eq:condl}
		\frac{L}{\gcd(q_2,L)}\mid l\nabla F(\blambda),
	\end{equation} since $\gcd(\frac{L}{\gcd(q_2,L)}, \frac{q_1q_2}{\gcd(q_2,L)})=1$.
	So, on writing $$l=\frac{L}{\gcd(q_2,L)}l'+l'',$$ with $l'\bmod \gcd(q_2,L)$ and $l''\bmod\frac{L}{\gcd(q_2,L)}$, then \eqref{eq:condl} is equivalent to $	\frac{L}{\gcd(q_2,L)}\mid l''\nabla F(\blambda)$ and hence determines $l''$ up to $O(1)$.
	Hence $$\#\{l\bmod L:\eqref{eq:condl} \text{ holds}\}=O(\gcd(q_2,L)).$$

	We next estimate trivially $U_{q,L}(a_2,l)$ by counting the number of $\balpha_2$ satisfying \eqref{eq:conddivl} and \eqref{eq:cond2prime}.
Note that \eqref{eq:conddivl} is equivalent to $$ \nabla F(\balpha_2)\equiv -\overline{a_2q_1}\frac{q_2}{\gcd(q_2,L)}\left(\frac{L}{\gcd(q_2,L)}\right)^{-1}l\nabla F(\blambda)\bmod q_2.$$ On multiplying with $2^n\CM_F^{\operatorname{adj}}$, this implies that, once $a_2$ and $l$ are chosen, there are $O(1)$ choices for the residues of the vector $\balpha_2$  modulo $q_2$  satisfying \eqref{eq:conddivl}.  Again since the coordinates of $\blambda$
have a greatest common divisor which is coprime to $L$, we can do an invertible linear change of variables 
$\balpha_2\mapsto \balpha_2'\mod L$ 
so that the first coordinate say $\alpha_{2,1}'$ of  $\balpha_2^\prime \bmod L$ is  $\balpha_2\cdot\blambda^\prime$ where $\blambda'\in\BZ^{n+1}$ also has coprime coordinates away from $L$ and is defined by $\nabla F(\blambda)=b\blambda'$ with certain $b=O(1)$. Then the condition \eqref{eq:cond2prime} determines $\alpha_{2,1}'\bmod L$ up to $O(1)$, and if moreover $l$ is also fixed, besides the $O(1)$ choices for $\balpha_2'\bmod q_2$, in addition there are only $O(1)$ choices for one of coordinates of $\balpha_2'\bmod \lcm(q_2,L)$ by the Chinese remainder theorem. To summarise, when $a_2$ and $l$ are fixed and satisfy \eqref{eq:conda2} and \eqref{eq:condl} respectively,  $$\#\{\balpha_2\bmod \lcm(q_2,L): \eqref{eq:conddivl} \text{ and } \eqref{eq:cond2prime} \text{ hold}\}\ll\left(\frac{\lcm(q_2,L)}{q_2}\right)^n= \left(\frac{L}{\gcd(q_2,L)}\right)^n.$$
		Therefore 
		\begin{align*}
				\CS_{q,L}(a_2,\bc)&\leqslant\frac{1}{L}(q_2L)^{n+1}\gcd(q_2,L)^{n+1}\sum_{\substack{l\bmod L\\ \eqref{eq:condl} \text{ holds}}}\sum_{\substack{\balpha_2\bmod \lcm(q_2,L)\\\eqref{eq:conddivl}\text{ and } \eqref{eq:cond2prime} \text{ hold}}}1\\ &\ll q_2^{n+1}L^{2n}\gcd(q_2,L)^2.
		\end{align*}

		We decompose $$a_2=\gcd(q_2,L)a_2'+a_2''$$ with $a_2'\bmod \frac{q_2}{\gcd(q_2,L)}$ and $a_2''\bmod \gcd(q_2,L)$. Then the condition \eqref{eq:conda2} determines $a_2''$ up to $O(1)$ by the same argument applied to $l$ before. Hence
		$$\#\{a_2\bmod q_2: \eqref{eq:conda2} \text{ holds}\}=O\left(\frac{q_2}{\gcd(q_2,L)}\right).$$ We finally get 	$$\left|S^{(2)}_{q,L,\blambda}(\bc)\right|^2\ll\phi(q_2)\sum_{\substack{a_2\bmod q_2,(a_2,q_2)=1\\ \eqref{eq:conda2}\text{ holds}}} \CS_{q,L}(a_2,\bc)\ll q_2^{n+3}L^{2n}\gcd(q_2,L),$$ 
	 whence the desired bound for $S^{(2)}_{q,L,\blambda}(\bc)$.
	\end{proof}
	\subsection{Proof of Theorem \ref{thm:sqsum}}
		The uniform estimate follows from Corollary \ref{co:s1qbound} and Proposition \ref{prop:S2}, on recalling \eqref{eq:qdecomp} and Lemma \eqref{le:sqdecomp}.
		
		Now assume that $F^*(\bc)\neq 0$. Then by Corollary \ref{co:s1qbound} and Proposition \ref{prop:S2}, we have
		\begin{align*}
			S_{q,L,\blambda}(\bc)\ll_\varepsilon L^n q^{\frac{n+2}{2}+\varepsilon} \gcd(q_1,2^n F^*(\bc))^\frac{1}{2}q_2^\frac{1}{2}\gcd(q_2,L)^\frac{1}{2},
		\end{align*} where we recall the decomposition \eqref{eq:qdecomp}.
		We therefore have
		\begin{align*}
			\sum_{q\leqslant X}\left|S_{q,L,\blambda}(\bc)\right|&\ll_\varepsilon L^n \sum_{T\nearrow X}T^{\frac{n+2}{2}+\varepsilon}\sum_{\substack{q_2\leqslant T\\ q_2\mid (2L\Delta_F)^\infty}} q_2^\frac{1}{2}\gcd(q_2,L)^\frac{1}{2}\sum_{\substack{q_1\sim  \frac{T}{q_2}\\ (q_1,2L\Delta_F)=1}}\gcd(q_1,2^n F^*(\bc))^\frac{1}{2}\\ &\ll_\varepsilon L^n \sum_{T\nearrow X}T^{\frac{n+2}{2}+\varepsilon}\sum_{\substack{q_2\leqslant T\\ q_2\mid (2L\Delta_F)^\infty}}q_2^\frac{1}{2}\gcd(q_2,L)^\frac{1}{2} \times\tau(2^n F^*(\bc)) \frac{T^{1+\varepsilon}}{q_2}\\ &\ll_\varepsilon L^{n }\sum_{T\nearrow X} T^{\frac{n+4}{2}+\varepsilon} |2^n F^*(\bc)|^\varepsilon \#\{q_2\leqslant T:q_2\mid (2L\Delta_F)^\infty\}\\ &\ll_\varepsilon |\bc|^\varepsilon L^{n+\varepsilon} \sum_{T\nearrow X}T^{\frac{n+4}{2}+\varepsilon}\ll_\varepsilon |\bc|^\varepsilon L^{n+\varepsilon} X^{\frac{n+4}{2}+\varepsilon}.
		\end{align*}

  Now if $F^*(\bc)=0$, then $$S_{q,L,\blambda}(\bc)\ll_\varepsilon L^n q^{\frac{n+3}{2}+\varepsilon} \gcd(q_2,L)^\frac{1}{2},$$ and hence
		\begin{align*}
			\sum_{q\leqslant X}\left|S_{q,L,\blambda}(\bc)\right|&\ll_\varepsilon  L^n \sum_{T\nearrow X}T^{\frac{n+3}{2}+\varepsilon}\sum_{\substack{q_2\leqslant T\\ q_2\mid (2L\Delta_F)^\infty}} \gcd(q_2,L)^\frac{1}{2}\sum_{\substack{q_1\sim  \frac{T}{q_2}\\ (q_1,2L\Delta_F)=1}}1 \\ &\ll_\varepsilon   L^n \sum_{T\nearrow X}T^{\frac{n+5}{2}+\varepsilon}\sum_{\substack{q_2\leqslant T\\ q_2\mid (2L\Delta_F)^\infty}}1\ll_\varepsilon L^{n+\varepsilon} X^{\frac{n+5}{2}+\varepsilon}
		\end{align*}
		
		Now assume that $n$ is even and $F^*(\bc)=0$. Using the second half of Corollary \ref{co:s1qbound} we have
		\begin{align*}
				\sum_{q\leqslant X}\left|S_{q,L,\blambda}(\bc)\right|&\ll_\varepsilon  L^n \sum_{T\nearrow X}\sum_{\substack{q_2\leqslant T\\ q_2\mid (2L\Delta_F)^\infty}} q_2^{\frac{n+3}{2}}\gcd(q_2,L)^\frac{1}{2}\sum_{\substack{q_1\sim  \frac{T}{q_2},q_1=\square\\ (q_1,2L\Delta_F)=1}}q_1^{\frac{n+3}{2}}\\ &\ll_{\varepsilon}L^n \sum_{T\nearrow X} T^{\frac{n+4}{2}}\#\{q_2\leqslant T: q_2\mid (2L\Delta_F)^\infty\}\\ &\ll_{\varepsilon}L^{n+\varepsilon} X^{\frac{n+4}{2}+\varepsilon}.
		\end{align*}
		The proof is thus finished.
	\qed
 
	\section{The contribution from $\bc\neq\boldsymbol{0}$}\label{se:cneq0}
	\begin{theorem}\label{thm:cnot0}
	Let $A>0$ be a real parameter. Assume $n\geqslant 4$, $\eta^{-1}\ll B^{A}$ and assume that $\gcd(\lambda_0,\ldots, \lambda_n,L)=1$. Suppose that \eqref{eq:tau} holds with $-1<\tau<1$. We have 
		$$\sum_{q=1}^{\infty}\sum_{\bc\in\BZ^{n+1}\setminus\{\boldsymbol{0}\}}\frac{S_{q,L,\blambda}(\bc)I_{q,L,\blambda}(w_{B,R}^{\pm};\bc)}{(qL)^{n+1}}\ll_{\varepsilon,A} \left(\frac{B}{LR}\right)^{n+1}\times \begin{cases}
			L^{-1}\eta^{\frac{9-5n}{2}}B^{-\frac{n-3}{2}\tau+\varepsilon} &\text{ if } n\geqslant 5;\\
			L^{-1}\eta^{-15/2}B^{-\tau+\varepsilon} & \text{ if } n=4.
		\end{cases}$$
  The implied constant is independent of $R,L,\blambda, \eta$.
	\end{theorem}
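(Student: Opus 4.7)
My plan is to combine the oscillatory integral bounds of Corollary \ref{co:simplehardest} with the Kloosterman-refined exponential sum estimates of Theorem \ref{thm:sqsum}, organising the $\bc$-sum into dyadic ranges according to the size of the transformed vector $\bd = \bd(B,R,L;\bc)$ defined in \eqref{eq:d}. Using the factorisation \eqref{eq:IqL2} together with \eqref{eq:cihatci}, the quantity to bound reduces (in absolute value) to
\begin{equation*}
\frac{B^{n+1}}{L^{2n+2}R^{n+1}}\sum_{q\ll Q}\sum_{\bc\neq \boldsymbol{0}}\frac{|S_{q,L,\blambda}(\bc)|\,|\widehat{\CI}_q(\widehat{w};\bd)|}{q^{n+1}},
\end{equation*}
where the effective truncation $q\ll Q$ is inherited from the support property of Heath--Brown's function $h$ together with the constraint $|\widetilde{F}(\bs)|\ll 1$ imposed by the factor $w_2$ in $\widehat{w}$. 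The target of $(B/(LR))^{n+1}L^{-1}E_\tau(\eta;B)$ then translates to a bound of order $L^n E_\tau(\eta;B)$ on the above double sum.

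I would first apply the ``simpler estimate'' of Corollary \ref{co:simplehardest} with $\|\widehat{w}\|_N\ll_N\eta^{-N}$ (from Lemma \ref{existence of smooth weights}) to show that $\bc$ with $|\bd|\gg Q\eta^{-1}B^{\varepsilon}$ contribute only $O(B^{-100})$, by taking $N$ sufficiently large in terms of $A,\varepsilon$ and using $\eta^{-1}\ll B^{A}$. For the remaining range $|\bd|\ll Q\eta^{-1}B^{\varepsilon}$, I would apply the ``harder estimate'' with auxiliary parameter $\eta^{-1}B^{\varepsilon}$, which is admissible whenever $|\bd|\gg q\eta^{-3}B^{3\varepsilon}$, forcing the first term to be negligible and delivering the polynomial decay $|\widehat{\CI}_q|\ll \eta^{-(n+1)}B^{\varepsilon(n+1)}(|\bd|/q)^{(1-n)/2}$. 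For yet smaller $|\bd|$ I would fall back on the trivial bound $|\widehat{\CI}_q|\ll 1$.

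Next I would perform a dyadic decomposition $q\sim T$ and $|\bd|\sim Z$. For each block, Theorem \ref{thm:sqsum} controls $\sum_{q\sim T}|S_{q,L,\blambda}(\bc)|$, and I would split $\bc$ according to whether $F^*(\bc)\neq 0$ (giving $\ll |\bc|^{\varepsilon}L^{n+\varepsilon}T^{(n+4)/2+\varepsilon}$) or $F^*(\bc)= 0$ (giving $\ll L^{n+\varepsilon}T^{(n+5)/2+\varepsilon}$). The number of $\bc\in\BZ^{n+1}$ with $|\bd|\sim Z$ is counted from the anisotropic dilation in \eqref{eq:d}: the relevant box in $\bc$-space has sidelengths comparable to $ZL^2/B$, $ZR^2L^2/B$, and $ZRL^2/B$ (the last with multiplicity $n-1$), which at the threshold $Z\sim Q\eta^{-1}$ contains $\asymp L^{n+1}\eta^{-(n+1)}$ integer points. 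The locus $\{F^*(\bc)=0\}$ has codimension one, so contributes a factor roughly $Z^{-1}$ smaller, partially compensating for its weaker $q$-sum bound. Summing the resulting estimates over dyadic $T\leqslant Q$ and $Z\leqslant Q\eta^{-1}B^{\varepsilon}$, then substituting $Q=B/(LR)$ and $(LR)^{2}\ll B^{1-\tau}$, should produce the claimed bound for $n\geqslant 5$.

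The main obstacle is the case $n=4$: the decay exponent $(1-n)/2=-3/2$ is only borderline summable in $Z$, and the contribution from the exceptional locus $\{F^*(\bc)=0\}$ becomes the dominant term. To match the target $L^{-1}\eta^{-15/2}B^{-\tau+\varepsilon}$, I would exploit the improved bound of Theorem \ref{thm:sqsum} for even $n$ and $F^*(\bc)=0$ (namely $L^{n+\varepsilon}T^{(n+4)/2+\varepsilon}$ in place of $T^{(n+5)/2+\varepsilon}$), and carefully rebalance the truncation parameter for $Z$ and the auxiliary parameter of the harder estimate; the second error term $B^{-(1+\tau)/4}$ in $E_\tau(\eta;B)$ for $n=4$ would then arise from the range of $Z$ in which the harder estimate just fails to be applicable. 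Maintaining uniformity in $L,R,\blambda$ throughout, and in particular ensuring that the trivial-bound regime in the small-$|\bd|$ range does not dominate, is the most delicate bookkeeping task.
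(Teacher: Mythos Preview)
Your overall strategy---truncating $\bc$ via the simpler estimate, then balancing the harder estimate for small $q$ against the trivial estimate for large $q$, and inserting the $q$-averaged bounds of Theorem~\ref{thm:sqsum}---is indeed the paper's. However, there is a genuine gap in your $\bc$-count which makes the $L$-dependence come out wrong. The paper first establishes (Proposition~\ref{prop:condc}) that $S_{q,L,\blambda}(\bc)=0$ unless $\bc$ lies in the sublattice
\[
\Gamma_{\blambda,L}=\{\bd\in\BZ^{n+1}:\exists\,b\in\BZ,\ \bd\equiv b\nabla F(\blambda)\bmod L\},
\]
which has determinant $\asymp L^{n}$. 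This restriction is then fed into the lattice point count of Lemma~\ref{le:countingcmodL}, which yields $\#\Theta_{\blambda,L}(T,B)\ll T\eta^{-n}B^{\varepsilon}$ with \emph{no} factor of $L$. By contrast, when you count all integer $\bc$ in the anisotropic box $|\bd|\sim Z$ and obtain $\asymp L^{n+1}\eta^{-(n+1)}$ at the threshold, you are overcounting by essentially $L^{n}$. Since the exponential sum bound of Theorem~\ref{thm:sqsum} already contributes $L^{n+\varepsilon}$, your double sum would come out as $L^{2n+\varepsilon}\times(\cdots)$ rather than the required $L^{n+\varepsilon}\times(\cdots)$, and the final bound would exceed the target by $L^{n}$. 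The lattice condition is not a cosmetic refinement; it is what makes the stated $L$-uniformity possible.

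Two smaller remarks. First, the dichotomy on $F^{*}(\bc)$ that you propose is not needed: for $n\geqslant 5$ the paper simply applies the crude bound $\ll L^{n+\varepsilon}X^{(n+5)/2+\varepsilon}$ of Theorem~\ref{thm:sqsum} uniformly in $\bc$, and for $n=4$ (which is even) the improved bound $\ll L^{n+\varepsilon}X^{(n+4)/2+\varepsilon}$ holds for \emph{every} nonzero $\bc$, so again no split is required. Second, the term $B^{-(1+\tau)/4}$ that you anticipate for $n=4$ does not arise in Theorem~\ref{thm:cnot0} at all; it enters $E_{\tau}(\eta;B)$ only via the $\bc=\boldsymbol{0}$ contribution treated in Theorem~\ref{thm:c=0}. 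Looking for it in the $\bc\neq\boldsymbol{0}$ analysis would send you on a false trail.
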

	
	\subsection{A lattice condition on $\bc$.}
	The goal is to prove the following constraint on the summand $\bc$ (see also \cite[Lemma 4.2]{Sardari}).
	\begin{proposition}\label{prop:condc}
		We have $S_{q,L,\blambda}(\bc)\neq 0$ only if $\bc$ belongs to the lattice \begin{equation}\label{eq:condc}
			\Gamma_{\blambda,L}:=\{\bd\in\BZ^{n+1}:\text{there exists } b\in\BZ\text{ such that } \bd\equiv b\nabla F(\blambda)\bmod L\}.
		\end{equation}
	\end{proposition}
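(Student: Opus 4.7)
The plan is to exhibit, for each $\boldsymbol{w}\in\BZ^{n+1}$ with $\nabla F(\blambda)\cdot\boldsymbol{w}\equiv 0\bmod L$, a translation symmetry of the index $\bsigma$ in $S_{q,L,\blambda}(\bc)$ under which the summand picks up only the $\bsigma$-independent phase $\e_L(\bc\cdot\boldsymbol{w})$. Vanishing of the sum when this phase is non-trivial, combined with self-duality of the standard bilinear pairing on $(\BZ/L\BZ)^{n+1}$, then forces $\bc\equiv b\,\nabla F(\blambda)\bmod L$ for some $b\in\BZ$.

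Concretely, I would fix such a $\boldsymbol{w}$ and consider the shift $\bsigma\mapsto\bsigma+q\boldsymbol{w}$ on $(\BZ/qL\BZ)^{n+1}$. Shifting by $q\boldsymbol{w}$ rather than $\boldsymbol{w}$ serves two purposes: the result depends only on $\boldsymbol{w}\bmod L$, and the extra factor of $q$ will cancel the denominator $qL$ inside $\e_{qL}$, leaving every $\bsigma$-dependent contribution divisible by $L$ and hence trivial in the phase. Linearity of $H_{\blambda,L}$ together with the hypothesis on $\boldsymbol{w}$ gives $H_{\blambda,L}(\bsigma+q\boldsymbol{w})\equiv H_{\blambda,L}(\bsigma)\bmod L$, so the constraint defining the summation set is preserved and the shift is a bijection. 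The key calculation is now routine: using $F(\bsigma+q\boldsymbol{w})=F(\bsigma)+q\,\nabla F(\bsigma)\cdot\boldsymbol{w}+q^2 F(\boldsymbol{w})$, the change in the argument of $\e_{qL}$ equals
\[
aq\,\nabla F(\blambda)\cdot\boldsymbol{w}+aLq\,\nabla F(\bsigma)\cdot\boldsymbol{w}+aLq^2 F(\boldsymbol{w})+q\,\bc\cdot\boldsymbol{w}.
\]
After division by $qL$, the middle two terms become integers, and the first is an integer by the hypothesis $L\mid\nabla F(\blambda)\cdot\boldsymbol{w}$; only the contribution $\bc\cdot\boldsymbol{w}/L$ survives. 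Hence $S_{q,L,\blambda}(\bc)=\e_L(\bc\cdot\boldsymbol{w})\,S_{q,L,\blambda}(\bc)$, and so $S_{q,L,\blambda}(\bc)=0$ whenever $\bc\cdot\boldsymbol{w}\not\equiv 0\bmod L$.

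To finish, I would invoke the standard fact that the annihilator of $\{\boldsymbol{w}\in(\BZ/L\BZ)^{n+1}:\nabla F(\blambda)\cdot\boldsymbol{w}\equiv 0\bmod L\}$ under the bilinear pairing is precisely the cyclic subgroup $\BZ\cdot\nabla F(\blambda)$ in $(\BZ/L\BZ)^{n+1}$: the inclusion $\supseteq$ is immediate, and both subgroups have order $L/\gcd(L,\nabla F(\blambda))$ by the annihilator count for the non-degenerate standard pairing on the finite abelian group $(\BZ/L\BZ)^{n+1}$. Consequently $\bc\equiv b\,\nabla F(\blambda)\bmod L$ for some $b\in\BZ$, that is, $\bc\in\Gamma_{\blambda,L}$. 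I do not anticipate a serious obstacle; the whole argument is a single-shift computation, and the only care needed is the book-keeping of the powers of $q$ and $L$ so that every $\bsigma$-dependent term in the exponent is absorbed into $qL\BZ$.
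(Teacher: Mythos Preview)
Your argument is correct and takes a genuinely different route from the paper. The paper proceeds by detecting the constraint $H_{\blambda,L}(\bsigma)\equiv 0\bmod L$ via an extra character sum over $l\bmod L$, substituting $b=a+ql$, and then unfolding the $\bsigma$-sum through a chain of auxiliary sums $\CS^{(0)},\CS^{(1)},\CS^{(2)}$ until a final free sum over $\bsigma\bmod qL$ forces the divisibility $qL\mid b\,\nabla F(\blambda)+\bc+L\be$; reducing this modulo $L$ yields $\bc\equiv -b\,\nabla F(\blambda)\bmod L$. You instead exploit a single translation symmetry $\bsigma\mapsto\bsigma+q\boldsymbol{w}$ for every $\boldsymbol{w}$ orthogonal to $\nabla F(\blambda)$ modulo $L$, reducing the question to identifying the annihilator of this orthogonal subgroup under the standard pairing on $(\BZ/L\BZ)^{n+1}$. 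Your approach is shorter and conceptually cleaner, avoiding the nested character manipulations; the paper's unfolding, while longer, mirrors the style of character-sum bookkeeping used elsewhere in the analysis and makes the role of the parameter $b$ explicit from the outset. One small point of presentation: your phrase ``$L/\gcd(L,\nabla F(\blambda))$'' should be understood as $L$ divided by the gcd of $L$ with all coordinates of $\nabla F(\blambda)$; with that reading the order count for both subgroups is indeed $L/g$ and the duality step goes through.
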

	\begin{proof}
		We recall the expression \eqref{eq:Sqc}. On resolving the condition $H_{\blambda,L}(\bsigma)\equiv 0\bmod L$, we can rewrite
		$$S_{q,L,\blambda}(\bc)=\sum_{\bsigma\bmod qL}\frac{1}{L}\sum_{l\bmod L}\sum_{\substack{a\bmod q\\(a,q)=1}}\CS_{q,L,\blambda}(a,l;\bc,\bsigma),$$ where \begin{align*}
			\CS_{q,L,\blambda}(a,l;\bc,\bsigma)&:=\e_{qL}\left(a\left(H_{\boldsymbol{\lambda},L}(\boldsymbol{\sigma})+LF(\boldsymbol{\sigma})\right)+\bc\cdot\bsigma\right)\e_L\left(lH_{\blambda,L}(\bsigma)\right)\\ &=\e_q(aF(\bsigma))\e_{qL}\left(\left((a+ql)\nabla F(\blambda)+\bc\right)\cdot\bsigma+(a+ql)k\right).
		\end{align*}
		On substituting $b=a+ql$, we can rewrite 
		$$S_{q,L,\blambda}(\bc)=\frac{1}{L}\sum_{\substack{b\bmod qL\\(b,q)=1}}\e_{qL}(bk)\CS^{(0)}_{q,L,\blambda}(b;\bc),$$
		where $$\CS^{(0)}_{q,L,\blambda}(b;\bc):=\sum_{\bsigma\bmod qL}\e_q(b F(\bsigma))\e_{qL}\left((b\nabla F(\blambda)+\bc)\cdot\bsigma\right).$$
		So $S_{q,L,\blambda}(\bc)\neq 0$ only if there exists $b\bmod qL$ such that $\CS^{(0)}_{q,L,\blambda}(b;\bc)\neq 0$. 
		Observe that $$\CS^{(0)}_{q,L,\blambda}(b;\bc)=\sum_{\bw\bmod q}\e_q\left(bF(\bw)\right)\CS^{(1)}_{q,L,\blambda}(b;\bc,\bw),$$
		where
		$$\CS^{(1)}_{q,L,\blambda}(b;\bc,\bw):=\sum_{\substack{\bsigma\bmod qL\\\bsigma\equiv\bw\bmod q}}\e_{qL}\left(\left(b\nabla F(\blambda)+\bc\right)\cdot\bsigma\right).$$
		So $\CS^{(0)}_{q,L,\blambda}(b;\bc)\neq 0$ only if there exists $\bw\bmod q$ such that $\CS^{(1)}_{q,L,\blambda}(b;\bc,\bw)\neq 0$. On resolving the condition $\bsigma\equiv\bw\bmod q$, we have
		\begin{align*}
			\CS^{(1)}_{q,L,\blambda}(b;\bc,\bw)&=\sum_{\substack{\bsigma\bmod qL}}\frac{1}{q^{n+1}}\sum_{\be\bmod q}\e_q\left((\bsigma-\bw)\cdot\be\right)\e_{qL}\left(\left(b\nabla F(\blambda)+\bc\right)\cdot\bsigma\right)\\ &=\frac{1}{q^{n+1}}\sum_{\be\bmod q}\e_q\left(-\bw\cdot\be\right)	\CS^{(2)}_{q,L,\blambda}(b;\bc,\bw,\be),
		\end{align*} where $$\CS^{(2)}_{q,L,\blambda}(b;\bc,\bw,\be):=\sum_{\substack{\bsigma\bmod qL}}\e_{qL}\left(\left(b\nabla F(\blambda)+\bc+L\be\right)\cdot\bsigma\right).$$
		Now it is clear that $\CS^{(1)}_{q,L,\blambda}(b;\bc,\bw)\neq 0$ only if there exists $\be\bmod q$ such that $\CS^{(2)}_{q,L,\blambda}(b;\bc,\bw,\be)\neq 0$, which is the case if and only if $$qL\mid b\nabla F(\blambda)+\bc+L\be.$$ In particular, a necessary condition for this is $$\bc\equiv -b\nabla F(\blambda)\bmod L.$$
		The proof is thus completed.
	\end{proof}

	\subsection{Proof of Theorem \ref{thm:cnot0}}
	Recalling the choice of $Q$ from \eqref{eq:Q}, we have
 $$Q \gg B^{\frac{1}{2}(1+\tau)}\gg 1.$$
	Moreover, recalling the definition of $\widetilde{\bc}$ \eqref{eq:bctilde} and $\bd$ \eqref{eq:d}, we have \begin{equation}\label{eq:ddcc}
		|\dd|\geqslant \frac{Q}{LR}\max_{0\leqslant i\leqslant n}|\widetilde{c_i}|\gg B^\tau |\bc|.
	\end{equation}  We shall use this inequality repeatedly. We recall also the assumption that $\|w\|_N \ll_N \eta^{-N}$. 
	
	In view of Proposition \ref{prop:condc}, we consider
	$$\CG(L,R;B):= \underset{\substack{\bc\in\BZ^{n+1}\setminus\{\boldsymbol{0}\}:\bc\in\Gamma_{\blambda,L}\\q\ll Q}}{\sum\sum}\frac{\left|S_{q,L,\blambda}(\bc)\widehat{\CI}_{q}(\widehat{w}^{\pm};\boldsymbol{d})\right|}{q^{n+1}}.$$
	
	\textbf{Step I.} We shall prove that the range $|\bc|> \eta^{-1}Q B^{-\tau+\varepsilon}$ has negligible contribution. The ``simpler" estimate of Corollary \ref{co:simplehardest} yields
	$$\widehat{\CI}_{q}(\widehat{w}^{\pm};\boldsymbol{d})\ll_N \frac{Q}{q}\left(\frac{B^\tau |\bc|\eta}{Q}\right)^{-N}.
 $$ 
	We record from Theorem \ref{thm:sqsum} that uniformly for any $\bc$, whenever $n\geqslant 4$,
	$$\sum_{q=1}^{\infty}\frac{\left|S_{q,L,\blambda}(\bc)\right|}{q^{n+2}}\ll L^n \sum_{q=1}^{\infty}\frac{1}{q^{\frac{n}{2}}}=O(L^n).$$ For any constant $c>0$ and $N>n+1$ we then have the estimate
	\begin{align*}
		&\sum_{q=1}^{cQ}\sum_{\bc:|\bc|>\eta^{-1}Q B^{-\tau+\varepsilon}}\frac{\left|S_{q,L,\blambda}(\bc)\widehat{\CI}_{q}(\widehat{w}^{\pm};\boldsymbol{d})\right|}{q^{n+1}} \\ 
		\ll_N&  \sum_{\bc:|\bc|>\eta^{-1}Q B^{-\tau+\varepsilon}}Q\left(\frac{B^\tau |\bc|\eta}{Q}\right)^{-N}\sum_{q=1}^{\infty}\frac{\left|S_{q,L,\blambda}(\bc)\right|}{q^{n+2}}\\ 
		\ll_N& L^n \eta^{-N}Q^{1+N} B^{-\tau N} \sum_{\bc:|\bc|>\eta^{-1}Q B^{-\tau+\varepsilon}}\frac{1}{|\bc|^N}\\
		\ll_N & L^n \eta^{-N}Q^{1+N} B^{-\tau N}\sum_{t>\eta^{-1}Q B^{-\tau+\varepsilon}}\frac{1}{t^{N-n}}\\ 
  \ll_N& L^n \eta^{-N}Q^{1+N} B^{-\tau N} \left(\eta^{-1}QB^{-\tau+\varepsilon}\right)^{-N+n+1}\\
  \ll_N& L^n \eta^{-n-1} Q^{2+n} B^{-\tau(n+1)+\varepsilon(-N+n+1)}.
	\end{align*}
	We then obtain a power-saving by taking $N$ sufficiently large. 
	
	Nonzero vectors $|\bc|\leqslant \eta^{-1}Q B^{-\tau+\varepsilon}$ with either  $|\widetilde{c_0}|>\frac{\eta^{-1}Q L^2}{B^{1-\varepsilon}}$ or $|\widetilde{c_j}|> \frac{\eta^{-1}QL^2 R}{B^{1-\varepsilon}}$ for some $j\geqslant 2$ can similarly be shown to provide a negligible contribution. Indeed, in the first case $$|d_0|>\frac{\eta^{-1}Q L^2}{B^{1-\varepsilon}}\times \frac{B}{L^2}=\eta^{-1}QB^\varepsilon$$ and in the second case 
	$$|d_j|>\frac{\eta^{-1}QL^2 R}{B^{1-\varepsilon}}\times\frac{B}{L^2R}=\eta^{-1}QB^\varepsilon.$$ Therefore, in either case we have $$|\dd|> \eta^{-1}QB^\varepsilon,$$ and hence the simple estimate in Corollary \ref{co:simplehardest} again gives
	$$\widehat{\CI}_{q}(\widehat{w}^{\pm};\boldsymbol{d})\ll_N \frac{Q}{q} B^{-\varepsilon N}.$$
	Taking $N$ large enough and summing over $q$ and over such $\bc$ as before gives a power-saving.
	
	\textbf{Step II.} 
	In what follows we shall be concerned with  \begin{equation}\label{eq:cond00}
		0\neq|\bc|\leqslant \eta^{-1} Q B^{-\tau +\varepsilon},\quad |\widetilde{c_0}|\leqslant \frac{\eta^{-1}QL^2 }{B^{1-\varepsilon}}\quad \text{and}\quad |\widetilde{c_j}|\leqslant \frac{\eta^{-1}QL^2 R}{B^{1-\varepsilon}} \quad \text{for all }j\geqslant 2.
	\end{equation}
	The ``harder estimate" of Corollary \ref{co:simplehardest} is only useful when the term $A^{-N}\|\widehat{w}^{\pm}\|_N$ can be made small, since otherwise it is better to use the trivial bound $\widehat{\CI}_{q}(\widehat{w}^{\pm};\boldsymbol{d}) \ll 1$. In our setup where $\|\widehat{w}^{\pm}\|_N \asymp \eta^{-N}$, it is natural to choose $A= \eta^{-1} B^{\varepsilon}$. We may apply the ``harder estimate" for all $q< \eta^3B^{\tau-\eps}|\bc|$, because 
 \begin{align*}
     q< \eta^3B^{\tau-\eps}|\bc| \implies q < \eta^3 B^{-\eps}|\bd| \implies \left|\frac{\bd}{q}\right| > \eta^{-3}B^{\eps} 
 \end{align*}
where in the last line we have redefined $\eps$.
Motivated by this, we define	
	\begin{align*}
		\CG^{(1)}(L,R;B)&:= \underset{\substack{\bc\in\Gamma_{\blambda,L},\eqref{eq:cond00}\text{ holds}\\ q\leqslant \eta^3 B^{\tau-\varepsilon}|\bc|^{1-\varepsilon}}}{\sum\sum}\frac{\left|S_{q,L,\blambda}(\bc)\widehat{\CI}_{q}(\widehat{w}^{\pm};\boldsymbol{d})\right|}{q^{n+1}},\\
		\CG^{(2)}(L,R;B)&:= \underset{\substack{\bc\in\Gamma_{\blambda,L},\eqref{eq:cond00}\text{ holds}\\ q> \eta^3 B^{\tau-\varepsilon}|\bc|^{1-\varepsilon}}}{\sum\sum}\frac{\left|S_{q,L,\blambda}(\bc)\widehat{\CI}_{q}(\widehat{w}^{\pm};\boldsymbol{d})\right|}{q^{n+1}}.
	\end{align*}
	
	We assume momentarily that $n\geqslant 5$. By Theorem \ref{thm:sqsum} with partial summation, we have for any real $X>0$,
	\begin{equation}\label{eq:Sqbd}
		\sum_{q\leqslant X} \frac{\left|S_{q,L,\blambda}(\bc)\right|}{q^{l_1}}\ll_\varepsilon |\bc|^\varepsilon L^{n+\varepsilon } X^{\frac{n+5}{2}-l_1+\varepsilon}
	\end{equation}
	for every $l_1\leqslant \frac{n+5}{2}$, and 
	\begin{equation}\label{eq:Sqbd2}
		\sum_{q>X} \frac{\left|S_{q,L,\blambda}(\bc)\right|}{q^{l_2}}\ll_\varepsilon B^\varepsilon |\bc|^\varepsilon L^{n+\varepsilon } X^{\frac{n+5}{2}-l_2+\varepsilon}
	\end{equation}
	for every $l_2>\frac{n+5}{2}$. 
	
	We use \eqref{eq:Sqbd} and the ``harder estimate" for $\widehat{\CI}_{q}(\widehat{w}^{\pm};\boldsymbol{d}) $ to get:
	
	\begin{align*}
		\CG^{(1)}(L,R;B)&\ll \eta^{-n-1} B^{-\frac{n-1}{2}\tau+\varepsilon}\sum_{\bc\in\Gamma_{\blambda,L},\eqref{eq:cond00}\text{ holds}} |\bc|^{-\frac{n-1}{2}}\sum_{q\leqslant \eta^3 B^{\tau-\varepsilon}|\bc|^{1-\varepsilon}}\frac{|S_{q,L,\blambda}(\bc)|}{q^{\frac{n+3}{2}}}\\ &\ll_\varepsilon L^{n+\varepsilon} B^{-\frac{n-3}{2}\tau+\varepsilon}\eta^{2-n}\sum_{\bc\in\Gamma_{\blambda,L},\eqref{eq:cond00}\text{ holds}}|\bc|^{-\frac{n-3}{2}}.
	\end{align*}
	
	Now by \eqref{eq:Sqbd2} and the trivial estimate for $\widehat{\CI}_{q}(\widehat{w}^{\pm};\boldsymbol{d})$, we get:
	
	\begin{align*}
		\CG^{(2)}(L,R;B)&\ll \sum_{\bc\in\Gamma_{\blambda,L},\eqref{eq:cond00}\text{ holds}}\sum_{q> \eta^3 B^{\tau-\varepsilon}|\bc|^{1-\varepsilon}}\frac{\left|S_{q,L,\blambda}(\bc)\right|}{q^{n+1}}\\ &\ll_\varepsilon L^{n+\varepsilon }B^{\varepsilon} \sum_{\bc\in\Gamma_{\blambda,L},\eqref{eq:cond00}\text{ holds}}\left(\eta^3 B^{\tau-\varepsilon}|\bc|^{1-\varepsilon}\right)^{2-\frac{n+1}{2}}\\  &\ll  L^{n+\varepsilon }\eta^{\frac{3(3-n)}{2}} B^{-\frac{n-3}{2}\tau+\varepsilon}\sum_{\bc\in\Gamma_{\blambda,L},\eqref{eq:cond00}\text{ holds}}|\bc|^{-\frac{n-3}{2}}.
	\end{align*}
	
	To estimate the above sums over $\bc$, we use
	\begin{lemma}\label{le:countingcmodL} Assume that $\gcd(\lambda_0,\ldots, \lambda_n,L)=1$. 
		For $T\geq 1$, consider the set
  $$\Theta_{\blambda,L}(T,B):=\left\{\bc\in\BZ^{n+1}:\bc\in\Gamma_{\blambda,L},|\bc|\leqslant T,\max\left(|\widetilde{c_0}|,|\widetilde{c_j}|,2\leqslant j\leqslant n\right)\leqslant \eta^{-1}LB^{\varepsilon}\right\}.$$
		Then for $\varepsilon>0$ small enough, we have
		$$\#\Theta_{\blambda,L}(T,B)\ll_\varepsilon  T \eta^{-n}B^{\varepsilon},$$ 
		uniformly in $L,\blambda$.
	\end{lemma}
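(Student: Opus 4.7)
The plan is to pass to the $\widetilde{\bc}$-coordinate system and invoke a standard lattice-point counting bound. Writing $\nabla F(\blambda)=2\CM_F\blambda$ and multiplying on the left by $2^n\CM_F^{\mathrm{adj}}$ yields $2^{n+1}\Delta_F\blambda$; combined with $\gcd(\lambda_0,\ldots,\lambda_n,L)=1$, this will show that $e:=\gcd_{0\leq i\leq n}(\nabla F(\blambda)_i,L)$ divides $2^{n+1}\Delta_F$, hence $e=O_F(1)$. Therefore the image of $b\mapsto b\nabla F(\blambda)\bmod L$ in $(\BZ/L\BZ)^{n+1}$ has cardinality $L/e$, and $L\BZ^{n+1}\subset\Gamma_{\blambda,L}$ with index $L/e$; in particular, $\Gamma_{\blambda,L}$ has covolume $\asymp L^n$. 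Letting $\widetilde{\Gamma}:=(M^{-1})^t\Gamma_{\blambda,L}$, which has the same asymptotic covolume since $|\det M|=O(1)$, and using $|\bc|_\infty\asymp|\widetilde{\bc}|_\infty$, the constraint $|\bc|\leq T$ will yield $|\widetilde{c_j}|\leq CT$ for all $j$, so $\Theta_{\blambda,L}(T,B)$ embeds into $\widetilde{\Gamma}\cap\widetilde{\CB}$ for a centrally symmetric box $\widetilde{\CB}$ with semi-widths $\eta^{-1}LB^\varepsilon$ in the $n$ coordinate directions $i\neq 1$ and $CT$ in the $\widetilde{c_1}$-direction.

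Next, I will bound $\#(\widetilde{\Gamma}\cap\widetilde{\CB})$ via a standard lattice-point-in-parallelepiped estimate (e.g.\ Davenport's bound, or a consequence of Minkowski's second theorem on successive minima): the main term is $\mathrm{vol}(\widetilde{\CB})/\mathrm{covol}(\widetilde{\Gamma})\asymp T\eta^{-n}B^{n\varepsilon}$, which after relabelling $\varepsilon$ yields the claimed bound $T\eta^{-n}B^\varepsilon$. Boundary contributions from subspaces of smaller dimension will be controlled by exploiting the inclusion $L(M^{-1})^t\BZ^{n+1}\subset\widetilde{\Gamma}$, which constrains the successive minima $\mu_1\leq\cdots\leq\mu_{n+1}$ of $\widetilde{\Gamma}$ to satisfy $\mu_i\ll L$ for all $i$ (with $\prod_i\mu_i\asymp L^n$), thus ruling out pathological lattice configurations; the resulting corrections are absorbed into $B^\varepsilon$ using the hypothesis $\eta^{-1}\ll B^A$.

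The hard part will be controlling these boundary corrections uniformly when the box $\widetilde{\CB}$ is thin, i.e.\ when $T\ll L$. In this regime the volume-over-covolume heuristic can be loose, so I plan to complement the lattice-point estimate with two observations: first, the trivial bound $\#\Theta\leq(2T+1)^{n+1}$ alone suffices when $T\leq\eta^{-1}B^{\varepsilon/n}$, since then $T^{n+1}\leq T\eta^{-n}B^\varepsilon$; and second, the coset decomposition $\Gamma_{\blambda,L}=\bigsqcup_{b\in\CS}(b\nabla F(\blambda)+L\BZ^{n+1})$ with $|\CS|=L/e$ reduces counting in each coset to counting integers $\bk\in\BZ^{n+1}$ in a parallelepiped of volume $\asymp\eta^{-n}B^{n\varepsilon}T/L$. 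Summing over the $O(L)$ cosets recovers the desired bound once the corrections from thin cosets are handled using the structural constraints on $\widetilde{\Gamma}$ described above.
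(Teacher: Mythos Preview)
Your approach is essentially the paper's: compute the covolume $\asymp L^n$, observe that all successive minima are $\ll L$, and invoke a Schmidt-type lattice-point bound in the box. The first two paragraphs set this up correctly.

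Where you drift is in not finishing the argument in paragraph two. From $\mu_i\ll L$ for all $i$ together with $\prod_{i=1}^{n+1}\mu_i\asymp L^n$ (Minkowski's second theorem), you immediately obtain
\[
\prod_{i=1}^{k}\mu_i \;=\; \frac{\prod_{i=1}^{n+1}\mu_i}{\prod_{i=k+1}^{n+1}\mu_i}\;\gg\; \frac{L^n}{L^{n+1-k}} \;=\; L^{k-1}\qquad (1\le k\le n+1).
\]
This lower bound on \emph{partial} products is the key step, and it is what the paper uses. With it, the Schmidt bound gives directly: in the regime $T>\eta^{-1}LB^{\varepsilon}$, the $k$-th boundary term is $\ll T(\eta^{-1}LB^{\varepsilon})^{k-1}/L^{k-1}=T(\eta^{-1}B^{\varepsilon})^{k-1}\ll T\eta^{-n}B^{n\varepsilon}$; in the regime $T\le \eta^{-1}LB^{\varepsilon}$, drop the $\widetilde c_j$-constraints (now weaker than $|\bc|\le T$) and bound $T^k/L^{k-1}\le T(\eta^{-1}B^{\varepsilon})^{k-1}\ll T\eta^{-n}B^{n\varepsilon}$. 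No appeal to $\eta^{-1}\ll B^A$ is needed.

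Your third paragraph is therefore unnecessary, and the coset decomposition there is problematic in its own right: summing a $+1$ correction over $O(L)$ cosets can produce a contribution of order $L$, which you then try to control ``using the structural constraints on $\widetilde\Gamma$ described above'' --- but those are precisely the successive-minima bounds from paragraph two, so the argument becomes circular. Drop paragraph three entirely and instead make the partial-product inequality above explicit; the two-case split $T\gtrless \eta^{-1}LB^{\varepsilon}$ then closes the proof exactly as in the paper.
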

	Taking Lemma \ref{le:countingcmodL} for granted, we now apply a dyadic sum over $|\bc|$ up to $\eta^{-1} Q B^{-\tau +\varepsilon}$.  After appropriately redefining $\eps$, we have
	\begin{align*}
		\sum_{\bc\in\Gamma_{\blambda,L},\eqref{eq:cond00}\text{ holds}}|\bc|^{-\frac{n-3}{2}}& \ll_\varepsilon \sum_{T\nearrow \eta^{-1} Q B^{-\tau +\varepsilon}} T^{-\frac{n-3}{2}}\#\Theta_{\blambda,L}(T,B)\\
		&\ll_\varepsilon \eta^{-n}B^\varepsilon \sum_{T\nearrow \eta^{-1} Q B^{-\tau +\varepsilon}}T^{-\frac{n-5}{2}}\ll_\varepsilon \eta^{-n}B^\varepsilon,
	\end{align*} whenever $n\geqslant 5$.
	
	To summarise, we conclude that  $$\CG(L,R;B)\ll_\varepsilon L^{n+\varepsilon}\eta^{\frac{9-5n}{2}} B^{-\frac{n-3}{2}\tau+\varepsilon}.$$
	Finally, using \eqref{eq:IqL2} and \eqref{eq:cihatci}, we have
	\begin{align*}
		\left|I_{q,L,\blambda}(w_{B,R}^{\pm};\bc)\right|&=\frac{1}{|\det M|L^{n+1}}\left|\CI_{q,L}(w_{B,R}^{\pm};\bc)\right|\\ &=\frac{1}{|\det M|}\left(\frac{B}{LR}\right)^{n+1}\left|\widehat{\CI}_{q}(\widehat{w}^{\pm};\boldsymbol{d})\right|,
	\end{align*} and hence
	$$\sum_{q=1}^{\infty}\sum_{\bc\in\BZ^{n+1}\setminus\{\boldsymbol{0}\}}\frac{\left|S_{q,L,\blambda}(\bc)I_{q,L,\blambda}(w_{B,R}^{\pm};\bc)\right|}{(qL)^{n+1}}\ll \frac{1}{L^{n+1}}\left(\frac{B}{LR}\right)^{n+1}\CG(L,R;B).$$
	The proof is thus completed for the case $n\geqslant 5$.

	As for the case $n=4$, we use instead the following estimate from Theorem \ref{thm:sqsum}:
	\begin{equation}\label{n=4 better estimate for Sqc}
		\sum_{q\leqslant X} \left|S_{q,L,\blambda}(\bc)\right|\ll_\varepsilon |\bc|^\varepsilon L^{4+\varepsilon}X^{4+\varepsilon},
	\end{equation}
	valid for all $\bc\in\BZ^5\setminus\{\boldsymbol{0}\}$.
	
	The only alteration required for the case $n=4$ is to replace the partial summation bounds in (\ref{eq:Sqbd}) of \textbf{Step II} with (\ref{n=4 better estimate for Sqc}). We obtain
	\begin{equation*}
		\CG(L,R;B)\ll_\varepsilon L^{4+\varepsilon}\eta^{-7/2}B^{-\tau + \varepsilon} \sum_{\bc\in\Gamma_{\blambda,L},\eqref{eq:cond00}\text{ holds}} |\bc|^{-1}\ll_{\varepsilon} L^{4+\varepsilon}\eta^{-15/2}B^{-\tau+\varepsilon}.
 \end{equation*} Proceding in the same way as before yields the desired bound. \qed
	
	\subsection{Proof of Lemma \ref{le:countingcmodL}}
	Given that $\gcd(\lambda_0,\ldots, \lambda_n,L)=1$, the  lattice $\Gamma_{\blambda,L}$ \eqref{eq:condc} is of rank $(n+1)$ and of determinant $\det (\Gamma_{\blambda,L})$ with $L^n\ll \det (\Gamma_{\blambda,L})\ll L^n$, where the implied constants only depend on $F$. Since it contains $L\be_1,\ldots,L\be_{n+1}$, where $\be_i$ stands for the $i$-th unit vector in $\BZ^{n+1}$, its successive minima satisfy 
	$$\lambda_j\leq L \text{ for all } 1\leqslant j\leqslant n+1,$$ 
	and consequently, by Minkowski's second theorem, we have
	$$\prod_{i=1}^{k}\lambda_i\gg \frac{\det(\Gamma_{\blambda,L})}{L^{n+1-k}} \gg L^{k-1}$$
	for any $1\leq k \leq n+1$. 
	
 First assume that $T> \eta^{-1} QL^2R B^{-1+\varepsilon} = \eta^{-1} LB^{\varepsilon}$. Note that the norm $\max_{0\leqslant i\leqslant n}|\widetilde{c_i}|$ is equivalent to $|\bc|$. Then by standard results on counting lattice points (see e.g. \cite[Lemma 2]{Schmidt}), we obtain
	\begin{align*}
		\#\Theta_{\blambda,L}(T,B)&\ll 1+ \frac{T(L\eta^{-1}B^\varepsilon)^n}{\det(\Gamma_{\blambda,L})}+\sum_{k=1}^{n}\frac{T(L\eta^{-1}B^\varepsilon)^{k-1}}{\prod_{i=1}^{k}\lambda_i}\\ &\ll T\eta^{-n}B^\varepsilon.
	\end{align*}
	Now assume $T\leqslant \eta^{-1} LB^\varepsilon$. We have similarly
	\begin{align*}
		\#\Theta_{\blambda,L}(T,B)&\ll 1+\sum_{k=1}^{n+1}\frac{T^{k}}{\prod_{i=1}^{k}\lambda_i}\ll 1+ T \sum_{k=1}^{n+1} \frac{ (\eta^{-1}LB^{\varepsilon})^{k-1}}{L^{k-1}} \ll T\eta^{-n}B^\varepsilon.
	\end{align*}
	Here we have redefined $\varepsilon$.\qed
	
	\section{The contribution from $\bc=\boldsymbol{0}$}\label{se:c=0}
	In this section we shall derive the main term. In the following, when we write $w_{B,R}$ we mean one of $w_{B,R}^{\pm}$.
	\begin{theorem}\label{thm:c=0}
		Assume that $n\geqslant 4$, and that \eqref{eq:tau} holds with $-1<\tau<1$. Then
		$$\sum_{q=1}^{\infty}\frac{S_{q,L,\blambda}(\boldsymbol{0})I_{q,L,\blambda}(w_{B,R};\boldsymbol{0})}{(qL)^{n+1}}=\frac{B^{n+1}}{(LR)^2}\left(\CI(w_R)\mathfrak{S}(\CW;L,\blambda)+O_{\tau,\varepsilon}\left(\frac{
  B^{\frac{3-n}{4}(1+\tau)+\varepsilon}}{L^n R^{n-1}}\right)\right).$$
  The implied constant is independent of $R,L,\blambda$.
		We refer to \eqref{eq:CIwR} and \eqref{eq:singserW} for the definitions of $\CI(w_R)$ and $\mathfrak{S}(\CW;L,\blambda)$.
	\end{theorem}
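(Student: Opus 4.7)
The approach follows the standard $\delta$-method strategy for extracting the main term from the zero-frequency contribution, in the spirit of Heath-Brown's \cite[\S13]{H-Bdelta}. Setting $\bc = \boldsymbol{0}$ (so $\bd = \boldsymbol{0}$) in \eqref{eq:IqL2} and \eqref{eq:cihatci}, the sum rewrites as
\[
\sum_{q=1}^{\infty} \frac{S_{q,L,\blambda}(\boldsymbol{0})\, I_{q,L,\blambda}(w_{B,R};\boldsymbol{0})}{(qL)^{n+1}} = \frac{B^{n+1}}{|\det M|\, R^{n+1}\, L^{2(n+1)}} \sum_{q=1}^{\infty} \frac{S_{q,L,\blambda}(\boldsymbol{0})\, \widehat{\CI}_q(\widehat{w};\boldsymbol{0})}{q^{n+1}}.
\]
Matching this with the target prefactor $B^{n+1}/(LR)^2$ in the statement, the remaining task is to identify the above $q$-sum as $|\det M|\, R^{n-1}\, L^{2n}\, \CI(w_R)\, \mathfrak{S}(\CW; L, \blambda)$ plus a controlled error.

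The crux is to decouple the arithmetic and archimedean factors. For the oscillatory integral, I would exploit Heath-Brown's key property of the $h$-function: as $Q \to \infty$ with $q$ bounded, $\widehat{\CI}_q(\widehat{w}; \boldsymbol{0}) = \int_{\BR^{n+1}} \widehat{w}(\bs)\, h(q/Q, \widetilde{F}(\bs))\,d\bs$ is approximated by the Leray-type integral supported on $\{\widetilde{F} = 0\}$, with a quantitative rate in $Q$. Summing over $q$ (using the normalisation properties of $h$) and undoing the scalings $\bs \mapsto \bt \mapsto \bx$ produces precisely $\CI(w_R)$. For the arithmetic part, I would use the factorisation $S_{q,L,\blambda}(\boldsymbol{0}) = S^{(1)}_{q,L,\blambda}(\boldsymbol{0}) \cdot S^{(2)}_{q,L,\blambda}(\boldsymbol{0})$ from Lemma \ref{le:sqdecomp}, invoke Proposition \ref{prop:S1} (the Salié-type evaluation) across good primes $\gcd(q, 2L\Delta_F) = 1$, and use a direct evaluation of $S^{(2)}$ over bad primes. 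A standard computation analogous to \cite[Lemma 30]{H-Bdelta} then identifies the local factor at each prime $p$ with the density $\sigma_p(\CW; L, \blambda)$ of \eqref{eq:sigmapW}, so that the completed Euler product equals $\mathfrak{S}(\CW; L, \blambda)$.

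For the error, note that $h(q/Q, \cdot)$ forces $q \leq Q$ on the support of $\widehat{w}$ (where $|\widetilde{F}(\bs)| \ll 1$), so the $q$-sum is effectively truncated. Applying the uniform bound $S_{q,L,\blambda}(\boldsymbol{0}) \ll_\eps L^n q^{(n+3)/2 + \eps} \gcd(q,L)^{1/2}$ from Theorem \ref{thm:sqsum} together with the simpler and trivial estimates from Corollary \ref{co:simplehardest}, one checks that both the tail of the Euler-product completion and the Leray approximation error contribute at the level $Q^{-(n-3)/2 + \eps}$ relative to the main term. Since $Q = B/(LR) \asymp B^{(1+\tau)/2}$ by hypothesis \eqref{eq:tau}, this converts to the advertised rate $B^{(3-n)(1+\tau)/4 + \eps}/(L^n R^{n-1})$ after reinserting the prefactor $B^{n+1}/(LR)^2$.

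The main obstacle, in my view, is uniformity in $R$ and $L$. The Leray-limit approximation of $\widehat{\CI}_q(\widehat{w};\boldsymbol{0})$ must not lose any extra $R$-factors beyond the Jacobian of $\bs \mapsto \bx$, which crucially exploits the level-lowering effect of the weight $w_2$ (cf.\ \eqref{eq:t1t0}) to confine $\widetilde{F}(\bs)$ to a region of size $O(1)$. Parallel care is needed at the arithmetic side: the Euler-product identification of $\sum_q S_{q,L,\blambda}(\boldsymbol{0})/q^{n+1}$ with $L^{2n}\,\mathfrak{S}$ must be done with constants uniform in $L$ and $\blambda$, using the $L$-uniformity already secured in Proposition \ref{prop:S2} to absorb the bad-prime contributions without spurious growth. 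These two uniformity issues are precisely where extra care is required beyond a direct transcription of Heath-Brown's original argument.
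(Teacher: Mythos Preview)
Your overall strategy—separate the oscillatory integral from the arithmetic sum, show each converges to the expected limit with a rate $Q^{(3-n)/2+\eps}$, and control the tail via Theorem \ref{thm:sqsum}—is correct and matches the paper's proof in \S\ref{se:oscint2}--\S6.3. The treatment of $\widehat{\CI}_q(\widehat{w};\boldsymbol{0})$ is essentially the same: the paper makes the Leray approximation explicit via the substitution $\sigma = R^2 u_0^2 G(y_1,\by)$ and then applies \cite[Lemma~9]{H-Bdelta} to obtain $\int J(R;\sigma)w_2(\sigma)h(q/Q,\sigma)\,d\sigma = J(R;0) + O_N((q/Q)^N)$ uniformly in $q$ (Theorem \ref{thm:Iq}), which is precisely the quantitative Leray limit you describe.

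Where you diverge from the paper is in the identification of the singular series. You propose to go through the decomposition $S_{q,L,\blambda}(\boldsymbol{0}) = S^{(1)}\cdot S^{(2)}$ of Lemma \ref{le:sqdecomp}, using Proposition \ref{prop:S1} for $S^{(1)}$ and a ``direct evaluation of $S^{(2)}$'' for the bad primes. The paper does \emph{not} do this: Proposition \ref{prop:S2} supplies only an upper bound for $S^{(2)}$, never an evaluation, so your route would require new work. Instead, the paper (Theorem \ref{thm:singser}) bypasses the $q_1/q_2$ split entirely and proves that $S_{q,L,\blambda}(\boldsymbol{0})$ is multiplicative as a function of the modulus $qL^2$; a standard Ramanujan-sum telescoping then gives, for each prime $p$, $\sum_{t\geq 0} \CS_{p^t}/p^{(n+1)t} = p^{2nm_p}\sigma_p(\CW;L,\blambda)$, whence $\sum_q S_{q,L,\blambda}(\boldsymbol{0})/q^{n+1} = L^{2n}\mathfrak{S}(\CW;L,\blambda)$ exactly. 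This is both cleaner and automatically uniform in $L,\blambda$, whereas your approach would need to carry out the bad-prime evaluation and verify it assembles correctly with the good-prime Euler factors—feasible, but an extra step you have not spelled out.
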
 
 
 In subsections \S\ref{se:oscint2} \S\ref{se:singser} we assume $n\geqslant 3$.
	\subsection{The singular integrals $I_{q,L,\blambda}(w;\boldsymbol{0})$}\label{se:oscint2}
	 We recall from \eqref{eq:IqL2} that 
	\begin{align*}
		I_{q,L,\blambda}(w_{B,R};\boldsymbol{0})&=\frac{1}{L^{n+1}|\det M|}\int_{\BR^{n+1}} w_{B,R}(M^{-1}\bt)h\left(\frac{q}{Q},\frac{R^2}{B^2}\widetilde{F}(\bt)\right)\operatorname{d}\bt\\ &=\frac{B^{n+1}}{L^{n+1}|\det M|}\CJ_{q}(R;w),
	\end{align*}
	where $$\CJ_{q}(R;w):=\int_{\BR^{n+1}}w_1\left(R\frac{u_j}{u_0},j\geqslant 2\right)w_2(R^2\widetilde{F}(\bu))w_3(u_0)h\left(\frac{q}{Q},R^2\widetilde{F}(\bu)\right)\operatorname{d}\bu.$$
	The remaining part is to analyse $\CJ_{q}(R;w)$ and relate it to the (weighted) singular integral and real density.
	
	\subsubsection{Relation with the volume integral}\label{se:volint} 
  We write $\widetilde{F}(\bu)=u_0^2 G(y_1,\by)$ where $y_i=\frac{u_i}{u_0},1\leqslant i\leqslant n,\by=(y_2,\ldots,y_n)$ and $G$ satisfies $G(y_1,\by)=y_1+F_2\left(y_2,\ldots,y_n\right)$ by Lemma \ref{change of variables}. \begin{theorem}\label{thm:Iq}
		We have
$$I_{q,L,\blambda}(w_{B,R}^{\pm};\boldsymbol{0})=\frac{B^{n+1}}{L^{n+1}R^2}\left(\CI(w_R)+O_{N}\left(\left(\frac{q}{Q}\right)^N\right)\right),$$
  	where we define  $w_R:\BR^{n-1}\to\BR$ by \begin{equation}\label{eq:wR}
  	    w_R(\by)=w_1(Ry_j,2\leqslant j\leqslant n)
  	\end{equation}  and \begin{equation}\label{eq:leadingctsI}
  	    \CI(w_R):= |\det M|^{-1}\int_{\BR\times\BR^{n-1}}w_{R}(\by)u_0^{n-2}w_3(u_0)\left(\frac{\partial G}{\partial y_1}\right)^{-1}(0,\by)\operatorname{d}u_0\operatorname{d}\by. 
  	\end{equation}
	\end{theorem}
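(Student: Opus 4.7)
The goal is to evaluate $\CJ_q(R;w)$ up to an error $O_N((q/Q)^N)$, since the identity $I_{q,L,\blambda}(w_{B,R};\boldsymbol{0}) = \frac{B^{n+1}}{L^{n+1}|\det M|}\CJ_q(R;w)$ has already been established just above the statement. The strategy is two successive changes of variables, followed by the standard ``delta-symbol'' identity for Heath-Brown's $h$.

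First I would substitute the projective coordinates $y_i = u_i/u_0$ for $1 \leq i \leq n$ while keeping $u_0$, whose Jacobian contributes $|u_0|^n$. Using $\widetilde{F}(\bu) = u_0^2 G(y_1, \by)$, this rewrites
\[
\CJ_q(R;w) = \int w_1(Ry_2, \ldots, Ry_n)\, w_2(R^2 u_0^2 G)\, w_3(u_0)\, h\!\left(\tfrac{q}{Q}, R^2 u_0^2 G\right) |u_0|^n\, du_0\, dy_1\, d\by.
\]
Next, for fixed $u_0 \ne 0$ (enforced by the support of $w_3$) and fixed $\by$, I substitute $z = R^2 u_0^2 G(y_1, \by)$. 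Since $\partial G/\partial y_1 \ne 0$ (in fact it equals $1$ by Lemma \ref{change of variables}), this is a diffeomorphism in $y_1$ with $dy_1 = dz /(R^2 u_0^2\, \partial G/\partial y_1)$. A factor $R^{-2}$ appears and the integral becomes
\[
\frac{1}{R^2} \int w_R(\by)\, w_3(u_0)\, |u_0|^{n-2} \left[\int_{\BR} w_2(z)\, h\!\left(\tfrac{q}{Q}, z\right) \left(\tfrac{\partial G}{\partial y_1}\right)^{-1}\!\!\bigl(y_1(z, \by), \by\bigr) dz\right] du_0\, d\by.
\]

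The final step invokes the key property of $h$ from \cite[\S4--\S7]{H-Bdelta}: for any smooth compactly supported $\phi:\BR\to\BR$ and any $N\geq 0$,
\[
\int_{\BR} h(x, z)\, \phi(z)\, dz = \phi(0) + O_N\!\bigl(x^N \|\phi\|_N\bigr).
\]
Applied to the bracketed $z$-integral, whose integrand is smooth and compactly supported in $z$ thanks to $w_2$, with $x = q/Q$, and using $w_2(0) = 1$, the inner integral collapses to $(\partial G/\partial y_1)^{-1}(y_1(0,\by),\by) + O_N((q/Q)^N)$. At $z=0$ we are on $\{G=0\}$, which is how I read the notation ``$(0, \by)$'' in the theorem statement; in our coordinates where $\partial G/\partial y_1 \equiv 1$ this distinction is immaterial. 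Since the remaining outer integration over $(u_0, \by)$ is over a bounded set, the error stays $O_N((q/Q)^N)$; comparing with \eqref{eq:leadingctsI} gives $\CJ_q(R;w) = |\det M|\, R^{-2}\bigl(\CI(w_R) + O_N((q/Q)^N)\bigr)$, which multiplied by $B^{n+1}/(L^{n+1}|\det M|)$ is exactly the claim.

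\emph{Main obstacle.} The only substantive technical point is ensuring the error in the $h$-identity is uniform in $(u_0, \by)$ over the compact supports. This requires bounding Sobolev norms (in $z$, with parameter dependence on $\by$) of the composite $z \mapsto w_2(z)(\partial G/\partial y_1)^{-1}(y_1(z,\by), \by)$. Because $G$ is affine-linear in $y_1$, the implicit map $y_1(z, \by) = z/(R^2 u_0^2) - F_2(\by)$ is explicit and $(\partial G/\partial y_1)^{-1} = 1$, so the required bounds are trivial; in a more general setting one would appeal to the implicit function theorem.
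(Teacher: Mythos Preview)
Your proposal is correct and follows essentially the same route as the paper: pass to the coordinates $(u_0,y_1,\by)$, substitute $\sigma=R^2u_0^2G(y_1,\by)$ (your $z$), and invoke Heath-Brown's identity $\int h(x,\sigma)\phi(\sigma)\,d\sigma=\phi(0)+O_N(x^N)$ (this is \cite[Lemma 9]{H-Bdelta}, which is the precise reference for the property you quote). The only cosmetic difference is that the paper packages the $\by$-integral as $J(R;\sigma)$ before applying Lemma 9, whereas you apply it pointwise in $(u_0,\by)$ and then integrate; since $\partial G/\partial y_1\equiv 1$ both orderings are equivalent and the uniformity concern you flag is indeed trivial here.
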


\begin{proof}
    	 Observe that \begin{equation}\label{eq:partialF}
		\frac{\partial G}{\partial y_1}(y_1,\by)=1.
	\end{equation}
 Hence, for any such fixed $u_0$ in the domain of integration, $y_1$ can be uniquely solved by e.g. the implicit coordinate $\sigma:=R^2u_0^2 G(y_1,\by)$. So, on defining
	\begin{equation}\label{eq:J1}
		J(R;\sigma)= J(R;0)=\int_{\BR^{n-1}}w_{R}(\by)\operatorname{d}\by = \int_{\BR^{n-1}}w_{R}(\by)\left(\frac{\partial G}{\partial y_1}\right)^{-1}(\sigma,\by)\operatorname{d}\by,
	\end{equation}
 we obtain
	\begin{align*}
		\CJ_q(R;w)&=\int_{\BR^{n+1}}w_R(\by)w_2(R^2u_0^2G(y_1,\by))u_0^n w_3(u_0)h\left(\frac{q}{Q},R^2u_0^2G(y_1,\by)\right)\operatorname{d}u_0\operatorname{d}y_1\operatorname{d}\by\\ &=\frac{1}{R^2}\int_{\BR}u_0^{n-2}w_3(u_0)\operatorname{d}u_0\int_{\BR^{n-1}\times\BR}w_R(\by)w_2(\sigma)h\left(\frac{q}{Q},\sigma\right)\left(\frac{\partial G}{\partial y_1}\right)^{-1}(\sigma,\by)\operatorname{d}\by\operatorname{d}\sigma\\ &=\frac{1}{R^2}\int_{\BR}u_0^{n-2}w_3(u_0)\operatorname{d}u_0\int_{\BR} J(R;\sigma)w_2(\sigma)h\left(\frac{q}{Q},\sigma\right)\operatorname{d}\sigma.
	\end{align*}
	By \cite[Lemma 9]{H-Bdelta}, we have
  \begin{align*}
     \int_{\BR} J(R;\sigma)w_2(\sigma)h\left(\frac{q}{Q},\sigma\right)\operatorname{d}\sigma&=J(R;0)+O_N\left(\left(\frac{q}{Q}\right)^N \right)
 \end{align*}
 where we  recall that $w_2(0)=1$.
	So the leading term of $\CJ_q(R;w)$ equals \begin{equation*}
\begin{split}
    	&|\det M|^{-1}J(R;0)\int_{\BR}u_0^{n-2}w_3(u_0)\operatorname{d}u_0\\ =& |\det M|^{-1}\int_{\BR\times\BR^{n-1}}w_{R}(\by)u_0^{n-2}w_3(u_0)\left(\frac{\partial G}{\partial y_1}\right)^{-1}(0,\by)\operatorname{d}u_0\operatorname{d}\by=\CI(w_R). \qedhere
     \end{split}
	\end{equation*} 
 \end{proof}
 \begin{remark}\label{rmk:realTamagawa}
     In the ``unweighted'' setting, the weight function $w_3$ in the integral $\int_{\BR}u_0^{n-2}w_3(u_0)\operatorname{d}u_0$ is replaced by the condition $\max_{0\leqslant i\leqslant n}|u_i|\leqslant 1$, and \begin{equation}\label{eq:singintcal}
        \int_{\max_{0\leqslant i\leqslant n}|u_i|\leqslant 1} u_0^{n-2}\operatorname{d}u_0=\frac{2}{n-1}\max(1,|y_j|,2\leqslant j\leqslant n)^{-(n-2)}.
     \end{equation} The factor $(n-1)^{-1}$ equals \emph{Peyre's constant} of the quadric, see \cite[Définition 2.4, Lemme 5.4.8]{Peyre}. The term $\max(1,|y_j|)^{-(n-2)}$, together with $|\det M|^{-1}\left(\frac{\partial G}{\partial y_1}\right)^{-1}(0,\by)$, form the density function of the real Tamagawa measure (within the real neighbourhood $V(\BR)\cap (t_0\neq 0)$ with local coordinates $\by=(\frac{t_2}{t_0},\ldots,\frac{t_n}{t_0})$) with respect to our choice of the norm $\|\cdot\|_{\bxi}$ (see \cite[\S5 Définition 5.1, Lemme 5.4.4]{Peyre} where it is termed \emph{Leray measure}). As for $\CI(w_R)$, due to the zoom condition reflected in the weight function $w_R$ \eqref{eq:wR} which prescribes a real region of diameter $O(R^{-1})$, $|y_j|<1,2\leqslant j\leqslant n$ as long as $R$ is large enough. 
Consequently, up to constant, $\CI(w_R)$ is the volume integral weighted by $w_3$ applied on $w_R$.
	 \end{remark}

	\subsubsection{Relation with the singular integrals}\label{se:expsingint}
	In this part we relate the term $\CI(w_R)$ \eqref{eq:leadingctsI} to the singular integrals appearing in the classical circle method, an ``unweighted'' one being $\CI_R$ \eqref{eq:singint}. Via a Fourier inversion procedure (cf. \cite[p. 259]{Birch}), we can write
	\begin{align*}
		J(R;0)=\int_{\BR^2}w_2(\phi)\e\left(\phi z\right)\operatorname{d}\phi\operatorname{d}z\int_{\BR^{n-1}}w_R(\by)\left(\frac{\partial G}{\partial y_1}\right)^{-1}(0,\by)\operatorname{d}\by,
	\end{align*} where for the inner integral $y_1$ is solved by the implicit coordinate $z=G(y_1,\by)$.
	Going back to the coordinate system $\bu$, then back to $\bx:=M^{-1}\bu$, we obtain
	\begin{equation}\label{eq:CIwR}
		\begin{split}
			\CI(w_R) =&|\det M|^{-1}\int_{\BR^{n+1}\times\BR} w_R\left(\frac{u_j}{u_0},j\geqslant 2\right) w_2 (\widetilde{F}(\bu)) w_3(u_0)\e\left(\theta \widetilde{F}(\bu)\right)\operatorname{d}\bu\operatorname{d}\theta\\ =&\int_{\BR^{n+1}\times\BR} w_{R}\left(\frac{t_j}{t_0}(\bx),j\geqslant 2\right) w_2(F(\bx))w_3(t_0(\bx))\e\left(\theta F(\bx)\right)\operatorname{d}\bx\operatorname{d}\theta,
   \end{split}
	\end{equation} where we recall $w_R$ \eqref{eq:wR}.
	We thus re-encounter the classical (weighted) singular integrals whose integration domain is restricted to the one prescribed by our real zoom condition (i.e. by the function $w_R$).

 Conversely, we have
 \begin{equation}\label{eq:ciR}
     \begin{split}
         \CI_R&=|\det M|^{-1}\int_{\substack{\by\in\BR^{n-1}\\ R\by\in U_0}}\left(\frac{\partial G}{\partial y_1}\right)^{-1}(0,\by)\operatorname{d}\by \int_{\max_{0\leqslant i\leqslant n}|u_i|\leqslant 1} u_0^{n-2}\operatorname{d}u_0\\ &= \frac{2}{n-1}|\det M|^{-1}\int_{\substack{\by\in\BR^{n-1}\\ R\by\in U_0}}\frac{1}{\max(1,|y_j|,2\leqslant j\leqslant n)^{(n-2)}}\left(\frac{\partial G}{\partial y_1}\right)^{-1}(0,\by)\operatorname{d}\by.     \end{split} 
     \end{equation} 
By Remark \ref{rmk:realTamagawa}, the term $|\det M|^{-1}\int_{\cdots}$ equals the real Tamagawa measure (with respect to the real metric $\|\cdot\|_{\bxi}$) of the real neighbourhood $\CU(R,\bxi)$.
	\subsection{The singular series $\sum\frac{S_{q,L,\blambda}(\boldsymbol{0})}{q^{n+1}}$}\label{se:singser}
	We recall $\sigma_{p}(\CW;L,\blambda)$ \eqref{eq:sigmapW} and $\mathfrak{S}(\CW;L,\blambda)$ \eqref{eq:singserW}.
	The goal of this part is to prove:
	\begin{theorem}\label{thm:singser}
		 We have
		$$\sum_{q=1}^{\infty}\frac{S_{q,L,\blambda}(\boldsymbol{0})}{q^{n+1}}=L^{2n}\mathfrak{S}(\CW;L,\blambda).$$
	\end{theorem}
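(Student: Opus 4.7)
The plan is to prove the identity via a telescoping argument that matches $\sum_{q \mid Q} S_{q,L,\blambda}(\boldsymbol{0})/q^{n+1}$ against a modular solution count for large $Q$, and then invokes the Chinese Remainder Theorem to factorise this count into the local densities $\sigma_p$.

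First I would rewrite $S_{q,L,\blambda}(\boldsymbol{0})$ via the substitution $\bv := L\bsigma + \blambda$. Using the identity $F(L\bsigma + \blambda) = L\bigl(H_{\blambda,L}(\bsigma) + LF(\bsigma)\bigr)$ established in Lemma \ref{le:deltacount}, this bijects residues $\bsigma \bmod qL$ with residues $\bv \bmod qL^2$ satisfying $\bv \equiv \blambda \bmod L$; the sieve condition $L \mid H_{\blambda,L}(\bsigma)$ becomes $L^2 \mid F(\bv)$, and the character $\e_{qL}(a(H_{\blambda,L}(\bsigma) + LF(\bsigma)))$ equals $\e_q(aF(\bv)/L^2)$, where $F(\bv)/L^2 \bmod q$ is well-defined on residues $\bv \bmod qL^2$ since $F(\bv + qL^2\bw) \equiv F(\bv) \bmod qL^2$. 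Performing the inner sum over $a$ coprime to $q$ collapses the exponential into a Ramanujan sum $c_q(m) := \sum_{(a,q)=1} \e_q(am)$:
\[
S_{q,L,\blambda}(\boldsymbol{0}) = \sum_{\substack{\bv \bmod qL^2 \\ \bv \equiv \blambda \bmod L \\ L^2 \mid F(\bv)}} c_q\!\left(F(\bv)/L^2\right).
\]

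Next, for $Q \in \BN$ and any $q \mid Q$, lifting $\bv$ from residues mod $qL^2$ to residues mod $QL^2$ contributes a factor $(Q/q)^{n+1}$, which cancels the $q^{n+1}$ in the denominator. Swapping the order of summation and applying the classical identity $\sum_{q \mid Q} c_q(m) = Q \cdot \mathbf{1}[Q \mid m]$ yields the telescoping relation
\[
\sum_{q \mid Q} \frac{S_{q,L,\blambda}(\boldsymbol{0})}{q^{n+1}} = \frac{1}{Q^n}\,\#\{\bv \bmod QL^2 : QL^2 \mid F(\bv),\; \bv \equiv \blambda \bmod L\}.
\]
Taking $Q = \prod_{p \leq X} p^k$ with $X \geq \max\{p : p \mid L\}$ and $k \geq \max_p m_p$, CRT factorises $QL^2 = \prod_{p \leq X} p^{k + 2m_p}$ (with $m_p = 0$ for $p \nmid L$), so the count on the right factors into local counts. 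Extracting the factor $\prod_p p^{2n m_p} = L^{2n}$ gives
\[
\sum_{q \mid Q} \frac{S_{q,L,\blambda}(\boldsymbol{0})}{q^{n+1}} = L^{2n} \prod_{p \leq X} \frac{\#\{\bv \bmod p^{k+2m_p} : F(\bv) \equiv 0,\; \bv \equiv \blambda \bmod p^{m_p}\}}{p^{n(k + 2m_p)}}.
\]

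Finally, letting $k, X \to \infty$, the right-hand side tends to $L^{2n} \mathfrak{S}(\CW; L, \blambda)$ by the definition \eqref{eq:sigmapW}, while the left-hand side tends to $\sum_{q=1}^\infty S_{q,L,\blambda}(\boldsymbol{0})/q^{n+1}$: the uniform bound $|S_{q,L,\blambda}(\boldsymbol{0})| \ll_\varepsilon L^n q^{(n+3)/2 + \varepsilon} \gcd(q,L)^{1/2}$ from Theorem \ref{thm:sqsum} gives $|S_{q,L,\blambda}(\boldsymbol{0})|/q^{n+1} \ll L^{n+1/2} q^{-(n-1)/2 + \varepsilon}$, which is summable for $n \geq 3$. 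The main technical care is in the first step, verifying compatibility of the substitutions and well-definedness of $F(\bv)/L^2 \bmod q$; the remaining steps are routine bookkeeping with the Ramanujan sum identity and CRT.
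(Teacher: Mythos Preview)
Your argument is correct and follows the same route as the paper: the change of variables $\bv = L\bsigma + \blambda$, the Ramanujan-sum telescoping to a modular solution count, and the CRT factorisation into the local densities $\sigma_p$ are exactly the paper's steps; the only organisational difference is that the paper first establishes multiplicativity of $S_{q,L,\blambda}(\boldsymbol{0})$ in $q$ and telescopes one prime at a time, whereas you telescope globally over $q \mid Q$ and apply CRT to the resulting count. One minor slip: the pointwise bound from Theorem~\ref{thm:sqsum} gives $|S_{q,L,\blambda}(\boldsymbol{0})|/q^{n+1} \ll L^{n+1/2} q^{-(n-1)/2+\varepsilon}$, which is summable for $n \geq 4$ but not for $n=3$.
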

	\begin{proof}
		We recall that \begin{align*}
			S_{q,L,\blambda}(\boldsymbol{0})&=\sum_{\substack{a\bmod q\\(a,q)=1}}\sum_{\substack{\bsigma\in(\BZ/qL\BZ)^{n+1}\\ H_{\boldsymbol{\lambda},L}(\boldsymbol{\sigma})\equiv 0\bmod L}}\textup{e}_{qL}\left(a\left(H_{\boldsymbol{\lambda},L}(\boldsymbol{\sigma})+LF(\boldsymbol{\sigma})\right)\right)\\
			&=\sum_{\substack{a\bmod q\\(a,q)=1}}\sum_{\substack{\bsigma\in(\BZ/qL\BZ)^{n+1}\\ F(L\bsigma+\blambda)\equiv 0\bmod L^2}}\e_{qL^2}\left(aF(L\bsigma+\blambda)\right)\\ &=\sum_{\substack{a\bmod q\\(a,q)=1}}\sum_{\substack{\bxi\bmod qL^2\\ \bxi\equiv \blambda\bmod L, F(\bxi)\equiv 0\bmod L^2}}\e_{q}\left(a \frac{F(\bxi)}{L^2}\right).
		\end{align*}
		We now prove that $S_{q,L,\blambda}(\boldsymbol{0})$ is multiplicative, viewed as an exponential sum of modulus $qL^2$. For every fixed $q$, suppose $$qL^2=u_1u_2,$$ with $(u_1,u_2)=1$. Then $L,q$ also admit the decompositions into products of coprime integers $$L=l_1l_2,\quad q=q_1q_2$$ with $l_i=\gcd(u_i,L),q_i=\gcd(u_i,q)$. 
  Note that $$l_i^2=\gcd(u_i,L^2),\quad u_i=q_il_i^2,\quad i=1,2.$$ In the following we write $\overline{u_1}$ for the multiplicative inverse of $u_1$ modulo $u_2$ and $\overline{u_2}$ for the multiplicative inverse of $u_2$ modulo $u_1$. With this we can write $$\bxi=u_1\overline{u_1}\bxi_2+u_2\overline{u_2}\bxi_1
  $$ with $\bxi_i\bmod 
  u_i,i=1,2$. Then $$\bxi\equiv\blambda\mod L\Longleftrightarrow \bxi_i\equiv \blambda\mod l_i,i=1,2,$$ where we use the equality $$u_1\overline{u_1}+u_2\overline{u_2}\equiv 1\mod u_1u_2.$$ Moreover, $$F(\bxi)\equiv 0\mod L^2\Longleftrightarrow F(\bxi_i)\equiv 0\mod l_i^2,i=1,2.$$
		On writing $$a=q_1a_2+q_2a_1$$ with $a_i \mod q_i$, $i=1,2$ and $$\gcd(a,q)=1\Longleftrightarrow \gcd(a_i,q_i)=1,i=1,2,$$ we have
		$$S_{q,L,\blambda}(\boldsymbol{0})=S_1 S_2,$$ where for $i=1,2$, $$S_i=\sum_{\substack{a_i\bmod q_i\\(a_i,q_i)=1}}\sum_{\substack{\bxi_i\bmod u_i\\ \bxi_i\equiv \blambda\bmod l_i, F(\bxi_i)\equiv 0\bmod l_i^2}}\e_{q_i}\left(a_i \frac{u_{3-i}^2\overline{u_{3-i}}^2}{l_{3-i}^2}\frac{F(\bxi_i)}{l_i^2}\right).$$
		Note that $u_{3-i}\overline{u_{3-i}}\equiv 1\bmod u_i$, so it is coprime to $q_i$, and hence a further change of variables $a_i\mapsto a_i \frac{u_{3-i}^2\overline{u_{3-i}}^2}{l_{3-i}^2}$ yields $$S_i=\sum_{\substack{a_i\bmod q_i\\(a_i,q_i)=1}}\sum_{\substack{\bxi_i\bmod u_i\\ \bxi_i\equiv \blambda\bmod l_i, F(\bxi_i)\equiv 0\bmod l_i^2}}\e_{q_i}\left(a_i \frac{F(\bxi_i)}{l_i^2}\right).$$
		
		Consequently, we can write,  $$\sum_{q=1}^{\infty}\frac{S_{q,L,\blambda}(\boldsymbol{0})}{q^{n+1}}=\prod_p \sum_{t=0}^{\infty}\frac{\CS_{p^t}}{p^{(n+1)t}},$$ where 
		\begin{equation}\label{eq:Spt}
			\CS_{p^t}:=\sum_{\substack{a\bmod p^t\\ (a,p)=1}}\sum_{\substack{\bxi\bmod p^{t+2m_p}\\ \bxi\equiv\blambda\bmod p^{m_p},F(\bxi)\equiv 0\bmod p^{2m_p}}}\e_{p^t}\left(a\frac{F(\bxi)}{p^{2m_p}}\right),
		\end{equation}
		with $m_p:=\operatorname{ord}_p(L)$.
		By a standard analysis of Ramanujan sums, we have
\begin{align*}		
  \sum_{t=0}^{k}\frac{\CS_{p^t}}{p^{(n+1)t}} &=&\frac{\#\{\bxi\bmod p^{k+2m_p}:\bxi\equiv \blambda\bmod p^{m_p},F(\bxi)\equiv 0\bmod p^{k+2m_p}\}}{p^{nk}}\\
  &=&\frac{p^{2nm_p}\#\{\bxi\bmod p^{k+2m_p}:\bxi\equiv \blambda\bmod p^{m_p},F(\bxi)\equiv 0\bmod p^{k+2m_p}\}}{p^{n(k+2m_p)}}.
  \end{align*}
		We obtain by taking $k\to\infty$ 
		$$\sum_{t=0}^{\infty}\frac{\CS_{p^t}}{p^{(n+1)t}}=p^{2nm_p}\sigma_{p}(\CW;L,\blambda),$$
		where we recall \eqref{eq:sigmapW}. This finishes the proof of Theorem \ref{thm:singser}. 
	\end{proof}
	
	\subsection{Proof of Theorem \ref{thm:c=0}}
	We divide the summation into ranges $q\leq X$ and $q> X$, where we take $X:=B^{\frac{1}{2}(1+\tau) - \varepsilon}$. 
	
	To deal with the range $q> X$, we note that 
	$$\CI(w_R)\ll R^{-(n-1)}.$$
	Combining this with Theorem \ref{thm:Iq}, we have 
	$$I_{q,L,\blambda}(w_{B,R}^{\pm};\boldsymbol{0})\ll \left(\frac{B}{LR}\right)^{n+1}.$$
	Hence, using Theorem \ref{thm:sqsum}, \begin{align*}
		\sum_{T<q\leq 2T}\frac{S_{q,L,\blambda}(\boldsymbol{0})I_{q,L,\blambda}(w_{B,R}^{\pm};\boldsymbol{0})}{(qL)^{n+1}}&\ll \left(\frac{B}{L^2R}\right)^{n+1}\sum_{T<  q\leq 2T}\frac{|S_{q,L,\blambda}(\boldsymbol{0})|}{T^{n+1}}\\ &\ll_\varepsilon \left(\frac{B}{L^2R}\right)^{n+1} L^{n+\varepsilon} T^{\frac{3-n}{2}+\varepsilon},
	\end{align*}
	and so
	\begin{equation}\label{tail of sqciqc sum}
		\sum_{q> X}\frac{S_{q,L,\blambda}(\boldsymbol{0})I_{q,L,\blambda}(w_{B,R}^{\pm};\boldsymbol{0})}{(qL)^{n+1}}\ll_\varepsilon \frac{B^{n+1}}{R^{n+1} L^{n+2}} X^{\frac{3-n}{2}+\varepsilon}
	\end{equation}
	for any $n\geq 4$. By the same argument, we also have that 
	\begin{equation}\label{tail of sing series}
		\f{B^{n+1}}{(LR)^2}\CI(w_R)\l|\sum_{q \leq X}\f{L^{-2n}S_{q,L, \boldsymbol{}\lambda}(\boldsymbol{0})}{q^{n+1}} -\mathfrak{S}(\CW;L,\blambda)\r|\ll \frac{B^{n+1}}{R^{n+1} L^{n+2}} X^{\frac{3-n}{2}+\varepsilon}.
	\end{equation}
	
	When $q \leq X$, we observe that the error term in Theorem \ref{thm:Iq} decays exponentially. We recall that $(LR)^2\ll B^{1-\tau}$ and hence $Q=\frac{B}{LR}\gg B^{\frac{1}{2}(1+\tau)}$. I.e. if $q\leq B^{\frac{1}{2}(1+\tau)-\varepsilon}$, then we have also $q\ll Q^{1-\varepsilon '}$, for some $\varepsilon' >0$. Applying Theorems \ref{thm:Iq} and \ref{thm:singser}, we obtain 
	\begin{equation}
		\sum_{q\leq X}\f{S_{q,L,\blambda}(\boldsymbol{0})I_{q,L,\blambda}(w_{B,R}^{\pm};\boldsymbol{0})}{(qL)^{n+1}} = \f{B^{n+1}}{(LR)^2}\CI(w_R)\sum_{q \leq X}\f{L^{-2n}S_{q,L, \boldsymbol{}\lambda}(\boldsymbol{0})}{q^{n+1}} + O_N(B^{-\eps N}).
	\end{equation}
	Combining with (\ref{tail of sing series}) and (\ref{tail of sqciqc sum}) and redefining $\eps$, we obtain the desired estimate. \qed

 \subsection{Estimates of local densities} We end this section by establishing the following 
\begin{proposition}\label{prop:growthofcirsigma}
Let $\omega^V_{\bxi,\BR}$ (resp. $\omega^V_{f}$) be the real (resp. finite) Tamagawa measure on $V(\BR)$ (resp. $\CV(\widehat{\BZ})$) induced by the adelic norm \eqref{eq:normbxi} \eqref{eq:heightbxi}, so that $\omega^V_{\bxi}=\omega^V_{\bxi,\BR}\times \omega^V_{f}$. 
   Assume that $n\geqslant 4$. Then we have \begin{equation}\label{eq:cirrealtamagawa}
       \omega^V_{\bxi,\BR} \left(\CU(R,\bxi)\right)\asymp R^{-(n-1)},
   \end{equation} and $$L^{-(n-1)-\varepsilon}\ll_{\varepsilon}\omega^V_{f}\left(\CD(L,\bLambda)\right)\ll L^{-(n-1)},\quad \mathfrak{S}(\CW;L,\bGamma) \asymp L^{-n}.$$
\end{proposition}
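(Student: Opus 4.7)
The plan is to treat the three estimates separately: the real density is a direct volume computation, while the $p$-adic density and the singular series factor into local contributions that reduce to Hensel-type counts at smooth points, together with a bounded correction at the finitely many primes of bad reduction.

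For the real Tamagawa measure I would invoke the explicit expression identified in Remark \ref{rmk:realTamagawa} and equation \eqref{eq:ciR}, namely
$$\omega^V_{\bxi,\BR}(\CU(R,\bxi)) = |\det M|^{-1}\int_{\{\by:\,R\by \in U_0\}} \max(1, |y_j|,\,2\leq j\leq n)^{-(n-2)}\left(\frac{\partial G}{\partial y_1}\right)^{-1}(0,\by)\operatorname{d}\by.$$
Choosing $R_0$ large enough that $\{\by : R\by \in U_0\}$ is contained in $[-1,1]^{n-1}$ makes the $\max$ factor identically $1$, and by \eqref{eq:partialF} the Jacobian $\partial G/\partial y_1$ is identically $1$. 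The change of variables $\bz = R\by$ then gives an exact equality $\omega^V_{\bxi,\BR}(\CU(R,\bxi)) = |\det M|^{-1}\operatorname{vol}(U_0)\, R^{-(n-1)}$ for all $R \geq R_0$, which is \eqref{eq:cirrealtamagawa}.

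For the finite Tamagawa measure I would factor $\omega^V_f(\CD(L,\bLambda))$ as a product over primes. The restricted product $\prod_{p \nmid L}\omega^V_p(V(\BQ_p))$ is absolutely convergent by Peyre's choice of convergence factors together with Weil's bound $\#V(\BF_p) = p^{n-1} + O(p^{(n-2)/2})$, hence bounded above and below by positive constants uniformly in $L$. For each $p \mid L$ I would use the $p$-adic Leray form in local analytic coordinates around the smooth point $\bLambda_p\in V(\BQ_p)$: for $m_p$ larger than a bound depending only on $p$ and $F$, the measure equals $|\partial F/\partial x_i(\bLambda_p)|_p^{-1}\,p^{-m_p(n-1)}$ for a suitable index $i$. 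At primes $p \nmid 2\Delta_F$ some partial derivative of $F$ is a $p$-adic unit at $\bLambda_p$, giving exactly $p^{-m_p(n-1)}$. The finitely many bad primes $p \mid 2\Delta_F$ dividing $L$ contribute a bounded factor above and a factor $\gg L^{-\varepsilon}$ below, using $|\partial F/\partial x_i|_p^{-1} \leq p^{v_p(\Delta_F)}$ (a consequence of the adjugate of the Gram matrix of $F$ having integer entries together with $\bLambda_p$ being smooth in the generic fibre). Multiplying these local contributions yields the claimed two-sided bound.

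The singular series $\mathfrak{S}(\CW;L,\bGamma) = \prod_p\sigma_p(\CW;L,\bGamma)$ is handled analogously. The key observation is that since $\bGamma\in\CW^o(\BZ/L\BZ)$, at every prime $p \mid L$ of good reduction the reduction of $\bGamma$ is a smooth point of the cone $\CW^o$, so Hensel's lemma gives exactly $p^{(k-m_p)n}$ lifts for each $k \geq m_p$, hence $\sigma_p(\CW;L,\bGamma) = p^{-m_p n}$ on the nose. The product $\prod_{p \nmid L}\sigma_p(\CW)$ converges absolutely to a positive constant by the same Weil bound, and the finitely many bad primes $p \mid L$ contribute bounded factors both above and below, yielding $\mathfrak{S}(\CW;L,\bGamma) \asymp L^{-n}$. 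The main (minor) obstacle throughout is ensuring uniformity at bad primes, which is handled by noting that the set $\{p : p \mid 2\Delta_F\}$ is finite and the relevant $p$-adic constants depend only on $F$, not on $\bLambda$ or $\bGamma$.
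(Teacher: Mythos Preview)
Your overall strategy---direct volume computation at the real place, Hensel/Leray-type local computations at the finite primes, and convergence of the Euler product---matches the paper's. The real part and the singular-series part are fine. However, there is a genuine gap in your treatment of the finite Tamagawa measure $\omega^V_f(\CD(L,\bLambda))$: you misidentify the source of the $L^{-\varepsilon}$ loss in the lower bound. The finitely many primes $p\mid 2\Delta_F$ contribute factors bounded \emph{both} above and below by constants depending only on $F$ (your adjugate argument shows exactly this), so they cannot be responsible for an $L^{-\varepsilon}$. The actual source is Peyre's convergence factors: by \eqref{eq:singserV} the finite Tamagawa measure carries a factor $\prod_{p\mid L}(1-p^{-1})$, and it is this Euler product that sits in the interval $[L^{-\varepsilon},1]$, since $L$ may have as many as $O(\log L/\log\log L)$ distinct prime divisors. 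Your sketch drops these convergence factors at primes $p\mid L$ (you record only the Leray density $|\partial F/\partial x_i|_p^{-1}p^{-m_p(n-1)}$), and taken literally your argument would yield the too-strong conclusion $\omega^V_f(\CD(L,\bLambda))\asymp L^{-(n-1)}$.

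Two smaller points. First, the bound $\#V(\BF_p)=p^{n-1}+O(p^{(n-2)/2})$ is misstated: the true secondary term is $p^{n-2}$, and one has $\sigma_p(V)=1+p^{-1}+O(p^{-3/2})$; the product $\prod_p\sigma_p(V)$ diverges and only becomes absolutely convergent after inserting the convergence factors $(1-p^{-1})$. Second, the paper sidesteps your case analysis at bad primes with a device worth noting: it fixes $L_0$ depending only on $F$ such that the model $\CV$ is regular over $\BZ/L_0\BZ$, subdivides $\CD(L,\bLambda)$ into $O(1)$ pieces of level $L_0L$, and thereby reduces to the case $L_0\mid L$, in which every $p\mid L$ is good for the model and Salberger's result gives $\sigma_p(\CV;L,\bLambda)=p^{-m_p(n-1)}$ exactly. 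Your direct Leray-form approach at bad primes would also work once the convergence-factor issue is fixed, but is slightly heavier.
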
 
 \begin{proof}
     The arguments in \eqref{eq:ciR} and Remark \ref{rmk:realTamagawa} (see also \cite[\S5.4]{Peyre}) show that \begin{equation}\label{eq:cir2}
         \omega^V_{\bxi,\BR} \left(\CU(R,\bxi)\right)=\frac{n-1}{2}\CI_R, 
     \end{equation} which is clearly $\asymp R^{-(n-1)}$.

     We now estimate the non-archimedean local densities. 
     Let us fix an even integer $L_0\in\BN$ (depending only on $V$) such that $\CV$ (and hence $\CW^o$) is regular over $\BZ/L_0\BZ$. Upon decomposing $\CD(L,\bLambda)$ into a disjoint union of finite adelic neighbourhoods modulo $L_0L$ if necessary, the number of which is $\leqslant L_0^{n+1}=O(1)$, we may proceed under the assumption that $L_0\mid L$.
     For any $p\nmid L$, as $\CV\bmod p$ is smooth,  by the Grothendieck--Lefschetz trace formula, we have, for any prime $l\neq p$
     $$\#\CV(\BF_p)=\sum_{i=0}^{2(n-1)}(-1)^i\operatorname{Tr}(\operatorname{Fr}_p|H^i(\CV_{\overline{\BF_p}},\BQ_l).$$
     By the Lefschetz hyperplane theorem, $H^j(\CV_{\overline{\BF_p}},\BQ_l)=0$ when $j$ is odd, and $H^{2i}(\CV_{\overline{\BF_p}},\BQ_l)=\BQ_l(-i)$ when $1\leqslant i\leqslant n-1$, unless the dimension $n-1=2k$ is even, in which case $H^{2k}(\CV_{\overline{\BF_p}},\BQ_l)=\BQ_l(-k)^{\oplus 2}$.
It follows that
     $$\#\CV(\BF_p)=\begin{cases}
         \frac{p^n-1}{p-1} & n\text{ even};\\ \frac{p^n-1}{p-1}
         \pm p^{\frac{n-1}{2}} & n\text{ odd}.
     \end{cases}
     $$ Let us define the usual $p$-adic density of 
     $V(\BQ_p)$ by $$\sigma_p(V):= \lim_{k\to\infty}\frac{\#\CV(\BZ/p^k\BZ)}{p^{(n-1)k}}.$$ In particular, when $n\geqslant 4$, by Hensel's lemma, $$\sigma_p(V)=\frac{\#\CV(\BF_p)}{p^{\dim V}}=1+p^{-1}+O\left(p^{-\frac{3}{2}}\right). $$
     According to Peyre \cite{Peyre}, \begin{equation}\label{eq:singserV}
		\omega^V_{f}\left(\CD(L,\bLambda)\right)=\prod_{p\mid L}\left(1-\frac{1}{p}\right)\sigma_{p}(\CV;L,\boldsymbol{\Lambda})\times\prod_{p\nmid L}\left(1-\frac{1}{p}\right)\sigma_p(V),
	\end{equation} where  for $p\mid L$, $$\sigma_{p}(\CV;L,\boldsymbol{\Lambda}):=\lim_{k\to\infty}\frac{\#\left(\CV(\BZ/p^k\BZ)\cap \CD_p(L,\bLambda)\right)}{p^{(n-1)k}},$$ is the ``model measure'' of $\CD_p(L,\bLambda)\subseteq V(\BQ_p)$.   By \cite[Theorem 2.13]{Salberger} (since $\CV$ is regular modulo $L$), \begin{equation}\label{eq:sigmapV1}
	    \sigma_{p}(\CV;L,\boldsymbol{\Lambda})=\frac{1}{p^{(n-1)m_p}}.
	\end{equation}
 Hence $$L^{-(n-1)-\varepsilon}\ll_{\varepsilon}\omega^V_{f}\left(\CD(L,\bLambda)\right)=L^{-(n-1)}\prod_{p\mid L}\left(1-\frac{1}{p}\right)\times\prod_{p\nmid L}\left(1+O(p^{-\frac{3}{2}})\right)\ll L^{-(n-1)}.$$

 Recall \eqref{eq:sigmapW} and \eqref{eq:singserW}. We have
	$$\mathfrak{S}(\CW;L,\bGamma)=\prod_{p\mid L}\sigma_{p}(\CW;L,\bGamma)\prod_{p\nmid L}\sigma_p(\CW),$$ where for every $p\nmid L$, $$\sigma_p(\CW):=\lim_{k\to\infty}\frac{\#\CW(\BZ/p^k\BZ)}{p^{nk}}$$ is the usual $p$-adic density of $\CW(\BZ_p)$. The scheme $\CW^o$ is regular modulo $L$. So again by \cite[Theorem 2.13]{Salberger}, for every $p\mid L$, \begin{align*}
		\sigma_{p}(\CW;L,\bGamma)&=\lim_{k\to\infty}\frac{\#\left\{\bv\in\CW^o(\BZ/p^k\BZ):\bv\equiv\bGamma\bmod p^{m_p}\right\}}{p^{nk}}=\frac{1}{p^{nm_p}}.
	\end{align*} 	By \cite[Lemme 5.4.6]{Peyre}, \begin{equation}\label{eq:sigmapV2}
	    \sigma_p(V)=\frac{1-p^{-(n-1)}}{1-p^{-1}}\sigma_p(\CW).
	\end{equation} So, whenever $n\geqslant 4$, $$\sigma_p(\CW)=1+O(p^{-\frac{3}{2}}).$$ Hence $$\mathfrak{S}(\CW;L,\bGamma)=L^{-n}\prod_{p\nmid L}\left(1+O(p^{-\frac{3}{2}})\right)\asymp L^{-n}.$$ The proof is completed.
 \end{proof}
 
	\section{Proof of main theorems}\label{se:proofmainthm}
	\subsection{Proof of Theorem \ref{thm:countingW}}
	This is now evident in view of Theorems \ref{thm:cnot0}, \ref{thm:c=0} and \eqref{eq:Nwdelta}. \qed
	
	\subsection{Proof of Theorem \ref{main counting result} --  Removing the smooth weights}\label{se:removeweight}
	We now compare $\CI(w^{\pm}_R)$ with $\CI_R$, which is defined in \eqref{eq:singint}. 
	
	\begin{lemma}\label{lem:comparing singular integrals}
		Let $w$ be one of the smooth weights $w_{R}^{\pm}$. Assume that
  $$\sup_{u_j\in R^{-1}[a_j,b_j], j\geq 2}  |F_2(u_2,\ldots, u_n)| \leq 1.$$
  Then 
		$$|\CI(w) - \CI_R| \ll \eta \CI_R.$$ 
	\end{lemma}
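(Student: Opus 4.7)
The plan is to decompose both $\CI(w_R^{\pm})$ and $\CI_R$ as products of a $\by$-integral and a $u_0$-integral, then to compare factor by factor.

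From Lemma \ref{change of variables} we have $G(y_1,\by)=y_1+F_2(y_2,\ldots,y_n)$, so by \eqref{eq:partialF} the factor $(\partial G/\partial y_1)^{-1}(0,\by)$ appearing in \eqref{eq:leadingctsI} is identically $1$. Thus $\CI(w_R^{\pm})=|\det M|^{-1}A^{\pm}B^{\pm}$, where
$$A^{\pm}=\int_{\BR^{n-1}}w_1^{\pm}(Ry_j,\,j\geq 2)\,\op{d}\by=R^{-(n-1)}\int_{\BR^{n-1}}w_1^{\pm}(\bz)\,\op{d}\bz,\quad B^{\pm}=\int_{\BR}w_3^{\pm}(u_0)\,u_0^{n-2}\,\op{d}u_0.$$
Under the lemma's hypothesis, on the affine cone $\widetilde F=0$ one has $u_j=u_0 y_j$ for $j\geq 2$ and $u_1=-u_0 F_2(y_2,\ldots,y_n)$, so $|u_j|\leq|u_0|$ for every $j\geq 1$; hence the constraint $\max_{0\leq i\leq n}|u_i|\leq 1$ in \eqref{eq:ciR} reduces to $|u_0|\leq 1$. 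Writing $V_0=\prod_{j\geq 2}(b_j-a_j)$, this gives $\CI_R=|\det M|^{-1}AB$ with $A=V_0 R^{-(n-1)}$ and $B=\int_{-1}^{1}u_0^{n-2}\,\op{d}u_0$.

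Next I would estimate each factor. Using the defining properties of $w_1^{\pm}$ listed before Lemma \ref{existence of smooth weights}, the symmetric difference between $\operatorname{supp}(w_1^{\pm})$ and $U_0$ has Lebesgue measure $O(\eta)$ while $|w_1^{\pm}|\leq 1$, so $|A^{\pm}-A|\ll \eta R^{-(n-1)}$. Similarly $w_3^{\pm}$ differs from $\mathbf{1}_{[-1,1]}$ only on a boundary region of measure $O(\eta)$ (where $|u_0|^{n-2}=O(1)$) and on the ``hole'' $[-\eta,\eta]$ (contributing at most $\int_{-\eta}^{\eta}|u_0|^{n-2}\,\op{d}u_0=O(\eta^{n-1})=O(\eta)$ for $n\geq 2$), so $|B^{\pm}-B|\ll \eta$.

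Combining via $A^{\pm}B^{\pm}-AB=(A^{\pm}-A)B^{\pm}+A(B^{\pm}-B)$ with $B,B^{\pm}=O(1)$ and $A\asymp R^{-(n-1)}$ yields $|\CI(w_R^{\pm})-\CI_R|\ll \eta R^{-(n-1)}\ll \eta\CI_R$. The argument is essentially routine once the factorisation is recognised; the only delicate point is the small-$u_0$ behaviour of $w_3^{\pm}$, but the vanishing of $u_0^{n-2}$ at $u_0=0$ renders it harmless.
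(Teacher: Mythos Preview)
Your proof is correct and follows essentially the same route as the paper's: both exploit that $(\partial G/\partial y_1)^{-1}\equiv 1$ to factor $\CI(w_R^{\pm})$ and $\CI_R$ as a product of a $\by$-integral and a $u_0$-integral, then compare each factor up to $O(\eta)$. Your treatment is slightly more explicit (in particular you note that the hole $[-\eta,\eta]$ in $w_3^{\pm}$ contributes only $O(\eta^{n-1})$ thanks to the weight $u_0^{n-2}$), whereas the paper simply observes that the set where $w_3^{\pm}$ differs from $\mathbf{1}_{[-1,1]}$ has measure $O(\eta)$.

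One small caveat: your assertion that $|u_j|\leq|u_0|$ for $j\geq 2$ relies on $R^{-1}\max_j(|a_j|,|b_j|)\leq 1$, which is the second condition in \eqref{sizeR0} rather than a consequence of the $F_2$-hypothesis stated in the lemma. The paper's own proof glosses over the same reduction, and in the context where the lemma is applied one always has $R\geq R_0$, so this does not affect correctness; but strictly from the lemma's stated hypothesis alone the step is unjustified.
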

	
	\begin{proof}
  We define $1_{R}$ to be the characteristic function of the conditions \begin{equation*}	y_j \in R^{-1}[a_j,b_j] \textrm{ for all }2\leq j \leq n,\end{equation*} and we define $1_R^{\pm}$ analogously to $w_R^{\pm}$. Similarly to (\ref{eq:leadingctsI}), we have
  \begin{align*}
      \CI_R&=|\det M|^{-1}\int_{\BR^{n-1}}1_{R}(\by)\left(\frac{\partial G}{\partial y_1}\right)^{-1}(0,\by)\operatorname{d}\by\int_{|u_0|\leqslant 1}u_0^{n-2}\operatorname{d}u_0\\ &= |\det M|^{-1}\int_{\substack{\BR\times \BR^{n-1}\\ |u_0|\leqslant 1}}u_0^{n-2}1_R(\by)\op{d}u_0\op{d}\by.
  \end{align*}
   The measure of the set of $\by\in \BR^{n-1}$ for which $w_R^{\pm}(\by)\neq 1_R^{\pm}(\by)$ is $O(\eta R^{1-n})$, and the measure of the set of $u_0\in \BR$ for which $w_3^{\pm}(u_0)\neq 1_{[-1,1]}(u_0)$ is $O(\eta)$. We deduce from \eqref{eq:leadingctsI} that 
		\begin{align*}
			\CI(w) &= |\det M|^{-1}\int_{\BR^{n-1}}w_R^{\pm}(\by)\op{d}\by \int_{\BR}u_0^{n-2}w_3^{\pm}(u_0) \op{d}u_0\\&= |\det M|^{-1} \l(\int_{\BR^{n-1}}1_R(\by) \op{d}\by + O(\eta R^{1-n})\r)\l(\int_{|u_0|\leq 1}u_0^{n-2}\op{d}u_0 + O(\eta)\r)\\
			&=\CI_R+ O(\eta R^{1-n})\\
			&=\CI_R(1+O(\eta)) \qedhere.
		\end{align*}
	\end{proof}
	We now come to the proof of Theorem \ref{main counting result}, which gives an asymptotic formula for $ \CN_{\CW}((R,\bxi),(L,\bGamma);B)$ under the assumption that $0<\tau<1$, where
 $$(LR)^2=O(B^{1-\tau}).$$
 For the proof of Theorem \ref{thm:countingV} we also need an upper bound for $ \CN_{\CW}((R,\bxi),(L,\bGamma);B)$, where $\tau$ is allowed to be negative. We will deduce the following proposition at the same time as we prove Theorem \ref{main counting result}.

 \begin{proposition}\label{propupperbound}
Let $n\geq 4$, $L,R,B\geq 1$, $L\in \mathbb{N}$, $R,B\in \mathbb{R}$ and assume that $(LR)^2=O(B^{1-\tau})$ for some $-1<\tau <1$. Let $\bGamma\in\CW^o(\BZ/L\BZ)$. Then we have
 $$\CN_{\CW}((R,\bxi),(L,\bGamma);B)\ll_\tau  \f{B^{n-1+\varepsilon}}{L^nR^{n-1}}(E_{\tau}(1;B) +1)+ \frac{B^{\frac{(n+1)(1-\tau)}{2}+\varepsilon}}{L^{n+1}R^n},$$
 where $E_{\tau}(1;B)$ is defined as in Theorem \ref{thm:countingW}.

 \end{proposition}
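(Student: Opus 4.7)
The plan is to combine the delta-method upper bound from Theorem \ref{thm:countingW} with a direct lattice-point counting argument; the former yields the first term of the proposition and handles the bulk of the count, while the latter yields the second term and becomes crucial when $\tau\leqslant 0$, where the delta-method error $E_\tau(1;B)$ would otherwise dominate.

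First I would majorise $\CN_{\CW}((R,\bxi),(L,\bGamma);B)$ by $\CN_\CW(w^+_{B,R};(L,\bGamma))$ plus the contribution from integer points $\bx$ with $|t_0(\bx)|\leqslant \eta B/2$ (which escape the majorant because of the hole in $w^+_3$ near zero). Taking $\eta$ to be a fixed positive constant (so that the assumption $\eta^{-1}\ll B^A$ in Theorem \ref{thm:countingW} is trivially satisfied), Theorem \ref{thm:countingW} applied to $\CN_\CW(w^+_{B,R};(L,\bGamma))$ together with the bounds $\CI(w^+_R)\ll R^{-(n-1)}$ (a consequence of the definition \eqref{eq:leadingctsI} and the support of $w^+_R$) and $\mathfrak{S}(\CW;L,\bGamma)\ll L^{-n}$ (Proposition \ref{prop:growthofcirsigma}) produces the first term $\frac{B^{n-1+\varepsilon}}{L^nR^{n-1}}(E_{\tau}(1;B)+1)$ of the proposition.

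Next I would control the small-$t_0$ boundary contribution via lattice point counting. Passing to the coordinates $\bt=M\bx$, the real zoom condition \eqref{real zoom condition in t} gives $|t_j|\ll |t_0|/R\leqslant \eta B/R$ for $j\geqslant 2$, and the identity \eqref{eq:t1t0} (which follows from $F(\bx)=0$) yields $|t_1|\ll |t_0|/R^2\leqslant \eta B/R^2$. So the points in question lie inside a box with side lengths of order $\eta B,\ \eta B/R^2,\ \eta B/R,\ldots,\eta B/R$. Applying Schmidt's lattice-point inequality \cite[Lemma 2]{Schmidt} to the translated lattice $L\BZ^{n+1}+\blambda$ yields an upper bound of the form $\prod_{i=0}^{n}(1+B_i/L)$; combining this with the quadric constraint $F(\bx)=0$ to cut lattice dimension by one and simplifying using the hypothesis $(LR)^2\ll B^{1-\tau}$ should produce the second term $\frac{B^{(n+1)(1-\tau)/2+\varepsilon}}{L^{n+1}R^n}$.

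The main obstacle is this last step: the Schmidt bound splits into several regimes according to which of the ratios $B/L$, $B/(LR)$ and $B/(LR^2)$ exceed $1$, and in each regime one must use the hypothesis $(LR)^2\ll B^{1-\tau}$ together with the quadric structure $F(\bx)=0$ (used both to bound $|t_1|$ a priori and to cut lattice dimension) in order to produce a uniform bound of the claimed form. The precise exponents $(n+1)(1-\tau)/2$, $n+1$, and $n$ emerge as the worst case over these regimes once the hypothesis on $(LR)^2$ is inserted.
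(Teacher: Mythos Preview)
Your first step is correct: applying Theorem~\ref{thm:countingW} with $\eta$ a fixed constant and bounding $\CI(w_R^+)\ll R^{-(n-1)}$, $\mathfrak{S}(\CW;L,\bGamma)\ll L^{-n}$ indeed produces the first term $\frac{B^{n-1+\varepsilon}}{L^nR^{n-1}}(E_\tau(1;B)+1)$.

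The gap is in the second step. With $\eta$ constant, the ``small-$t_0$'' region is simply $|t_0|\ll B$, and the Schmidt lattice count in a box of side-lengths $\asymp B,\,B/R^2,\,B/R,\ldots,B/R$ gives at best
\[
\ll \frac{B^{n+1}}{L^{n+1}R^n}+\text{lower-order terms},
\]
which is larger than the claimed $\frac{B^{(n+1)(1-\tau)/2}}{L^{n+1}R^n}$ by the factor $B^{(n+1)(1+\tau)/2}\gg 1$. The hypothesis $(LR)^2\ll B^{1-\tau}$ cannot help here: it is an \emph{upper} bound on $LR$, so substituting it into a denominator only enlarges the estimate. Your proposed remedy of ``using $F(\bx)=0$ to cut lattice dimension by one'' is not straightforward (the quadric is not an affine subspace, and the congruence condition does not project cleanly), and even if it could be made rigorous it would yield at best $\sim B^n/(L^nR^{n-1})$, which is still too large unless $\tau$ is near $-1$.

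The paper obtains the exponent $(n+1)(1-\tau)/2$ by a different mechanism. Rather than a single majorant $w^+_{B,R}$, it uses the telescoping majorisation
\[
1_{B,R}(\bx)\leqslant \sum_{k\geqslant 0} w^+_{B\eta^k,R}(\bx)
\]
(Lemma~\ref{le:removingsmoothweights}) and applies Theorem~\ref{thm:countingW} at each intermediate scale $B\eta^k$, with the corresponding $\tau_k$ defined by $(LR)^2\asymp (B\eta^k)^{1-\tau_k}$. Only once $\tau_k$ drops to $-1+\theta$, i.e.\ once the effective height has shrunk to $\asymp B^{(1-\tau)/(2-\theta)}$, does the paper resort to the lattice count; at that reduced scale the Schmidt bound genuinely yields $\frac{B^{(n+1)(1-\tau)/2+\varepsilon}}{L^{n+1}R^n}$. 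The telescoping is what converts the crude lattice bound at scale $B$ into the sharper one at scale $B^{(1-\tau)/2}$, and this is precisely the idea missing from your proposal.
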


 For the following assume that we are given a value $-1<\tau<1$. Let $1_{B,R}$ be the indicator function for the conditions
	$$ \f{t_j}{t_0} \in R^{-1}[a_j,b_j] \textrm{ for all }2 \leq j\leq n, \quad |t_0| \leq B.$$
 Assume that we are given a point $\bx \in \BR^{n+1}$ with $F(\bx)=0$, and $1_{B,R}(\bx)=1$. Then we find that
$$|t_j|\leq R^{-1} |t_0| \max\{|a_j|, |b_j|\} \leq B R^{-1} \max\{|a_j|, |b_j|\}\leq B , \quad j\geq 2$$
under the assumption that $R^{-1} \max_{2\leq j\leq n} \{|a_j|,|b_j|\} \leq 1$. Moreover, we have
$$F(\bx)=t_0t_1+F_2(t_2,\ldots, t_n)$$
where $\bft= M \bx$. In particular, if $t_0\neq 0$, we obtain that
$$|t_1|\leq |t_0|^{-1} |F_2(t_2,\ldots, t_n)| \leq |t_0| \sup_{ u_j \in R^{-1}[a_j,b_j], j\geq 2} |F_2(u_2,\ldots, u_n)| \leq |t_0|\leq B$$
under the assumption that $\sup_{ u_j \in R^{-1}[a_j,b_j], j\geq 2} |F_2(u_2,\ldots, u_n)| \leq 1$. 
Note that for all $\bx \in \BR^{n+1}$ with $F(\bx)=0$ and $t_0\neq 0$, we have
	$$w^{-}_{B,R}(\bx) \leq 1_{B,R}(\bx) \leq \sum_{k=0}^{\infty} w^{+}_{B\eta^k,R}(\bx),$$
	and so \begin{align*}
	    \CN_{\CW}(w_{B,R}^-;(L,\bGamma))&\leq \CN_{\CW}(1_{B,R};(L,\bGamma))\\&=\CN_{\CW}((R,\bxi),(L,\bGamma);B) \leq \sum_{k=0}^{\infty}\CN_{\CW}(w_{B\eta^k,R}^+;(L,\bGamma)).
	\end{align*}

We let $R_0$ be sufficiently large such that
\begin{equation}\label{sizeR0}
\sup_{u_j\in R_0^{-1}[a_j,b_j], j\geq 2}  |F_2(u_2,\ldots, u_n)| \leq 1,\quad \mbox{ and } \quad R_0^{-1} \max_{2\leq j\leq n} \{|a_j|,|b_j|\} \leq 1,
\end{equation}

and we assume $R\geq R_0$ in the following.

\begin{lemma}\label{le:removingsmoothweights} 
Suppose that $\eta \gg B^{-1/2}$ and $\eta(1+\eta)<\frac{1}{2}$. Then, for $0<\tau<1$, we have 
\begin{multline*}
    \sum_{k=0}^{\infty}\CN_{\CW}(w_{B\eta^k,R}^+;(L,\bGamma))\\ = \CN_{\CW}(w_{B,R}^+;(L,\bGamma))  + O_\tau \left( \eta \mathfrak{S}(\CW;L,\bGamma) \frac{ B^{n-1}}{R^{n-1}} + \eta  \frac{ B^{n-1}}{L^n R^{n-1}}B^\varepsilon E_{\tau}(\eta;B)\right).
\end{multline*}
For $-1<\tau\leq 0$ we have
\begin{multline*}
    \sum_{k=0}^{\infty}\CN_{\CW}(w_{B\eta^k,R}^+;(L,\bGamma))\\ = \CN_{\CW}(w_{B,R}^+;(L,\bGamma))  + O_\tau \left( \frac{ B^{n-1}}{L^n R^{n-1}}B^\varepsilon E_{\tau}(\eta;B) + \frac{B^{\frac{(n+1)(1-\tau)}{2}+\varepsilon}}{L^{n+1}R^n}\right).
\end{multline*}
\end{lemma}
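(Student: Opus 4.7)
The plan is to bound the tail $\sum_{k\ge 1}\CN_{\CW}(w_{B\eta^k,R}^+;(L,\bGamma))$ by applying Theorem~\ref{thm:countingW} term-by-term in the range where \eqref{eq:tau} remains valid, and by lattice-point counting outside of it. Write $B_k:=B\eta^k$. For each $k\ge 1$ with $B_k>LR$, define $\tau_k\in(-1,\tau]$ by $(LR)^2\asymp B_k^{1-\tau_k}$; the assumption $\eta\gg B^{-1/2}$ lets us take $A=1/(1-\tau)$ uniformly so that $\eta^{-1}\ll B_k^{A}$ holds throughout this range, and Theorem~\ref{thm:countingW} applies with $B,\tau$ there replaced by $B_k,\tau_k$. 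The sum is effectively finite, since once $B_k$ drops below an absolute constant the support of $w_{B_k,R}^+$ no longer meets any nonzero integer vector.

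On the range $B_k>LR$, summing the main terms of Theorem~\ref{thm:countingW} geometrically and using $\CI(w_R^+)\ll R^{-(n-1)}$ together with $\eta^{n-1}\le \eta$ (valid for $n\ge 2$ and $\eta(1+\eta)<\frac12$) gives
\[
\sum_{k\ge 1}B_k^{n-1}\CI(w_R^+)\,\mathfrak{S}(\CW;L,\bGamma)\ll \eta\,\frac{B^{n-1}}{R^{n-1}}\,\mathfrak{S}(\CW;L,\bGamma),
\]
which yields the first claimed term when $\tau>0$, and is absorbed into the error-term contribution when $-1<\tau\le 0$ by a direct comparison of $\eta$-exponents (using that $E_\tau(\eta;B)$ is bounded below by a large negative power of $\eta$). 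For the error terms of Theorem~\ref{thm:countingW}, the key manipulation is the identity $B_k^{-\tau_k}=(LR)^2/B_k$ coming from $(LR)^2=B_k^{1-\tau_k}$, which converts $E_{\tau_k}(\eta;B_k)$ into an expression whose $B_k$-dependence factors through a geometric series in $k$. For $n\ge 5$ this reduces $\text{error}_k$ to $\eta^{(9-5n)/2}B_k^{(n+1)/2+\varepsilon}/(L^3R^2)$; summing over $k$ yields an extra $\eta^{(n+1)/2}$, and comparing $\eta$-exponents shows the total is dominated by $\eta^{(11-5n)/2}B^{(n+1)/2+\varepsilon}/(L^3R^2)$, which after re-expressing $B^{-(n-3)\tau/2}$ via $(LR)^{n-3}$ equals $\eta\cdot B^{n-1+\varepsilon}E_\tau(\eta;B)/(L^nR^{n-1})$. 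The case $n=4$ is handled via the analogous identity $B_k^{-(1+\tau_k)/4}=B_k^{-1/2}(LR)^{1/2}$ applied to the second piece of $E_{\tau_k}$ in \eqref{eq:Etau}.

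For $k$ with $B_k\le LR$, where $\tau_k\le -1$ lies outside the range of Theorem~\ref{thm:countingW}, we bound $\CN_{\CW}(w_{B_k,R}^+;(L,\bGamma))$ above simply by the number of integer vectors $\bx\equiv\bGamma\bmod L$ lying in $\operatorname{supp}(w_{B_k,R}^+)$. Passing to $\bt$-coordinates via $\bt=M\bx$, this support is contained in a box of dimensions $\asymp B_k\times B_k/R^2\times(B_k/R)^{n-1}$, where the short direction $B_k/R^2$ reflects the level-lowering $|t_1/t_0|\ll R^{-2}$ derived in \eqref{eq:t1t0}. The lattice $M(L\BZ^{n+1}+\bGamma)$ has all successive minima $\asymp L$ (with constants depending only on $M$), so standard lattice-point counting yields $\ll \max(1,B_k/L)\max(1,B_k/(R^2L))\max(1,B_k/(RL))^{n-1}$. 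Summing over $k$ with $B_k\in[1,LR]$ gives a geometric total of size $O(RB^\varepsilon)$; using $(LR)^2\asymp B^{1-\tau}$ this equals $B^{(n+1)(1-\tau)/2+\varepsilon}/(L^{n+1}R^n)$, matching the extra claimed term in the $-1<\tau\le 0$ case and being harmlessly absorbed in the $\tau>0$ case. The main obstacle is the careful bookkeeping needed to match each geometric sum with the precise form of the claimed error in both the $n\ge 5$ and $n=4$ cases, and to verify the uniformity of the implicit constants from Theorem~\ref{thm:countingW} as $\tau_k$ varies over $(-1,\tau]$.
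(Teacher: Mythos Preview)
Your approach is essentially the same as the paper's: split the tail $\sum_{k\ge 1}$ according to whether Theorem~\ref{thm:countingW} applies, sum the resulting main and error terms geometrically using the identity $(B\eta^k)^{-\tau_k}=B^{-\tau}\eta^{-k}$ (equivalently, your $B_k^{-\tau_k}=(LR)^2/B_k$), and treat the remaining range by direct lattice-point counting in the box dictated by the real zoom condition. Your error-term bookkeeping and lattice computation match the paper's, just expressed through $(LR)$ rather than $B^{-\tau}\eta^{-k}$.

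There is one point you flag but do not actually resolve: uniformity of the implied constant in Theorem~\ref{thm:countingW} as $\tau_k$ ranges over the \emph{open} interval $(-1,\tau]$. Since the constant there is only asserted to depend on $\tau$, it could in principle blow up as $\tau_k\to -1^+$, and your splitting point $B_k=LR$ corresponds exactly to $\tau_k=-1$. The paper handles this by introducing a small parameter $\theta>0$, applying Theorem~\ref{thm:countingW} only on the compact range $\tau_k\in[-1+\theta,\tau]$ (where continuity gives uniformity), and absorbing the slightly larger range $\tau_k\le -1+\theta$ into the lattice-point argument; this amounts to replacing your threshold $B_k\le LR$ by $B_k\le (LR)^{2/(2-\theta)}$ and then sending $\theta\to 0$ at the end. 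With that adjustment your proof goes through.
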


\begin{proof}
We observe that the supports of the weight functions $w^+_{B\eta^k, R}$ overlap, but each $\bx\in \mathbb{R}^{n+1}$ is contained in at most two of them because of the condition $\eta (1+\eta) <\frac{1}{2}$. 
Therefore,  $$\sum_{k:B\eta^k\leqslant 1}\CN_{\CW}(w_{B\eta^k,R}^+;(L,\bGamma))\leqslant 2\#\{\bx\in\BZ^{n+1}:\|\bx\|_{\bxi}\leqslant C_0\}=O(1),$$
for a constant $C_0>0$ which only depends on $F_2$ and the intervals $[a_j,b_j]$, $j\geq 2$.

From now on we focus on $k \geqslant 1$ such that $B\eta^k >1$. Let $$\tau_0:=\tau,$$ and define $\tau_k$ by the equation
\begin{equation}\label{eq:tauk}
B^{1-\tau_0} = (B\eta^k)^{1-\tau_k}.
\end{equation}

Note that by assumption we have $-1<\tau_0<1$ and hence $\tau_k\leq \tau_0$ for all such $k$ in consideration. Let $\theta$ be a small parameter to be chosen later. If we assume moreover that $-1+\theta<\tau_k<1$, then we observe that
$$B\eta^k = B^{\frac{1-\tau_0}{1-\tau_k}} \geq B^{\frac{1-\tau_0}{2}}.$$

The dependence of the implicit constants in Theorem \ref{thm:countingW} is continuous and so in the compact interval $[-1+\theta,\tau_0]$ they depend only on $\theta$ and $\tau_0$. Now Theorem \ref{thm:countingW} implies that
\begin{equation}\label{thm 3.3 negative tau}
\CN_{\CW}(w_{B\eta^k,R}^+;(L,\bGamma)) \ll_{\varepsilon,\tau_0,\theta} \mathfrak{S}(\CW;L,\bGamma) \frac{(B\eta^k)^{n-1}}{R^{n-1}}+\frac{(B\eta^k)^{n-1}}{L^nR^{n-1}}B^\varepsilon E_k(\eta;B),
\end{equation}
for any $k$ such that $-1+\theta\leq\tau_k\leq \tau_0$, where
$$ E_k(\eta;B) := 
\begin{cases}
\eta^{\frac{9-5n}{2}}(B\eta^k)^{\frac{-(n-3)}{2}\tau_k}, &\textrm{ if }n\geq 5;\\
\eta^{-15/2}(B\eta^k)^{-\tau_k} + (B\eta^k)^{-\frac{1+\tau_k}{4}}, &\textrm{ if }n=4.
\end{cases}
$$
It follows from \eqref{eq:tauk} that $(B\eta^k)^{-\tau_k} = B^{-\tau_0}\eta^{-k}$, whence
$$E_k(\eta;B) \ll (\eta^{-\frac{k(n-3)}{2}} +\eta^{-k}) E_0(\eta;B).$$ Therefore,
\begin{align}
&\sum_{\ss{k:B\eta^k>1\\ -1+\theta<\tau_k<1\\ k\geq 1}}\CN_{\CW}(w_{B\eta^k,R}^+;(L,\bGamma)) \nonumber\\ \ll_{\tau_0,\varepsilon,\theta} & \sum_{\ss{k:B\eta^k>1\\ -1+\theta<\tau_k<1\\ k\geq 1}}\left(\mathfrak{S}(\CW;L,\bGamma)  \frac{(B\eta^k)^{n-1}}{R^{n-1}}+ \frac{(B\eta^k)^{n-1}}{L^nR^{n-1}}B^\varepsilon E_k(\eta; B)\right)\nonumber\\
\ll_{\tau_0,\varepsilon,\theta} & \mathfrak{S}(\CW;L,\bGamma) \frac{\eta B^{n-1}}{R^{n-1}}+\frac{ B^{n-1}}{L^n R^{n-1}}B^\varepsilon E_0(\eta; B)\sum_{\ss{k\geq 1}}(\eta^{k(n-2)}+\eta^{k(n/2+1/2)} )\\
\ll_{\tau_0,\varepsilon,\theta} & \eta \mathfrak{S}(\CW;L,\bGamma) \frac{ B^{n-1}}{R^{n-1}} + \eta  \frac{ B^{n-1}}{L^n R^{n-1}}B^\varepsilon E_{\tau}(\eta;B)\label{eq:small values of k}.
\end{align}

Suppose that $k$ is chosen such that $\tau_k\leqslant -1+\theta$. If $\eta^k \geq B^{-1}$, then we have
$$B^{1-\tau_0} = (B\eta^k)^{1-\tau_k} \geqslant (B\eta^k)^{2-\theta} = B^{2-\theta} \eta^{(2-\theta)k},$$
and hence $\eta^k \ll B^{-\frac{\tau_0+1-\theta}{2-\theta}}$.

Therefore, there is a constant $C>0$ such that 
$$\sum_{\ss{k:B\eta^k>1\\ \tau_k \leq -1+\theta}}\CN_{\CW}(w_{B\eta^k,R}^+;(L,\bGamma)) \ll \CN_{\CW}(1_{C B^{\varphi},C^{-1}R};(L,\bGamma)). $$  
Here, $\CN_{\CW}(1_{C B^{\varphi},C^{-1}R};(L,\bGamma))$ is almost the same as our original counting function, but with the height only going up to $C B^{\varphi}$ instead of $B$, where $$\varphi=\varphi(\tau,\theta):=\frac{1-\tau_0}{2-\theta}.$$
In order to bound it, we ignore the condition $F(\bx)=0$, and keep only the congruence condition $\bx \equiv \bGamma \bmod{L}$, and the real zoom condition (together with its consequence for the first Witt coordinate):
\begin{equation}\label{eq:recap of zoom conds}
    \left|\frac{t_j}{t_0}\right| \ll R^{-1} \textrm{ for all } 2\leq j \leq n,\quad \left|\frac{t_1}{t_0}\right|\ll R^{-2}.
\end{equation}
We obtain
\begin{equation}
	\CN_{\CW}(1_{CB^{\varphi},C^{-1}R};(L,\bGamma)) \ll \#\{\bx \in \BZ^{n+1}: \bx \equiv \boldsymbol{\Gamma} \bmod{L}, (\ref{eq:recap of zoom conds}) \textrm{ holds}, \|\bx\|_{\bxi} \leq C B^{\varphi}\}.
	\end{equation}
We proceed similarly to the proof of Lemma \ref{le:countingcmodL}. Let $\Lambda$ be the lattice $\bx \equiv \boldsymbol{0} \mod{L}$, and observe that the lattice $\Lambda$ has determinant $L^{n+1}$ and successive minima satisfying $\lambda_i\gg L$ for all $i$.
Therefore, by standard lattice point counting results (see e.g. \cite[Lemma 2]{Schmidt}), we have, on recalling the assumption $(LR)^2=O(B^{1-\tau})$,
\begin{equation}\label{eq:bdCWBR}
    \CN_{\CW}(1_{C B^{\varphi},C^{-1}R};(L,\bGamma))  \ll 1+ \frac{B^{(n+1)\varphi}}{L^{n+1}R^n} + \sum_{k=1}^n \frac{B^{k\varphi}}{L^kR^{k-1}} \ll \frac{B^{{(n+1)\varphi}}}{L^{n+1}R^n},
\end{equation}
where the last inequality follows from noting that $\varphi>\frac{1-\tau}{2}$ and hence
$$\frac{B^{\phi}}{LR}\gg 1.$$

We now make a case distinction. For $0<\tau<1$ and $n\geq 4$ and $\theta\leqslant \tau$, we have $\varphi\geqslant\frac{1}{2}$ and
$$ \frac{B^{{(n+1)\varphi}}}{L^{n+1}R^n} \ll \frac{B^{\frac{n+1}{2}}}{L^{n+1}R^n} \ll B^{-1/2}\left(\frac{B^{n-1}}{L^nR^{n-1}}\right) \ll \eta \mathfrak{S}(\CW;L,\bGamma) \frac{ B^{n-1}}{R^{n-1}},$$
provided that $\eta \gg B^{-1/2}$. Note that here we used that $F$ is a non-singular quadratic form and that $\gcd(L,\mathbf{\Gamma})=1$. Once combined with (\ref{eq:small values of k}), this completes the proof of the lemma for the case $0<\tau<1$.\par
In the case $-1<\tau\leq 0$ the lemma follows directly from the estimate in equation \eqref{eq:bdCWBR} after taking $\theta$ sufficiently small in terms of $\tau$ and $\varepsilon$.
\end{proof}

Next we use the final lattice point estimates from the proof of Lemma \ref{le:removingsmoothweights} to deduce an upper bound for $\CN_{\CW}((R,\bxi),(L,\bGamma);B)$, which holds for all values of $B,R,L$ under consideration.

\begin{lemma}\label{latticeupperbound}
Let $B, R,L\geq 1$ and $\bGamma\in\CW^o(\BZ/L\BZ)$. Then we have the upper bound

$$\CN_{\CW}((R,\bxi),(L,\bGamma);B) \ll 1 + \frac{B^{n+1}}{L^{n+1}R^n} + \sum_{k=1}^n \frac{B^k}{L^kR^{k-1}}$$
\end{lemma}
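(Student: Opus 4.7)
The plan is to mimic the lattice-point counting argument already appearing at the end of the proof of Lemma \ref{le:removingsmoothweights}, which produces \eqref{eq:bdCWBR}. First I would discard the quadric condition $F(\bx)=0$, retaining only its consequence for the first Witt ratio: every $\bx\in\BZ^{n+1}$ contributing to $\CN_{\CW}((R,\bxi),(L,\bGamma);B)$ must satisfy $\bx \equiv \bGamma \bmod{L}$, $|t_0| \leq B$, $|t_j| \ll B/R$ for $2 \leq j \leq n$ by the real zoom condition \eqref{real zoom condition in t}, and $|t_1| \ll B/R^2 \leq B/R$ (using $R\geq 1$) by \eqref{eq:t1t0}. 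Hence $\CN_{\CW}((R,\bxi),(L,\bGamma);B)$ is bounded above by the number of points of the shifted lattice $\bGamma + L\BZ^{n+1}$ lying inside a box having one side of length $O(B)$ and $n$ sides of length $O(B/R)$ in the $\bft$-coordinates.

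The second step is to apply a standard lattice point counting estimate such as \cite[Lemma 2]{Schmidt}. The lattice $L\BZ^{n+1}$, viewed in $\bft$-coordinates as $LM\BZ^{n+1}$, has determinant $\asymp L^{n+1}$ and successive minima (with respect to the $\bft$-max-norm) all $\gg L$, the implied constants depending only on the fixed matrix $M$. Since the counting region has one long side of length $\asymp B$ and $n$ short sides of length $\asymp B/R$, the maximal $k$-dimensional coordinate subface has volume $\asymp B(B/R)^{k-1}$ for $k \geq 1$, and the Davenport--Schmidt estimate produces
$$\CN_{\CW}((R,\bxi),(L,\bGamma);B) \ll 1 + \sum_{k=1}^{n+1}\frac{B(B/R)^{k-1}}{L^k} = 1 + \frac{B^{n+1}}{L^{n+1}R^n} + \sum_{k=1}^{n}\frac{B^k}{L^k R^{k-1}},$$
which is the claim.

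The main obstacle is essentially bookkeeping: verifying that the anisotropy of the box in $\bft$-coordinates does not spoil the Davenport--Schmidt estimate for the lattice $LM\BZ^{n+1}$. This same issue was already faced in deriving \eqref{eq:bdCWBR} during the proof of Lemma \ref{le:removingsmoothweights}, and the argument there transfers verbatim to our setting. Crucially, unlike in Proposition \ref{propupperbound}, no assumption of the form \eqref{eq:tau} is needed here; the bound depends only on the geometry of the box and the successive minima of $L\BZ^{n+1}$, and therefore holds for all $B, L, R \geq 1$, which is the novel feature of this statement compared to the estimates appearing during the removal of smooth weights.
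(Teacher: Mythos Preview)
Your proposal is correct and follows essentially the same approach as the paper's proof: discard the equation $F(\bx)=0$ but retain its consequence \eqref{eq:t1t0} for the first Witt coordinate, reduce to counting points of the shifted lattice $\bGamma + L\BZ^{n+1}$ in a box with one side of length $\asymp B$ and $n$ sides of length $\asymp B/R$, and apply the Davenport--Schmidt estimate \cite[Lemma 2]{Schmidt} using that the underlying lattice has determinant $\asymp L^{n+1}$ and all successive minima $\gg L$. The paper's proof is slightly more terse but identical in substance, explicitly referring back to the argument for \eqref{eq:bdCWBR} in Lemma \ref{le:removingsmoothweights} just as you do.
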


\begin{proof}
We start with the observation that
$$\CN_{\CW}((R,\bxi),(L,\bGamma);B) \ll \#\{\bx \in \BZ^{n+1}: \bx \equiv \boldsymbol{\Gamma} \bmod{L}, (\ref{eq:recap of zoom conds}) \textrm{ holds}, \|\bx\|_{\bxi} \leq  B\} .$$
Hence it is sufficient to count the number of lattice points in the shifted lattice $\Lambda$ as in the proof of Lemma \ref{le:removingsmoothweights} in the box given by
$$|t_0|\ll B,\quad |t_i|\ll R^{-1}B, \quad 1 \leq i \leq n.$$
Now, we again use a standard lattice point counting result such as in \cite[Lemma 2]{Schmidt}. Note that if $\bfx$ and $\bfx'$ are both counted above, then the difference $\bfx-\bfx'$ is contained in the lattice $\Lambda$.

\end{proof}

\begin{proof}[Completion of proof of Theorem \ref{main counting result} and Proposition \ref{propupperbound}] We start with the case $0<\tau<1$. The constants $\CI(w_R^{\pm})$ obtained from Theorem \ref{thm:countingW} can be replaced with $\CI_R(1+O(\eta))$ by using Lemma \ref{lem:comparing singular integrals}. We conclude from Lemma \ref{le:removingsmoothweights} and Proposition \ref{prop:growthofcirsigma} that for $\eta\gg B^{-1/2}$ we have
	\begin{align*}
	    \CN_{\CW}((R,\bxi),(L,\bGamma);B)&= B^{n-1}\CI_R\mathfrak{S}(\CW;L,\bGamma)(1+O(\eta)) + O_{\varepsilon,\tau} \left(\f{B^{n-1+\varepsilon}}{L^nR^{n-1}}E_{\tau}(\eta;B)\right)\\ &= B^{n-1}\CI_R\mathfrak{S}(\CW;L,\bGamma)\l(1+ O(\eta) + O_{\varepsilon,\tau}(B^\varepsilon E_{\tau}(\eta;B)) \r),
	\end{align*}
 where
	$E_\tau(\eta;B)$ is given by \eqref{eq:Etau}.
	
	The error term above is minimised by choosing $\eta$ such that $\eta = E_{\tau}(\eta;B)$. When $n\geq 5$, this gives $\eta = B^{\f{-(n-3)\tau}{5n-7}}$
 , and when $n=4$, this gives $\eta = B^{-2\tau/17}$. This provides the power-saving error terms claimed in Theorem \ref{main counting result}.\par
 In the case $-1<\tau\leq 0$, by choosing $\eta \sim 1$, we obtain
 $$\CN_{\CW}((R,\bxi),(L,\bGamma);B)\ll_\tau  \f{B^{n-1+\varepsilon}}{L^nR^{n-1}}E_{\tau}(1;B) + \frac{B^{\frac{(n+1)(1-\tau)}{2}+\varepsilon}}{L^{n+1}R^n},$$
 which completes the proof of Proposition \ref{propupperbound}. \end{proof}

	\subsection{Proof of Theorem \ref{thm:countingV} -- From the affine cone to its projective image}\label{se:proofofmainthmprojquadric}
	 Since $\dim V\geqslant 3$, $\operatorname{rank}\operatorname{Pic}(V)=1$, and so $W^o$ is  the unique universal torsor of $V$ (up to isomorphism). 
  
In the following we assume that $R\geq R_0$, where $R_0$ is defined in \eqref{sizeR0}. For every $\bGamma\in\CW^o(\BZ/L\BZ)$, let us define the counting function $ \CN_{\CW^o}((R,\bxi),(L,\bGamma);B)$ for $\CW^o$ with the local conditions $(R,\bxi),(L,\bGamma)$ to be \begin{align*}
	&\sum_{\substack{\bx\in\CW^o(\BZ)\\ \bx\equiv\bGamma\bmod L}}1_{B,R}(\bx)\\&=\#\{\bx\in\BZ^{n+1}\setminus\boldsymbol{0}:F(\bx)=0,\pi(\bx)\in \CU(R,\bxi),\bx\equiv\bGamma\bmod L,\gcd_{0\leqslant i\leqslant n}(x_i)=1,\|\bx\|_{\bxi}\leqslant B\}.
	\end{align*}
	 It is related to the corresponding counting function for $\CW$ via a Möbius inversion:
	 $$\CN_{\CW^o}((R,\bxi),(L,\bGamma);B)=\sum_{d\in\BN}\mu(d)\CN_{\CW}((R,\bxi),(L,\overline{d}\bGamma);B/d).$$
	 where we observe that whenever $d\bx\equiv \bGamma \bmod L$, we have $(d,L)\mid \bGamma$. So the $d$-sum is restricted to $(d,L)=1$, and for every such $d$, we write $\overline{d}$ for the multiplicative inverse modulo $L$. 
	
  	Now for every $p\mid L$, we write $m_p:=\operatorname{ord}_p(L)$ and define \begin{equation}\label{eq:Ep}
  \CE_p(L,\bGamma):=\{y_p\in\CW^o(\BZ_p): y_p\equiv \bGamma\bmod p^{m_p}\}\subseteq \CW^o(\BZ_p),
  	\end{equation} and $$\CE(L,\bGamma):= \prod_{p\mid L}\CE_p(L,\bGamma)\times\prod_{p\nmid L}\CW^o(\BZ_p)\subseteq \prod_{p<\infty}\CW^o(\BZ_p),$$  which is a non-empty finite adelic open set  of $\CW^o$.	
  
  Let us take $\bGamma\in\CW^o(\BZ/L\BZ)$ such that $\pi(\bGamma)\equiv \bLambda\bmod L$. We then have \begin{equation}\label{eq:decompadelic}
		\pi^{-1}(\CD(L,\bLambda))=\bigsqcup_{\gamma\in \BG_{\operatorname{m}}(\BZ/L\BZ)}\CE(L,\gamma \bGamma). \end{equation}
  By \eqref{eq:decompadelic} we can now relate the counting functions of $\CW^o$ and $\CW$ to that of $V$ \eqref{eq:CNV}:
	\begin{align*}
		\CN_{V}((R,\bxi),(L,\bLambda);B)&=\frac{1}{2}\sum_{\gamma\in \BG_{\operatorname{m}}(\BZ/L\BZ)}\CN_{\CW^o}((R,\bxi),(L,\gamma\bGamma);B)\\ &=\frac{1}{2}\sum_{\gamma\in \BG_{\operatorname{m}}(\BZ/L\BZ)}\sum_{\substack{(d,L)=1\\ d\ll B}}\mu(d)\CN_{\CW}((R,\bxi),(L,\gamma\overline{d}\bGamma);B/d).
	\end{align*}
The lemma below allows to extend the $d$-sum to infinity.
\begin{lemma}\label{le:dsum}
We have
    \begin{equation*}
\begin{split}
    \CN_{V}((R,&\bxi),(L,\bLambda);B)\\
    &=B^{n-1}\frac{\CI_R}{2}\sum_{\gamma\in \BG_{\operatorname{m}}(\BZ/L\BZ)}\sum_{(d,L)=1 }\frac{\mu(d)}{d^{n-1}}\mathfrak{S}(\CW;L,\gamma\overline{d}\bGamma)+ O_{\eps}\l(\f{B^{n-1+\eps}}{L^{n-1}R^{n-1}}E_{\tau}(B)\r).
    \end{split}
\end{equation*}
\end{lemma}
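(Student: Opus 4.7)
The starting point is the Möbius inversion identity displayed just before the lemma, which rewrites $\CN_V((R,\bxi),(L,\bLambda);B)$ as
\begin{equation*}
\frac{1}{2}\sum_{\gamma\in \BG_{\operatorname{m}}(\BZ/L\BZ)}\sum_{\substack{(d,L)=1\\ d\ll B}}\mu(d)\,\CN_{\CW}((R,\bxi),(L,\gamma\overline{d}\bGamma);B/d).
\end{equation*}
The plan is to split the $d$-sum at a threshold $D_1 = B^{\kappa}$ for a small $\kappa = \kappa(n,\tau)>0$: for $d \leq D_1$ I apply the asymptotic of Theorem \ref{main counting result}; for $D_1 < d \leq B$ I use the upper bounds from Proposition \ref{propupperbound} and Lemma \ref{latticeupperbound}; and separately I extend the resulting partial main-term sum to a full $d$-sum using the size estimate for the singular series from Proposition \ref{prop:growthofcirsigma}.

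For $d \leq D_1$, provided $\kappa < \tau/2$ so that the hypothesis $(LR)^2 \ll (B/d)^{1-\tau/2}$ is preserved, Theorem \ref{main counting result} yields
\begin{equation*}
\CN_\CW((R,\bxi),(L,\gamma\overline{d}\bGamma);B/d) = \l(\frac{B}{d}\r)^{n-1}\CI_R\,\mathfrak{S}(\CW;L,\gamma\overline{d}\bGamma) + O_{\eps,\tau}\l(\f{(B/d)^{n-1+\eps}}{L^n R^{n-1}}E_\tau(B/d)\r).
\end{equation*}
Summing the main terms with weights $\mu(d)$ factors out $B^{n-1}\CI_R$ after pulling $d^{-(n-1)}$ from $(B/d)^{n-1}$, as in the statement of the lemma; summing the errors over $d \leq D_1$ converges because $n \geq 4$, and multiplying by $\phi(L) \ll L$ from the $\gamma$-sum gives an aggregate error of size $O_{\eps,\tau}(B^{n-1+\eps}/(L^{n-1}R^{n-1}) \cdot E_\tau(B))$, matching the claim.

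To extend the partial main-term sum over $d \leq D_1$ to the full sum over $d$ coprime to $L$, I apply the uniform bound $\mathfrak{S}(\CW;L,\gamma\overline{d}\bGamma)\ll L^{-n}$ from Proposition \ref{prop:growthofcirsigma}. The tail contributes at most $L^{-n}D_1^{-(n-2)}$, which after multiplying by $B^{n-1}\CI_R \ll B^{n-1}R^{-(n-1)}$ and by $\phi(L)$ is admissible as long as $D_1^{n-2}\gg E_\tau(B)^{-1}$. This is automatic for an appropriate choice of $\kappa$, since $E_\tau(B)$ is a fixed negative power of $B$.

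The remaining task is to show that the upper-bound regime $D_1 < d \leq B$ contributes at most the target error. I apply Proposition \ref{propupperbound} with the effective parameter $\tau'(d) := 1 - (1-\tau)\log B/\log(B/d)$ in the range $D_1 < d \leq B^{(1+\tau)/2 - \eps}$, where $\tau'(d)>-1$, and the unconditional lattice estimate of Lemma \ref{latticeupperbound} in the remaining range. The delicate part, and what I expect to be the main obstacle, is the second error term of Proposition \ref{propupperbound}: writing $d = B^\beta$, the identity $(B/d)^{1-\tau'(\beta)} \asymp B^{1-\tau}$ forces $(B/d)^{(n+1)(1-\tau'(\beta))/2} \asymp B^{(n+1)(1-\tau)/2}$ independently of $\beta$, so its sum over the range contributes approximately $B^{(n+2-n\tau)/2+\eps}/(L^{n+1}R^n)$; verifying that this is dominated by the target $B^{n-1+\eps}/(L^{n-1}R^{n-1}) \cdot E_\tau(B)$ requires combining $n \geq 4$, $\tau > 0$, the explicit form of $E_\tau(B)$, and the constraint $(LR)^2 \ll B^{1-\tau}$. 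A similar but simpler analysis handles the lattice contribution from Lemma \ref{latticeupperbound} in the very large $d$ range.
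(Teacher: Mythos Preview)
Your three-range strategy (asymptotic via Theorem~\ref{main counting result}, then Proposition~\ref{propupperbound}, then Lemma~\ref{latticeupperbound}) is exactly the paper's. The thresholds differ slightly---the paper takes $D_1=B^{\tau-\eps}$ rather than $B^\kappa$ for small $\kappa$---but the architecture is identical.

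There is, however, a genuine slip in your treatment of the range $d\le D_1$. You apply Theorem~\ref{main counting result} with the \emph{fixed} parameter $\tau/2$, which produces an error $E_{\tau/2}(B/d)=(B/d)^{-\theta\tau/2}$, not the $E_\tau(B/d)$ you write in the display. Summing then gives an aggregate error of size $E_{\tau/2}(B)=B^{-\theta\tau/2}$, which is a genuine power of $B$ larger than the target $E_\tau(B)=B^{-\theta\tau}$; this loss cannot be hidden in the $B^\eps$. Moreover, you cannot simply push the fixed parameter closer to $\tau$ by shrinking $\kappa$, because your own tail and intermediate-range bounds force $\kappa\gg_{n,\tau}1$ (you need $D_1^{n-2}\gg E_\tau(B)^{-1}$, and similarly for the first error term in Proposition~\ref{propupperbound}), so the gap $\tau-\tau'$ stays bounded away from zero.

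The fix, and the one point of substance you are missing, is to apply Theorem~\ref{main counting result} with the $d$-dependent parameter $\tau_d$ defined by $(LR)^2\asymp (B/d)^{1-\tau_d}$. This gives error $E_{\tau_d}(B/d)$, and the key identity $(B/d)^{-\tau_d}\ll dB^{-\tau}$ (which follows directly from the definition of $\tau_d$) yields $E_{\tau_d}(B/d)\ll d^\theta E_\tau(B)$; since $\theta<1<n-2$, the sum $\sum_d d^{\theta-(n-1)}$ converges and the aggregate error is $O(E_\tau(B))$ as required. The rest of your outline---the second error term of Proposition~\ref{propupperbound} being independent of $d$ and summing to $O(B)$, and the lattice bound for the final range---matches the paper and is correct in spirit, though you should also explicitly handle the first error term of Proposition~\ref{propupperbound} (it is controlled by $E_{\tau_d}(1;B/d)\ll d^{(n-2)/2}$, which sums to $D_1^{1-n/2}$).
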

\begin{proof}
    We split the summation over $d$ into different ranges. For $1 \leq d \ll B$, we define real numbers $\tau_d$ such that 
\begin{equation}\label{def of taud}
    (LR)^2 \asymp \l(\f{B}{d}\r)^{1-\tau_d}.
\end{equation}
Note that we can assume $\tau_d <1$ for all $d$ under consideration. Furthermore, we have $\tau_d>\theta'$ for some fixed small constant $\theta'>0$, provided that $d\ll B^{\tau-\eps}$ for some $\eps>0$, and hence for these values of $d$ we are in a position to apply Theorem \ref{main counting result}. Note that we used again here that the dependence of the implicit constants is continuous in $\tau$ in Theorem \ref{main counting result}. Motivated by this, we define $D_1 = B^{\tau-\eps}$ and we write
$$\Sigma_{\leq D_1}:= \frac{1}{2}\sum_{\gamma\in \BG_{\operatorname{m}}(\BZ/L\BZ)}\sum_{\substack{(d,L)=1\\ d\leq D_1}}\mu(d)\CN_{\CW}((R,\bxi),(L,\gamma\overline{d}\bGamma);B/d).$$
We also let $E_{\tau_d}(\cdot)$ be as defined in (\ref{EB}). With this notation in hand, Theorem \ref{main counting result} implies that 
	\begin{align}
		\Sigma_{\leq D_1}=B^{n-1}\frac{\CI_R}{2}\sum_{\gamma\in \BG_{\operatorname{m}}(\BZ/L\BZ)}\sum_{\substack{(d,L)=1\\ d\leq D_1}}\left(\frac{\mu(d)}{d^{n-1}}\mathfrak{S}(\CW;L,\gamma\overline{d}\bGamma)(1+O_\varepsilon(B^\varepsilon E_{\tau_d}(B/d)))\right)\nonumber\\
  =B^{n-1}\frac{\CI_R}{2}\sum_{\gamma\in \BG_{\operatorname{m}}(\BZ/L\BZ)}\sum_{\substack{(d,L)=1\\ d\leq D_1}}\frac{\mu(d)}{d^{n-1}}\mathfrak{S}(\CW;L,\gamma\overline{d}\bGamma)+O_\varepsilon\left( \frac{B^{n-1+\varepsilon}}{(LR)^{n-1}}  \sum_{d\leq D_1} \frac{E_{\tau_d}(B/d)}{d^{n-1}} \right) \label{eq Moebius up to D1}
	\end{align}
In view of (\ref{def of taud}), the condition $(LR)^2 \ll B^{1-\tau}$ may be alternatively written as   
\begin{equation}\label{eq new form of taud}
    \l(\f{B}{d}\r)^{-\tau_d} \ll dB^{-\tau}. 
\end{equation}
Let $\theta = (n-3)/(5n-7)$ if $n\geq 5$ and $2/17$ if $n=4$, so that $E_{\tau}(B) = B^{-\tau\theta}$. Using (\ref{eq new form of taud}), we obtain
\begin{equation}\label{eq Etaud sum}
    \sum_{d\leq D_1} \f{E_{\tau_d}(B/d)}{d^{n-1}} \ll \sum_{d\leq D_1} \f{(dB^{-\tau})^{\theta}}{d^{n-1}} \ll B^{-\tau \theta} \ll E_{\tau}(B). 
\end{equation}

In order to complete the summation over $d$, we note that
\begin{equation}\label{eq completing d sum}
  \sum_{\gamma\in \BG_{\operatorname{m}}(\BZ/L\BZ)}\sum_{\ss{(d,L)=1 \\ d > D_1}}\frac{\mu(d)}{d^{n-1}}\mathfrak{S}(\CW;L,\gamma\overline{d}\bGamma) \ll \f{B^{n-1}}{(LR)^{n-1}}\sum_{d>B^{\tau - \eps}}d^{1-n} \ll \f{B^{n-1 + \eps}}{(LR)^{n-1}}B^{(2-n)\tau}.\\
\end{equation}
Inserting the estimates (\ref{eq Etaud sum}), (\ref{eq completing d sum}) into (\ref{eq Moebius up to D1}), and observing that $B^{(2-n)\tau} \ll E_{\tau}(B)$, we obtain 
\begin{equation}\label{eq sum over small d}
    \Sigma_{\leq D_1} = B^{n-1}\frac{\CI_R}{2}\sum_{\gamma\in \BG_{\operatorname{m}}(\BZ/L\BZ)}\sum_{(d,L)=1 }\frac{\mu(d)}{d^{n-1}}\mathfrak{S}(\CW;L,\gamma\overline{d}\bGamma) + O_{\varepsilon}\l(\f{B^{n-1 + \varepsilon}}{L^{n-1}R^{n-1}}E_{\tau}(B)\r).
\end{equation}

Next, we let $D_2 = B^{\f{1+\tau}{2} - \eps}$. Then for any $d\leq D_2$, we have $\tau_d >-1+\theta'$ for some positive $\theta'$, and so Proposition \ref{propupperbound} applies. Therefore, the quantity 
$$\Sigma_{D_1< d\leq D_2}:= \frac{1}{2}\sum_{\gamma\in \BG_{\operatorname{m}}(\BZ/L\BZ)}\sum_{\substack{(d,L)=1\\ D_1< d\leq D_2}}\mu(d)\CN_{\CW}((R,\bxi),(L,\gamma\overline{d}\bGamma);B/d)$$
satisfies the estimate
\begin{equation}\label{eq intermediate range of d}
    \Sigma_{D_1< d\leq D_2} \ll \sum_{\gamma\in \BG_{\operatorname{m}}(\BZ/L\BZ)}\sum_{\substack{(d,L)=1\\ D_1< d\leq D_2}} \left(\f{(B/d)^{n-1+\varepsilon}}{L^nR^{n-1}}(E_{\tau_d}(1;B/d) +1)+ \frac{(B/d)^{(n+1)(1-\tau_d)/2+\varepsilon}}{L^{n+1}R^n}\right).
\end{equation}
First, we compute 
\begin{align*}
    &\sum_{\gamma\in \BG_{\operatorname{m}}(\BZ/L\BZ)}\sum_{\substack{(d,L)=1\\ D_1< d\leq D_2}} \frac{(B/d)^{(n+1)(1-\tau_d)/2}}{L^{n+1}R^n}\\ &\ll \sum_{D_1< d\leq D_2} \frac{(B/d)^{(n+1)(1-\tau_d)/2}}{(LR)^n}\ll \sum_{D_1< d\leq D_2}\f{(LR)^{n+1}}{(LR)^n} \ll D_2LR\ll B^{\f{1+\tau}{2} - \eps}B^{\f{1-\tau}{2}} \ll B.
\end{align*}
For the remaining terms in (\ref{eq intermediate range of d}), very crude estimates suffice. We recall that 
\begin{equation*}
    E_{\tau_d}(1;B/d) = \begin{cases}
        (B/d)^{-\f{n-3}{2}\tau_d}, &\textrm{ if }n\geq 5;\\
        (B/d)^{-\tau_d} + (B/d)^{-\f{1+\tau_d}{4}}, &\textrm{ if }n=4.
    \end{cases}
\end{equation*}
Using (\ref{eq new form of taud}), for $\tau_d<0$ and any $n\geq 4$, we have 
\begin{equation}
    E_{\tau_d}(1;B/d) \ll (B/d)^{-\f{n-2}{2}\tau_d} \ll (dB^{-\tau})^{\f{n-2}{2}} \ll d^{\f{n-2}{2}}.
\end{equation}
On the other hand, if $\tau_d>0$, then clearly $E_{\tau_d}(1;B/d) \ll 1$, and so the above estimate holds in any case. Plugging into (\ref{eq intermediate range of d}), we find that 
\begin{align*}
\sum_{\gamma\in \BG_{\operatorname{m}}(\BZ/L\BZ)}\sum_{\substack{(d,L)=1\\ D_1< d\leq D_2}} \f{(B/d)^{n-1+\varepsilon}}{L^nR^{n-1}}(E_{\tau_d}(1;B/d) +1) &\ll \f{B^{n-1}}{(LR)^{n-1}}\sum_{D_1< d\leq D_2}\f{d^{\f{n-2}{2}}}{d^{n-1}} \\
&\ll \f{B^{n-1}}{(LR)^{n-1}}D_1^{1-\f{n}{2}}.
\end{align*}
Since $D_1 = B^{\tau -\eps}$ and $n\geq 4$, it is clear that this term is dominated by the error term already present in (\ref{eq sum over small d}). 

Finally, we consider the range $D_2<d \ll B$ and set
$$\Sigma_{d>D_2}:= \frac{1}{2}\sum_{\gamma\in \BG_{\operatorname{m}}(\BZ/L\BZ)}\sum_{\substack{(d,L)=1\\ d> D_2}}\mu(d)\CN_{\CW}((R,\bxi),(L,\gamma\overline{d}\bGamma);B/d).$$
For this we use Lemma \ref{latticeupperbound} and find that
\begin{align*}
&\Sigma_{d>D_2}\ll L \sum_{d>D_2} \left(  1 + \frac{\left(\frac{B}{d}\right)^{n+1}}{L^{n+1}R^n} + \sum_{k=1}^n \frac{\left(\frac{B}{d}\right)^k}{R^{k-1}L^k}\right)\\
& \ll L \sum_{d>D_2} \left(  1 + \frac{\left(\frac{B}{d}\right)^{n+1}}{L^{n+1}R^n} +  \frac{\left(\frac{B}{d}\right)}{L}\right) \\
&\ll BL + B \log B + \frac{B^{n+1}}{L^nR^n} D_2^{-n}.
\end{align*}
For $n\geq 4$, we have 
\begin{equation*}
    BL + B\log B \ll B^{1+\f{1-\tau}{2}} \ll B^{\l(\f{1 + \tau}{2}\r)(n-1)- 2\tau} \ll \f{B^{n-1-2\tau}}{(LR)^{n-1}}\ll \f{B^{n-1}}{(LR)^{n-1}} E_{\tau}(B),
\end{equation*}
and 
\begin{equation*}
    \f{B^{n+1}}{L^nR^n}D_2^{-n} \ll \f{B^{n-1}}{(LR)^{n-1}}\cdot \f{B^2}{LR}\cdot \l(B^{\f{1+\tau}{2}-\eps}\r)^{-n} \ll \f{B^{n-1 + \eps}}{(LR)^{n-1}}B^{2-\f{n}{2}} \ll \f{B^{n-1 + \eps}}{(LR)^{n-1}} E_{\tau}(B).
\end{equation*}
Combining the estimates that we have obtained for three different ranges for $d$, we conclude the proof of Lemma \ref{le:dsum}.
\end{proof}

\begin{proof}[Completion of proof of Theorem \ref{thm:countingV}]
   In view of \eqref{eq:cir2} and the estimates in Proposition \ref{prop:growthofcirsigma}, it remains to analyse arithmetic part of the main term in Lemma \ref{le:dsum} and to show that it  equals $\omega^V_{f}\left(\CD(L,\bLambda)\right)$, the finite Tamagawa measure of finite adelic neighbourhood $\CD(L,\bLambda)$.
   As in the proof of Proposition \ref{prop:growthofcirsigma}, we fix $L_0\in\BN$ such that both $\CV$ and $\CW^o$ are regular modulo $L_0$, and upon decomposing the neighbourhood $\CD(L,\bLambda)$ into a disjoint union whose level of congruence are all divisible by $L_0$, we may assume that $L_0\mid L$. 

We have
	$$\mathfrak{S}(\CW;L,\gamma\overline{d}\bGamma)=\prod_{p\mid L}\sigma_{p}(\CW;L,\gamma\overline{d}\bGamma)\prod_{p\nmid L}\sigma_p(\CW).$$ For every $p\mid L$ (recalling \eqref{eq:Ep}, \begin{align*}
		\sigma_{p}(\CW;L,\gamma\overline{d}\bGamma)=\lim_{k\to\infty}\frac{\#\left(\CW^o(\BZ/p^k\BZ)\cap \CE_p(L,\gamma\overline{d}\bGamma)\right)}{p^{nk}}=\frac{1}{p^{nm_p}}.
	\end{align*}  In particular its value is independent of $\gamma$ and $d$.
	Therefore we obtain
	\begin{equation*}\label{eq:arithfactor}
		\begin{split}
		&\sum_{\gamma\in \BG_{\operatorname{m}}(\BZ/L\BZ)}\sum_{(d,L)=1}\frac{\mu(d)}{d^{n-1}}\mathfrak{S}(\CW;L,\gamma\overline{d}\bGamma)\\ =& \prod_{p\mid L}
		\left(1-\frac{1}{p}\right)\frac{1}{p^{(n-1)m_p}}\times\prod_{p\nmid L}\left(1-\frac{1}{p^{n-1}}\right)\sigma_p(\CW)\\ =&\prod_{p\mid L}\left(1-\frac{1}{p}\right)\sigma_{p}(\CV;L,\boldsymbol{\Lambda})\times\prod_{p\nmid L}\left(1-\frac{1}{p}\right)\sigma_p(V) =\omega^V_{f}\left(\CD(L,\bLambda)\right),
				\end{split}
	\end{equation*}
	on using \eqref{eq:singserV} \eqref{eq:sigmapV1} \eqref{eq:sigmapV2}.
 This finishes the proof. \end{proof}

	\appendix
	
	\section{Rational approximations on quadrics}\label{se:ratapprox}
	The goal of this appendix is to  first recall the ``$\alpha$-constant''
 introduced in the recent work \cite{M-R} of McKinnon-M. Roth, which pioneers the study of Diophantine approximation on algebraic varieties. (For later developments see also  \cite{Huang}.)
This is a generalisation of the classical notion of  ``irrationality measure'' originally attached to real numbers. We then compute its value achieved on generic open subsets on projective quadrics, so as to deduce the optimality of Theorem \ref{thm:countingV}.
	
	\subsection{Approximation constants} To ease the exposition, let us place ourselves in the following setting. Let $X\hookrightarrow \BP^n$ be an irreducible projective algebraic variety defined over $\BQ$. Let $L$ be an ample line bundle, to which we associate a height function $H_L$. 
	We choose a projective distance function $d_\nu$ on $\BP^n$ for a place $\nu$ of $\BQ$ as in \cite[\S2]{M-R}. Let $Q_0\in X(\BQ_\nu)$ be a fixed $\nu$-adic point.
	\begin{definition}\label{def:appconst}
		For any subvariety $Y\subset X$, we define the (\textit{best}) \textit{approximation constant} $\alpha^\nu(Q_0,Y)$ (depending also on $L$ and $\nu$)  to be the \textbf{infimum} of $\gamma>0$ such that there exist infinitely many points $P\in Y(\BQ)$ such that
  $$  d_\nu(P,Q_0)^\gamma H_L(P)<1.$$

	\end{definition}
	It can be shown that (see \cite[\S2]{M-R} for details) $\nu$-adically around $Q_0$ all distances are equivalent, hence the $\alpha$ constant does not depend on the choice of $d_\nu$, nor on the choice of $H_L$, nor is the bound $1$ of importance.
 The constant $\alpha^\nu(Q_0,Y)$ measures how quickly the height must grow for a sequence of rational points of $Y$ that tends to $Q_0$. The smaller the $\alpha$-constant is, the slower the height is allowed to grow, which means $Q_0$ is better approximable. In dimension one the approximation constant for algebraic points is known, as a corollary of the classical K. Roth theorem (\cite[Theorem 2.16]{M-R}) for $\BP^1$.
	
	We can furthermore ask for approximating the point $Q_0$ in a ``generic direction''. This motivates the definition of another constant as follows. 
	It was first formulated by Pagelot (unpublished) and further developed in the first author's Ph.D. thesis \cite{Huang}.
	\begin{definition}\label{def:essconst}
		With the notation above, we define the \textit{essential constant} of $Q_0$ 
		to be
		$$\aess^\nu(Q_0)=\sup_{\substack{Y\subset X}} \alpha^\nu(Q_0,Y),$$
		where $Y$ ranges over all open dense subvarieties of $X$. 
	\end{definition}
	
\subsection{The essential constants for quadrics}
	We go back to our initial setting. 	Let $F\in \BZ[x_0,\ldots,x_n]$ be an integral non-degenerate quadratic form. 	Let $V\subset\BP^n$ be the projective quadric defined by $F=0$. We always assume that $V$ is $\BQ$-isotropic, that is $V(\BQ)\neq\varnothing$. We shall use the naive height $H:\BP^n(\BQ)\to\BR_{>0}$ corresponding to $\CO(1)$ and the global sections given by the coordinates $x_0,\ldots, x_n$.

Theorem \ref{thm:countingV} has the following consequence on bounding the essential $\alpha$-constant from above.
	\begin{theorem}\label{thm:aess}
	Assume that $n\geqslant 4$ (or equivalently $\dim V\geqslant 3$). Then for every place $\nu$ of $\BQ$, and for every $Q_0\in V(\BQ_\nu)$, we have
	$$\aess^{\nu}(Q_0)\leqslant 2.$$
\end{theorem}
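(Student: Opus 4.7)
The plan is to show that for every open dense $Y \subseteq V$ with complement $Z := V \setminus Y$ and every $\gamma > 2$ there are infinitely many $P \in Y(\BQ)$ satisfying $d_\nu(P, Q_0)^\gamma H(P) < 1$; this gives $\alpha^\nu(Q_0, Y) \leq 2$ for every $Y$, hence $\aess^\nu(Q_0) \leq 2$. The main input will be Theorem \ref{thm:countingV} applied to a carefully designed adelic neighbourhood $\CU(R,\bxi) \times \CD(L,\bLambda)$: the $\nu$-adic factor will be chosen so as to produce approximants to $Q_0$, while an auxiliary (or the same) $p$-adic zoom will force every point counted to avoid $Z$.

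In the archimedean case $\nu = \infty$, I would take $\bxi := Q_0$, $R := B^{1/2-\epsilon}$ for a small $\epsilon > 0$, and pick any prime $p$ for which $\CZ \bmod p$ is a proper closed subscheme of $\CV \bmod p$ (all but finitely many primes have this property), together with $\bLambda_p \in \CV(\BZ_p)$ reducing to a point of $\CV(\BF_p) \setminus \CZ(\BF_p)$; then set $L := p$. Condition \eqref{eq:tau} is satisfied with $\tau \to 2\epsilon$, and Theorem \ref{thm:countingV} combined with Proposition \ref{prop:growthofcirsigma} yields $\CN_V \asymp B^{(n-1)(1/2 + \epsilon)} \to \infty$ (with implicit constant depending on $p$). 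Each counted $P$ obeys $d_\infty(P, Q_0) \ll R^{-1}$, whence $d_\infty(P, Q_0)^\gamma H(P) \ll B^{1 - \gamma/2 + \gamma\epsilon} < 1$ for $\epsilon$ small, while the congruence $P \equiv \bLambda_p \bmod p$ forces the primitive integer representative of $P$ to reduce off $\CZ(\BF_p)$, so that $P \in Y(\BQ)$.

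For $\nu = p$ non-archimedean, I would fix any $\bxi \in V(\BR)$, set $R := R_0$, and use only the $p$-adic zoom with $L = p^m$ and $m$ chosen essentially as $\lceil (\log_p B)/\gamma \rceil$, so that $p^m$ stays below $B^{1/2-\epsilon}$ and approximation holds. When $Q_0 \notin Z(\BQ_p)$, the natural choice $\bLambda_p \equiv Q_0 \bmod p^m$ suffices, since for $m$ exceeding $-\log_p d_p(Q_0, Z(\BQ_p))$ the whole ball $\{x \equiv \bLambda_p \bmod p^m\}$ lies off $Z$ by ultrametricity. When $Q_0 \in Z(\BQ_p)$, I work in local $p$-adic coordinates in which $Z$ is cut out by a single analytic equation and pick $\bLambda_p$ in a direction transverse to $Z$ at distance $p^{-(m-1)}$ from $Q_0$; the ultrametric inequality then forces $d_p(P, Q_0) \leq p^{-(m-1)}$ and $d_p(P, Z) = p^{-(m-1)} > 0$ for every $P \equiv \bLambda_p \bmod p^m$, so again $P \in Y(\BQ)$. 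In either sub-case Theorem \ref{thm:countingV} and Proposition \ref{prop:growthofcirsigma} give $\CN_V \asymp B^{(n-1)(1-1/\gamma)} \to \infty$, which delivers infinitely many approximants in $Y(\BQ)$.

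The main obstacle is the second sub-case above, where $Q_0$ itself lies on $Z$: a naive choice of $\bLambda_p$ in a $p$-adic ball centred at $Q_0$ is useless because any such ball of radius $p^{-m}$ lies entirely inside the $p^{-m}$-tube around $Z$. The remedy, sketched above, is to shift $\bLambda_p$ off $Z$ along a transverse direction at a carefully calibrated scale, using crucially that $\dim Z \leq n-2$ since $V$ is smooth of dimension at least $3$. All the remaining steps are routine from Theorem \ref{thm:countingV}, Proposition \ref{prop:growthofcirsigma} and the ultrametric properties of $p$-adic distances on $V$, and combine to yield $\aess^\nu(Q_0) \leq 2$.
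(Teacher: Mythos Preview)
Your overall strategy is sound and leads to a correct proof, but it differs from the paper's in one key respect, and your non-archimedean sub-case needs more care.

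\textbf{How the paper proceeds.} The paper avoids $Z$ at the \emph{real} place in both cases: it inserts an extra compactly supported weight $w_4$ with support in a sub-box $U_1\subseteq U_0$ chosen so that $g_{\bxi}^{-1}(R^{-1}U_1)\cap Z(\BR)=\varnothing$, and then applies the weighted asymptotic (Theorem~\ref{thm:countingW}) directly. For $\nu=\BR$ one takes $\bxi=Q_0$, $L=1$, $R=B^{(1-\tau)/2}$; for $\nu$ non-archimedean one fixes any $\bxi\in V(\BR)$, takes $R$ bounded and lets $L=p^{m}$ grow. In both cases the extra real weight $w_4$ alone forces every counted point off $Z(\BR)$, hence off $Z(\BQ)$. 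No auxiliary prime and no shifting of $\bLambda_p$ is used.

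\textbf{Where your argument is thin.} Your archimedean case (auxiliary prime $q$ with $\bLambda_q\bmod q\notin\CZ(\BF_q)$) is correct and pleasantly clean. The delicate point is the non-archimedean sub-case $Q_0\in Z(\BQ_p)$. Your claim ``$d_p(P,Z)=p^{-(m-1)}$'' presupposes that a hypersurface containing $Z$ is smooth at $Q_0$; if the local equation $f=f_d+f_{d+1}+\cdots$ has $d\geqslant 2$ then for $\bLambda_p=Q_0+p^{m-1}\bv$ one only has $|f(\bLambda_p)|_p\asymp p^{-(m-1)d}$, and you must check that the Taylor remainder over the ball of radius $p^{-m}$ is strictly smaller. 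In fact it is --- writing $P=p^{m-1}\bu$ with $\bu\equiv\bv\bmod p$ reduces to $f_d(\bu)\not\equiv 0\bmod p$, which holds for a suitable $\bv$ when $f_d\bmod p\neq 0$ --- but this is a genuine extra argument your sketch does not supply, and for small $p$ (where $f_d\bmod p$ may vanish identically) further care is needed. A far simpler repair is to reuse your own archimedean idea: keep $\bLambda_p\equiv Q_0\bmod p^{m}$ for the approximation and impose the $Z$-avoiding congruence at an auxiliary prime $q\neq p$, taking $L=qp^{m}$. Then Theorem~\ref{thm:countingV} applies verbatim and no local analysis of $Z$ at $Q_0$ is required; this is the $p$-adic mirror of what the paper does with the real sub-box $U_1$.
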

\begin{proof}
Fix some proper closed subvariety $Z \subseteq V$. 
Recall the real neighborhood $U_0 \subseteq \mathbb{R}^{n-1}$ and the neighborhood $\CU(R,Q_0)$ from \eqref{real zoom condition in x}, which is an open real neighbourhood of side-length $R^{-1}$ around $Q_0$. Pick a box $U_1 \subseteq U_0$ such that $g_{Q_0}^{-1}(R^{-1}U_1)$ is disjoint from $Z(\mathbb{R})$. Choose a smooth compactly supported weight function $w_4:\mathbb{R}^{n-1}\ra \mathbb{R}_{\geq 0}$ with support contained in $U_1$. Define similarly to equation (\ref{choice of wbr}) a weight function
\begin{equation}\label{choice of wbru_1}
	w_{B,R,U_1}^{\pm}(\bx) := w_1^{\pm}\left(R\frac{t_j}{t_0}, j \geq 2\right)w_2\left(\frac{R^2}{B^2}F(\bx)\right)w_3^{\pm}\left(\frac{t_0(\bx)}{B}\right)w_4\left(R\frac{t_j}{t_0}, j\geq 2\right),
\end{equation}
where we added an additional factor of $w_4$. Note that Theorem \ref{thm:countingW} still holds with the additional weight function $w_4$. For $B$ sufficiently large and for $0<\tau<1$, we find that
$$\CN_{\CW}(w_{B,R,U_1}^{\pm};(L,\bGamma)) \gg \frac{B^{n-1}}{L^nR^{n-1}},$$ whenever $LR=O(B^{\frac{1-\tau}{2}})$.
In particular, this quantity is positive, so we can find a point $P=P(B;R,L)$ which is counted by $\CN_{\CW}(w_{B,R,U_1}^{\pm};(L,\bGamma))$, and in particular $P\not\in Z(\BQ)$.  

Case $\nu=\BR$. We take $L=1$ and we omit $(L,\bLambda)$.
For any fixed $\tau \in (0,1)$, define $R = B^{\frac{1-\tau}{2}}$. 
Using the coordinate system $\bt$ as given in  Lemma \ref{change of variables}, the real zoom condition determined by the weight functions $w_1^\pm,w_4$ implies that  
$$d_{\BR}(P,Q_0)\ll \frac{1}{R}$$ for any $P$ as above and for any fixed choice of real distance functions around $Q_0$. Hence
$$d_{\BR}(P,Q_0)^{\frac{2}{1-\tau}}H(P)\ll \frac{B}{R^{\frac{2}{1-\tau}}}=1,$$ from which we infer $$\alpha^{\BR}(Q_0,V\setminus Z)\leqslant \frac{2}{1-\tau}.$$
With this we find
$$\aess^{\BR}(Q_0)\leqslant \inf_{0<\tau<1}2(1-\tau)^{-1}=2.$$

Case $\nu$ ultrametric.
We similarly take $R=1$ and $L=p^{m_p}$ where $p$ corresponds to $\nu$. Now the $p$-adic condition determined by the pair $(p^{m_p},Q_0)$ implies that $$d_{\nu}(P,Q_0)\ll p^{-m_p},$$ for any $P$ as above and for any fixed $p$-adic distance functions around $Q_0$. The arguments now follow similarly as in the archimedean case. \end{proof}

We now give the exact value of the essential constant of any rational point on $V$ via elementary arguments. This shows the optimality of Theorem \ref{thm:aess} (and hence Theorem \ref{thm:countingV}).
\begin{proposition}\label{prop:aessgeq}
	Assume that $n\geqslant 2$. Then for any place $\nu$ and for any $Q_0\in V(\BQ)$, we have $$\aess^{\nu}(Q_0)=2.$$
\end{proposition}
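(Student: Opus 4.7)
\medskip

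The plan is to prove both inequalities $\aess^\nu(Q_0) \leqslant 2$ and $\aess^\nu(Q_0) \geqslant 2$ by exhibiting, respectively, an explicit rational parametrisation of $V$ near $Q_0$ and a Liouville-type obstruction. Note that we must also establish the upper bound here (as distinct from Theorem \ref{thm:aess}, whose hypothesis $n \geqslant 4$ does not cover the cases $n=2,3$). After applying Lemma \ref{change of variables} over $\BQ$ (all the linear-algebraic choices in its proof can be made rationally since $Q_0 \in V(\BQ)$), we may assume $Q_0 = [1:0:\cdots:0]$ and $F(\bx) = x_0 x_1 + F_2(x_2,\ldots,x_n)$ with $F_2 \in \BQ[x_2,\ldots,x_n]$ non-degenerate. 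In the chart $x_0 \neq 0$, $V$ is the graph $x_1 = -F_2(x_2,\ldots,x_n)$, yielding the birational parametrisation $(x_2,\ldots,x_n) \mapsto [1:-F_2(\bx'):x_2:\cdots:x_n]$. For a rational point with $(x_2,\ldots,x_n) = (a_2/c,\ldots,a_n/c)$, $\gcd(a_2,\ldots,a_n,c)=1$, $c\geqslant 1$, the corresponding point on $V$ is $P = [c^2 : -F_2(\bfa) : a_2 c : \cdots : a_n c]/g$ with $g = \gcd(c^2, F_2(\bfa), a_jc)$; elementary coprimality forces $g \mid c$ and $g \mid F_2(\bfa)$, so $g \leqslant \min(c,\, C\max_j|a_j|^2)$ for a constant $C$ depending only on $F_2$.

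For $\aess^\nu(Q_0)\leqslant 2$, fix any proper closed $Z \subsetneq V$ and $\gamma > 2$. I would select $\bfb = (b_2,\ldots,b_n) \in \BZ^{n-1}$ with $\gcd = 1$ such that the plane conic $C_{\bfb} \subset V$ obtained by intersecting $V$ with the $2$-plane spanned by $Q_0$, $[0:1:0:\cdots:0]$ and $[0:0:b_2:\cdots:b_n]$ is not contained in $Z$; such $\bfb$ exists because the curves $C_{\bfb}$ sweep out $V$. Then I would construct the approximating family: in the archimedean case take $P_c = [c^2:-F_2(\bfb):b_2c:\cdots:b_nc]$ for $c$ ranging over integers coprime to $F_2(\bfb)$, so $g=1$, $H(P_c) \asymp c^2$ and $d_\infty(P_c,Q_0) \asymp 1/c$; in the $p$-adic case instead take $P_m = [1:-p^{2m}F_2(\bfb):p^m b_2:\cdots:p^m b_n]$ with $H(P_m)\asymp p^{2m}$ and $d_p(P_m,Q_0) = p^{-m}$. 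In either case $d_\nu^{\gamma}H \to 0$, and all but finitely many of the $P$'s lie outside $Z$ since $C_\bfb \cap Z$ is finite.

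For $\aess^\nu(Q_0) \geqslant 2$, since $V$ is itself open dense in $V$ it suffices to show $\alpha^\nu(Q_0,V) \geqslant 2$. Let $\gamma < 2$ and consider any $P \in V(\BQ)$ sufficiently close to $Q_0$, forcing $|p_0|$ to dominate. Writing $P$ via the parametrisation as above, one has $H(P) = c^2/g$ and $d_\nu(P,Q_0) \asymp \max_j|a_j|_\nu/|c|_\nu$ (in the archimedean case one uses $\max_j|a_j|/c$ directly; in the $p$-adic case one argues analogously on valuations). The key is the split analysis on the size of $\max|a_j|$: if $\max|a_j|^2 \leqslant c/C$ then $g \leqslant C\max|a_j|^2$ gives
\[
d_\nu^\gamma H(P) \gg (\max|a_j|)^{\gamma-2}\, c^{2-\gamma} \gg c^{(2-\gamma)/2},
\]
whereas if $\max|a_j|^2 > c/C$ then $g\leqslant c$ yields $H(P)\geqslant c$ and $d \gg c^{-1/2}$, giving $d_\nu^\gamma H(P) \gg c^{1-\gamma/2}$. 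In both ranges $d_\nu^\gamma H(P) \to \infty$ as $c \to \infty$, leaving only finitely many $P \in V(\BQ)$ with $d_\nu^\gamma H(P) < 1$.

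The main technical subtlety lies in the lower bound: naively one has only $H(P) \geqslant c$ and $d\geqslant 1/c$, giving the insufficient bound $c^{1-\gamma}$ that fails for $\gamma \in [1,2)$. The crucial additional ingredient is the divisibility $g \mid F_2(\bfa)$, which combined with $g \mid c$ gives the refined bound $g \leqslant \min(c, C\max|a_j|^2)$ and forces the correct scaling in both the ``small $|a_j|$'' and ``large $|a_j|$'' regimes. This should also extend to non-archimedean $\nu$, where $\bfa$ plays the role of a direction with suitably large $\nu$-adic valuation and the same two-range analysis on the Newton quantities $v_\nu(c), v_\nu(a_j), v_\nu(F_2(\bfa))$ applies.
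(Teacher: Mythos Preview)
Your upper bound via explicit conics through $Q_0$ is correct and is essentially the construction the paper uses (projection from $Q_0$ onto a hyperplane produces the same family of conics).

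There is, however, a genuine gap in your lower bound. You assert that it suffices to prove $\alpha^\nu(Q_0,V)\geqslant 2$ and then argue via the parametrisation $P=[c^2:-F_2(\bfa):a_jc]/g$. Your key estimate $g\leqslant C\max_j|a_j|^2$ is derived from $g\mid F_2(\bfa)$, but this implication is vacuous when $F_2(\bfa)=0$: in that case $g=c$ exactly, and your Case~1 bound collapses. Concretely, if $F_2$ is isotropic over $\BQ$ (which always happens for $n\geqslant 6$ by Meyer, and can happen already for $n=3$), pick $\bfb$ primitive with $F_2(\bfb)=0$; then the points $[c:0:b_2:\cdots:b_n]$ (archimedean) or $[1:0:p^m b_2:\cdots:p^m b_n]$ ($p$-adic) lie on $V$, have height $\asymp c$ (resp.\ $p^m$) and distance $\asymp c^{-1}$ (resp.\ $p^{-m}$), so that $d_\nu^\gamma H\to 0$ for every $\gamma>1$. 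Thus $\alpha^\nu(Q_0,V)\leqslant 1$ in these cases, and the statement you set out to prove is simply false.

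The repair is minimal but essential: since $\aess^\nu(Q_0)=\sup_Y\alpha^\nu(Q_0,Y)$, it suffices to exhibit \emph{one} open dense $U$ with $\alpha^\nu(Q_0,U)\geqslant 2$. The paper takes $U=(V\cap\{x_0\neq 0\})\setminus V_1$ with $V_1=V\cap\{x_1=0\}$; on this set $x_1$ is a nonzero integer, so $|x_0|\leqslant|x_0x_1|=|F_2(x_2,\ldots,x_n)|\ll\max_{i\geqslant 2}|x_i|^2$, and the repulsion inequality $d_\nu(P,Q_0)^2H(P)\gg 1$ follows in one line for both the real and $p$-adic places. This is cleaner than your two-range split (which would become valid once you impose $F_2(\bfa)\neq 0$), and it also handles the non-archimedean case explicitly rather than by the vague analogy you offer.
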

\begin{proof}
We shall prove that \begin{equation}\label{eq:loweralpha}
		\alpha^\nu(Q_0,U)\geqslant 2,
	\end{equation} for an appropriately chosen open subset $U\subset V$, which directly implies $\aess^{\nu}(Q_0)\geqslant 2$. In fact \eqref{eq:loweralpha} is a consequence of the following repulsion inequality  \begin{equation}\label{eq:repulsion}
	    \operatorname{d}_\nu(P,Q_0)^2 H(P)\geqslant A_{Q_0},
	\end{equation} valid for any $P\in U(\BQ)$ different from $Q_0$, where $A_{Q_0}>0$ depending only on $Q_0$. We are going to establish \eqref{eq:repulsion}.
 
 Upon a $\BQ$-linear transformation as in Lemma \ref{change of variables}, which clearly does not affect the value of $\alpha$ (see \cite[Proposition 2.4]{M-R}), we may assume that $Q_0=[1:0:\cdots:0]$ and $$F(x_0,\ldots,x_n)=x_0x_1+F_2(x_2,\ldots,x_n).$$ 
We take $\left(\frac{x_1}{x_0},\ldots,\frac{x_{n}}{x_0}\right)$ as a local chart for the open set $W:=(x_0\neq 0)$. 
We may assume that $\max_{1\leq i \leq n}|x_i/x_0| \ll 1$, else (\ref{eq:repulsion}) is trivial. By \cite[Proposition 2.4]{M-R} projective distance functions coming from different embeddings are equivalent.  So we may use the following projective distance function 
 $$\operatorname{d}_{\nu}(P,Q_0)=\max_{1\leqslant i\leqslant n}\left|\frac{x_i}{x_0}\right|_\nu,$$ whenever $P=[x_0:\cdots:x_n]\in W(\BQ)$. (In what follows, homogeneous coordinates of $P$ are all integers with $\gcd(x_0,\ldots, x_n)=1$.)
    We define the closed subset $V_1:=V\cap (x_1=0)$, and the open subset \begin{equation}\label{eq:U}
    U:=(W\cap V)\setminus V_1.
\end{equation}

We first consider the case where $\nu$ is real.
Since for every $P=[x_0:\cdots:x_n]\in U(\BQ)$, 
	$$|x_0|\leqslant |x_0x_1|=|F_2(x_2,\ldots,x_n)|\ll \max_{2\leqslant i\leqslant n}|x_i|^2,$$ we then have
	$$\operatorname{d}_{\BR}(P,Q_0)^2 H(P)\geqslant\left(\frac{\max_{2\leqslant i\leqslant n}|x_i|}{|x_{0}|}\right)^{2}|x_{0}|\gg \frac{|x_0|}{|x_0|^2}|x_0|\geqslant 1.$$
	
We next assume that $\nu$ corresponds to a prime $p$. 
For every $P=[x_0:\cdots:x_n]\in U(\BQ)\setminus\{Q_0\}$ with coprime integers $x_0,\ldots, x_n$, the equation $x_0x_1=-F_2(x_2,\ldots,x_n)$ implies that
$$\operatorname{ord}_p(x_1)+\operatorname{ord}_p(x_0)\geqslant 2\min_{2\leqslant i\leqslant n}\operatorname{ord}_p(x_i),$$ whence 
\begin{equation}\label{max12}
|x_0x_1|_\nu\leqslant \max_{2\leqslant i\leqslant n}|x_i|^2_\nu.
\end{equation}
Fix $i_0\geqslant 2$ (depending on $P$) such that $|x_{i_0}|^2_\nu$ attains the maximum above. We necessarily have $x_{i_0}\neq 0$ since otherwise  we would have $x_2=\cdots=x_n=0$ and this would imply $x_1=0$, contradicting to $P\not\in V_1(\BQ)$. 

Note that by equation (\ref{max12}) we find that $|x_1|_\nu^2\leq \max_{2\leq i\leq n}|x_i|_\nu^2$. We conclude that. $$\operatorname{d}_\nu(P,Q_0)^2 H(P)\geqslant \frac{|x_{i_0}|_\nu^2}{|x_0|_\nu^2}|x_1|\geqslant \frac{\max_{2\leqslant i\leqslant n}|x_i|^2_\nu}{|x_0x_1|_\nu}\geqslant 1.$$

In summary, we have proven that for every $P\in U(\BQ)\setminus\{Q_0\}$ and for every $\nu$, $$\operatorname{d}_\nu(P,Q_0)^2 H(P)\gg 1.$$ This confirms the claim \eqref{eq:repulsion}.

	Finally we prove the reverse inequality \begin{equation}\label{eq:upperaess}
	 	\aess^{\nu}(Q_0)\leqslant 2,
	 \end{equation} for arbitrary $\nu\in\operatorname{Val}(\BQ)$ and $Q_0\in V(\BQ)$. We shall exhibit a family of $\BQ$-curves through $Q_0$ which are  \emph{very free}, meaning that their union sweeping out a dense subset of the quadric. The idea essentially follows from the fact that quadrics containing at least one rational point are rational. We fix a rational hyperplane $H\subset \BP^n$ not containing $Q_0$. Then any generic line through $Q_0$  intersects $H$ at a unique point. This defines a $2$-to-$1$ projection map $\operatorname{pr}$ from an open subset of $V$ to $H$. Now the Zariski closure of $\operatorname{pr}^{-1}(l)$, the preimage of any generic line $l\subset H$ via $\operatorname{pr}$ in $V$, is a conic through $Q_0$. Moving the line $l\subset H$ gives a family of  smooth conics through $Q_0$, each one having $\alpha$-constant equal to $2$. This confirms the claim \eqref{eq:upperaess}.
\end{proof}

\begin{remarks}\label{rmk:dioapp}\hfill
\begin{enumerate}
    \item We can show that $\alpha^\nu(Q_0,V_1)=1$, whenever $Q_0\in V(\BQ)$ and $V_1(\BQ)\setminus\{Q_0\}\neq\varnothing$ (or equivalently $F_2$ is isotropic), where $V_1$ is defined as in the proof of Proposition \ref{prop:aessgeq}. So $V_1$ is ``locally accumulating'' in the sense of \cite{Huang}, which morally contains rational points that are ``better approximable'' to $Q_0$.
		
  \item Rational points are usually ``badly approximable'' compared to irrational ones, i.e., they have larger $\alpha$-constants. Recent work of de Saxcé \cite{Saxce} deals with rational approximations of a fixed real algebraic point on a projective quadric based on a version of Schmidt's subspace theorem, and it is proved that such a point has smaller  $\alpha$-constant. 
	
\item Although not directly related to the results of this article, it is worth pointing out that studying the distribution of rational points inside of a shrinking neighbourhood of size corresponding exactly to the essential constant, say $R\asymp B^{\frac{1}{2}}$, is an interesting (but challenging) problem. The limit distribution is in general quite different from the one given by the Tamagawa measures.
See \cite{Huang} and the references therein for more details on this subject.\footnote{Two examples of quadratic surfaces (split or not over $\BQ$) were addressed in Pagelot's thesis (unpublished). We are able to determine the shape of limit measures for higher dimensional quadratic hypersurfaces (unpublished).} 
\end{enumerate}
\end{remarks}

	\section*{Acknowledgements}
	We thank Miriam Kaesberg for her effort in an earlier stage of this project. We are grateful to Tim Browning for his interest. The hospitality of IST Austria, Universität Göttingen, and Bayes Centre at ICMS contributes substantially to the development of this paper. We thank these institutes heartily. The third author was supported by the University of Bristol and the Heilbronn Institute for Mathematical Research.

\end{document}